\newcommand\customticknum{3}   
\newcommand\customticklen{0.5em} 
\newcommand*{\mint}[1]{%
  \mint@l{#1}{}%
}
\newcommand*{\mint@l}[2]{%
  \@ifnextchar\limits{%
    \mint@l{#1}%
  }{%
    \@ifnextchar\nolimits{%
      \mint@l{#1}%
    }{%
      \@ifnextchar\displaylimits{%
        \mint@l{#1}%
      }{%
        \mint@s{#2}{#1}%
      }%
    }%
  }%
}
\newcommand*{\mint@s}[2]{%
  \@ifnextchar_{%
    \mint@sub{#1}{#2}%
  }{%
    \@ifnextchar^{%
      \mint@sup{#1}{#2}%
    }{%
      \mint@{#1}{#2}{}{}%
    }%
  }%
}
\def\mint@sub#1#2_#3{%
  \@ifnextchar^{%
    \mint@sub@sup{#1}{#2}{#3}%
  }{%
    \mint@{#1}{#2}{#3}{}%
  }%
}
\def\mint@sup#1#2^#3{%
  \@ifnextchar_{%
    \mint@sup@sub{#1}{#2}{#3}%
  }{%
    \mint@{#1}{#2}{}{#3}%
  }%
}
\def\mint@sub@sup#1#2#3^#4{%
  \mint@{#1}{#2}{#3}{#4}%
}
\def\mint@sup@sub#1#2#3_#4{%
  \mint@{#1}{#2}{#4}{#3}%
}
\newcommand*{\mint@}[4]{%
  \mathop{}%
  \mkern-\thinmuskip
  \mathchoice{%
    \mint@@{#1}{#2}{#3}{#4}%
        \displaystyle\textstyle\scriptstyle
  }{%
    \mint@@{#1}{#2}{#3}{#4}%
        \textstyle\scriptstyle\scriptstyle
  }{%
    \mint@@{#1}{#2}{#3}{#4}%
        \scriptstyle\scriptscriptstyle\scriptscriptstyle
  }{%
    \mint@@{#1}{#2}{#3}{#4}%
        \scriptscriptstyle\scriptscriptstyle\scriptscriptstyle
  }%
  \mkern-\thinmuskip
  \int#1%
  \ifx\\#3\\\else_{#3}\fi
  \ifx\\#4\\\else^{#4}\fi  
}
\newcommand*{\mint@@}[7]{%
  \begingroup
    \sbox0{$#5\int\m@th$}%
    \sbox2{$#5\int_{}\m@th$}%
    \dimen2=\wd0 %
    \let\mint@limits=#1\relax
    \ifx\mint@limits\relax
      \sbox4{$#5\int_{\kern1sp}^{\kern1sp}\m@th$}%
      \ifdim\wd4>\wd2 %
        \let\mint@limits=\nolimits
      \else
        \let\mint@limits=\limits
      \fi
    \fi
    \ifx\mint@limits\displaylimits
      \ifx#5\displaystyle
        \let\mint@limits=\limits
      \fi
    \fi
    \ifx\mint@limits\limits
      \sbox0{$#7#3\m@th$}%
      \sbox2{$#7#4\m@th$}%
      \ifdim\wd0>\dimen2 %
        \dimen2=\wd0 %
      \fi
      \ifdim\wd2>\dimen2 %
        \dimen2=\wd2 %
      \fi
    \fi
    \rlap{%
      $#5%
        \vcenter{%
          \hbox to\dimen2{%
            \hss
            $#6{#2}\m@th$%
            \hss
          }%
        }%
      $%
    }%
  \endgroup
}
\newcommand{\x}{\scalebox{1.2}{$\chi$} } 
\newcommand{\br}{\overline}
\newcommand{\R}{\mathbb R}
\newcommand{\C}{\mathbb C}
\newcommand{\D}{\mathbb D}
\newcommand{\Z}{\mathbb Z}
\newcommand{\N}{\mathbb N}
\newcommand{\p}{\mathbb P}
\theoremstyle{plain}
\newtheorem{theorem}{Theorem}
\newtheorem{lemma}[theorem]{Lemma}
\newtheorem{prop}[theorem]{Proposition}
\newtheorem{corollary}[theorem]{Corollary}
\newtheoremstyle{case}{}{}{}{}{}{:}{ }{}
\theoremstyle{case}
\theoremstyle{definition}
\newtheorem{definition}[theorem]{Definition}
\theoremstyle{remark}
\newtheorem{remark}[theorem]{Remark}
\newtheorem{question}[theorem]{Question}
\DeclareMathOperator{\dist}{\textup{\text{dist}}}
\DeclareMathOperator{\diam}{\textup{\text{diam}}}
\DeclareMathOperator{\inter}{\textup{\text{int}}}
\DeclareMathOperator{\length}{\textup{\text{length}}}
\DeclareMathOperator{\loc}{\textup{loc}}
\numberwithin{equation}{section}
\numberwithin{theorem}{section}
\begin{document}

\title{A removability theorem for Sobolev functions and Detour sets}
\author{Dimitrios Ntalampekos}
\thanks{The author was partially supported by NSF grant DMS-1506099}
\address{Institute for Mathematical Sciences\\ Stony Brook University, Stony Brook\\
NY 11794-3651, USA}
\email{dimitrios.ntalampekos@stonybrook.edu}
\subjclass[2010]{Primary: 46E35; Secondary: 30C65}
\date{\today}
\keywords{removability, Sobolev functions, H\"older domains, detour sets, Sierpi\'nski gasket}

\maketitle

\begin{abstract}
We study the removability of compact sets for continuous Sobolev functions. In particular, we focus on sets with infinitely many complementary components, called ``detour sets", which resemble the Sierpi\'nski gasket. The main theorem is that if $K\subset \R^n$ is a detour set and its complementary components are sufficiently regular, then $K$ is $W^{1,p}$-removable for $p>n$. Several examples and constructions of sets where the theorem applies are given, including the Sierpi\'nski gasket, Apollonian gaskets, and Julia sets.
\end{abstract}

\section{Introduction}

\subsection{Background}
In this paper we study the removability of sets for Sobolev functions in $\R^n$. The problem originally arises from the problem of removabilty of sets for (quasi)conformal maps.

\begin{definition}\label{Conformal removability}
A compact set $K\subset U\subset \R^2$ is \textit{(quasi)conformally removable inside the domain $U$} if any  homeomorphism of $U$, which is (quasi)conformal on $U\setminus K$, is (quasi)conformal on $U$. 
\end{definition}

(Quasi)conformal removability of sets is of particular interest in Complex Dynamics, since it provides a tool for upgrading a topological conjugacy between two dynamical systems to a (quasi)conformal conjugacy. Another application of removability results is in the problem \textit{conformal welding}. The relevant result is that if a Jordan curve $\partial \Omega$ is (quasi)conformally removable, then the welding map that arises from $\partial \Omega$ is unique, up to M\"obius transformations. However, the converse is not known, in case $\partial \Omega$ has zero area. We direct the reader to \cite{Yo}, and the references therein, for a comprehensive survey on the topic.

A stronger notion of removability is the notion of \textit{Sobolev $W^{1,2}$-removability}. Recall that for an open set $U\subset \R^n$ and $p\geq 1$ we say that a function $f$ lies in the Sobolev space $W^{1,p}(U)$ if $f\in L^p(U)$ and there exist functions $\partial_i f\in L^p(U)$, $i=1,\dots,n$, such that for every smooth function $\phi\colon \R^n\to \R$ with compact support in $U$, we have
\begin{align}\label{Intro Definition Sobolev}
\int_U \partial_i f \cdot  \phi=- \int_U f\cdot \partial_i \phi  
\end{align}
for all $i=1,\dots,n$, where $\partial_i\phi= \frac{\partial\phi}{\partial x_i}$ denote the partial derivatives of $\phi$ in the coordinate directions of $\R^n$. We give a preliminary definition of $W^{1,p}$-removability in $\R^n$:

\begin{flushleft}
\textit{We say that a compact set $K\subset U \subset \R^n$ is \textit{$W^{1,p}$-removable inside a domain $U$}, if any function that is continuous in $U$ and lies in $W^{1,p}(U\setminus K)$, also lies in $W^{1,p}(U)$. In other words, $W^{1,p}(U\setminus K)\cap C^0(U)= W^{1,p}(U)\cap C^0(U)$ as sets.}
\end{flushleft}
Here, $C^0(U)$ denotes the space of continuous functions on $U$. It is immediate to check from the definition that removability is a local property, namely $K$ is $W^{1,p}$-removable inside $U$ if and only if for each $x\in K$ there exists $r>0$ such that $W^{1,p}(B(x,r)\setminus K) \cap C^0(B(x,r))= W^{1,p}(B(x,r))\cap C^0(B(x,r))$. Indeed, for the sufficiency of the latter condition one can argue using a partition of unity. For the necessity let $B(x,r)\subset U$ and note that if $u\in W^{1,p}(B(x,r)\setminus K) \cap C^0(B(x,r))$ and $\phi$ is smooth with compact support inside $B(x,r)$, then we may consider a smooth function $\psi$ that is compactly supported in $B(x,r)$ and is identically equal to $1$ on the support of $\phi$. Since $u\cdot \psi\in W^{1,p}(U\setminus K)\cap C^0(U)= W^{1,p}(U)\cap C^0(U)$, we then have $\int \partial_i u \cdot \phi= \int \partial_i (u\psi)\cdot \phi= - \int  u\psi \partial_i \phi = -\int u \partial_i \phi$. This shows that $u\in W^{1,p}(B(x,r))\cap C^0(B(x,r))$.

Hence, our definition of removability can be simplified to the following:

\begin{definition}\label{Intro Def Removability}
Let $p\geq 1$. We say that a compact set $K\subset \R^n$ is $W^{1,p}$-removable if any function that is continuous in $\R^n$ and lies in $W^{1,p}(\R^n\setminus K)$, also lies in $W^{1,p}(\R^n)$. In other words,
\begin{align*}
W^{1,p}(\R^n\setminus K) \cap C^0(\R^n)= W^{1,p}(\R^n)\cap C^0(\R^n).
\end{align*}
\end{definition}

We remark that a similar simplification is also possible for the problem of (quasi)con\-formal removability in the plane. In the definition we are looking at \textit{continuous} functions on all of $\R^n$. It is not known whether $W^{1,2}$-removability in the plane and (quasi)conformal removability are equivalent. However, so far, the techniques for proving removability of a set in the two distinct settings do not differ.  

A similar problem has been studied by Koskela \cite{Ko}, but the definition of a removable set is slightly different, since no continuity is assumed for the functions; thus, it is a stronger notion of removability. In fact, in our case, the continuity will be crucially used in the proofs.

It is almost immediate from the definition that $W^{1,p}$-removability of a set $K$ of measure zero implies $W^{1,q}$-removability for $q>p$. Indeed, this follows immediately from H\"older's inequality, and the locality of the definition of a Sobolev function.

It can be shown that a set $K\subset \R^n$ having $\sigma$-finite Hausdorff $(n-1)$-measure is  $W^{1,p}$-removable for all $p\geq 1$. The proof is rather a modification of the classical proof that such sets are removable for quasiconformal maps in $\R^n$; see \cite[Section 35]{Va}.

If a compact set $K\subset \R^2$ has area zero and is non-removable for (quasi)conformal maps, then (by definition) there exists a homeomorphism $f\colon \R^2\to \R^2$ that is (quasi)conformal in $\R^2 \setminus K$, but it is not (quasi)conformal on $\R^2$. In particular, $f\in W^{1,2}(B(0,R) \setminus K)$, but $f\notin W^{1,2}(B(0,R))$ for some large ball $B(0,R)\supset K$. This implies that $K$ is non-removable for $W^{1,2}$, and, thus, non-removable for $W^{1,p}$, $1\leq p\leq 2$, by our earlier remarks. Hence, there is a pool of examples of non-removable sets for Sobolev functions, including all sets of area zero that are non-removable for (quasi)conformal maps. We cite the \textit{flexible curves} of Bishop \cite{Bi}, and the non-removable \textit{graphs} of Kaufman \cite{Kau}.

\begin{question}Are there sets $K\subset \R^n$ which are $W^{1,p}$-removable for some exponents $p$, and non-removable for others?
\end{question}

The answer to this question is yes. In \cite{KRZ} an example is given of a Cantor set $E\subset \R^2$ that is  $W^{1,q}$-removable for some exponents $1<q<\infty$, and non-removable for other exponents. The particular statement is the following \cite[Lemma 4.4]{KRZ}: For each $p>2$ there exists a Cantor set $E\subset \R^2$ of area zero that is $W^{1,q}$-removable for $q>p$, but it is non-removable for $1\leq q\leq p$. We remark that in the proof of the second part, the authors construct a function $u\in W^{1,q} (\R^2\setminus E)$ that does not lie in $W^{1,q}(\R^2)$. In fact, the function that they construct is continuous on $\R^2$, and this shows the non-removability in our sense.

Another natural question is the following:
\begin{question}\label{Question positive area} If $K\subset \R^n$ has positive Lebesgue measure, then is it true that it is non-remov\-able for \textit{all} spaces $W^{1,p}$, $1\leq p <\infty$ ? 
\end{question}
The answer is trivially yes if $K$ has non-empty interior, since we can consider a continuous function with no partial derivatives, compactly supported in an open ball in $\inter(K)$. However, we have not been able to locate an answer to this question in the literature.

Some fairly general classes of sets were proved to be removable in \cite{Jo} and \cite{JS}, and the results of the latter were generalized in \cite{KN} in the planar case. In \cite{Jo} it is proved that boundaries of planar \textit{John domains} are $W^{1,2}$-removable. In the subsequent paper \cite{JS} this result was improved, and certain \textit{quasihyperbolic boundary conditions} on an open set $\Omega\subset \R^n$ were found to imply $W^{1,n}$-removability for the boundary $\partial \Omega$; see \cite[Theorem 1]{JS}. Their precise condition is the condition in the conclusion of our Lemma \ref{Shadow-QH distance}. Also, one obtains the $W^{1,n}$-removability conclusion for the boundary of the union of \textit{finitely many} open sets $\Omega\subset \R^n$ if they satisfy the quasihyperbolic boundary condition; see  \cite[p. 265]{JS}. 

However, so far, no general removability result was known for sets $K\subset \R^n$ whose complement has \textit{infinitely many}  components. One classical example of a non-removable such set is the \textit{standard Sierpi\'nski carpet} $S$; see Figure \ref{fig:carpet-Apollonian}. This is constructed as follows. We subdivide the unit square of $\R^2$ into nine squares of sidelength $1/3$, and then remove the middle open square. In the next step, we subdivide each of the remaining eight squares into nine squares of sidelength $1/9$, and remove the middle squares. Then one proceeds inductively, and the remaining compact set $S$ is the standard Sierpin\'ski carpet. Note that $S$ contains a copy of $K=C\times [0,1]$, where $C$ denotes the middle-thirds Cantor set. Let $h\colon \R\to \R$ be the Cantor staircase function and let $\psi\colon \R\to [0,1]$ be a smooth function with $\psi\equiv 0$ outside $[0,1]$, and $\psi\equiv 1$ in $[1/9, 8/9]$. Then $f_S(x,y)\coloneqq x+ h(x)\psi(y)$ is a continuous function that lies in $W^{1,p}(U\setminus S)$ for all $p\geq 1$, but not in $W^{1,p}(U)$, where $U$ is a bounded open set containing $S$. In fact, the Sierpi\'nski carpet is not either quasiconformally removable since  the map $(x,y)\mapsto (x+h(x)\psi(y),y)$ is a homeomorphism of $\R^2$ that is quasiconformal in $\R^2\setminus S$, but not in $\R^2$.

On the other hand, prior to this work, there has been no conclusion regarding the removability of the \textit{Sierpi\'nski gasket}; Figure \ref{fig:gasket}. To construct the latter, we subdivide an equilateral triangle of sidelength $1$ into four equilateral triangles of sidelength $1/2$, and then remove the middle open triangle. In the next step we subdivide each of the remaining three triangles into four equilateral triangles of sidelength $1/4$, and then remove the middle triangles. After proceeding inductively, the remaining compact set $K$ is the Sierpi\'nski gasket.  The difference with the Sierp\'inski carpet, which gives some ``hope" towards proving removability, is that the closures of some  complementary components  of the gasket meet each other, in contrast to the complementary components of the carpet, which are all disjoint squares. 

The (quasi)conformal removability of sets that resemble the Sierpi\'nski gasket would be of great interest, not only in Complex Dynamics, but also in the theory of the \textit{Schramm-Loewner Evolution}, since it would provide insight to a question raised by Scott Sheffield regarding the removability of the trace of SLE$_\kappa$ for $\kappa\in (4,8)$; see \cite{Sh}.

\subsection{Main result}
\begin{figure}
	\centering
	\input{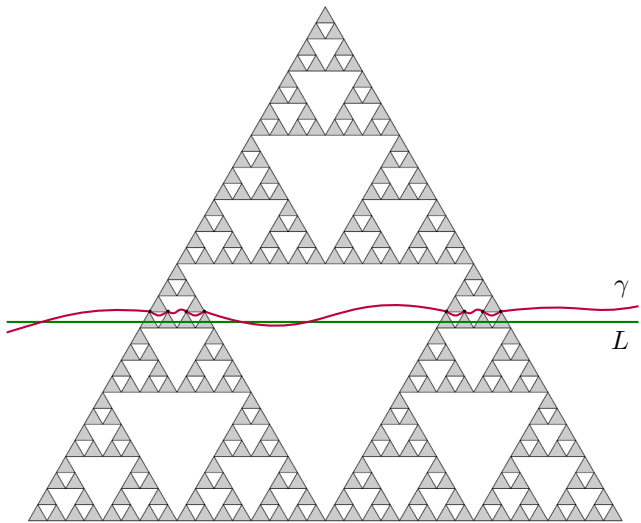}	
	\caption{The Sierpi\'nski gasket, and a detour path $\gamma$ near the line $L$.} \label{fig:gasket}
\end{figure}

In this work we focus on proving a removability result for sets resembling the Sierpi\'nski gasket. The property of the gasket $K$ that we are interested in is the following:
\begin{quote}
\textit{For ``almost every" line $L$ intersecting $K$, and for every $\varepsilon>0$ there exists a ``detour path" $\gamma$ that $\varepsilon$-follows the line $L$, but intersects only finitely many complementary components of $K$.}
\end{quote}
In other words, we can ``travel" in the direction of the line $L$ using only finitely many components in the complement of $K$, but still staying arbitrarily close to the line $L$; see Figure \ref{fig:gasket}. We call such sets \textit{detour sets}, and their precise definition is given in Section \ref{Section Detour}, where an extra technical condition is added.

In order to formulate the theorem, we also need to impose some regularity on the boundaries of the complementary components of $K$; this is necessary, since there are non-removable Jordan curves. A fairly general condition that is easy to check in applications is the requirement that the complementary components of $K$ are  \textit{uniform H\"older domains}. 
 
A simply connected planar domain $\Omega$ is a H\"older domain if there exists a conformal map from the unit disk $\D$ onto $\Omega$ that extends to be H\"older continuous on $\br \D$. Note that there are some implicit constants contained in the statement. The general definition of a H\"older domain in $\R^n$ is given in Section \ref{Section Holder}. The complementary components of a compact set $K\subset \R^n$ are \textit{uniform} H\"older domains if they are H\"older domains and the implicit constants are the same for all of them; cf. Definition \ref{Main Holder Detour}. We now state the Main Theorem:

\begin{theorem}\label{Intro main theorem}
Let $K\subset \R^n$ be a detour set whose complementary components are uniform H\"older domains. Then $K$ is $W^{1,p}$-removable for $p>n$.
\end{theorem}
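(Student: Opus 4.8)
The plan is to prove membership in $W^{1,p}(\R^{n})$ via the \emph{ACL} criterion. Let $f\in C^{0}(\R^{n})\cap W^{1,p}(\R^{n}\setminus K)$ with $p>n$, and let $\{\Omega_{j}\}_{j}$ be the complementary components of $K$. By the locality of removability we may work inside a fixed ball. A first step is to record that $|K|=0$, which I would extract from the H\"older hypothesis together with the technical part of the definition of a detour set; then $g:=\nabla f\cdot\1_{\R^{n}\setminus K}\in L^{p}(\R^{n})$ and $f\in L^{p}(\R^{n})$ by continuity, so it remains to show that $g$ is the distributional gradient of $f$ on $\R^{n}$. Equivalently, it suffices to prove that for each coordinate direction $e_{i}$ and almost every line $L$ parallel to $e_{i}$ the restriction $f|_{L}$ is absolutely continuous on compact subintervals with $(f|_{L})'=\partial_{i}f\cdot\1_{\R^{n}\setminus K}$ a.e. By Fubini's theorem and the definition of a detour set, the set of lines that are \emph{admissible} in the sense that (i) $f|_{L\setminus K}$ is absolutely continuous on each complementary interval of $L\setminus K$, (ii) $\int_{L}|\nabla f|^{p}\,\1_{\R^{n}\setminus K}<\infty$, (iii) $|L\cap K|=0$, and (iv) the detour property holds at $L$, has full measure in each direction; I would fix such an $L$, say parallel to $e_{1}$, for the rest of the argument.

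The single-domain ingredient is the regularity of $f$ on each $\Omega_{j}$. Since $\Omega_{j}$ is a H\"older domain, Lemma~\ref{Shadow-QH distance} supplies it with the John--Smith quasihyperbolic boundary condition, with constants \emph{independent of $j$}; combined with the Sobolev embedding $W^{1,p}\hookrightarrow C^{0,1-n/p}$ for $p>n$ and a quasihyperbolic oscillation estimate, this yields uniform constants $C_{0},\alpha$ such that $f$ extends continuously to $\overline{\Omega_{j}}$ and, for every $x\in\partial\Omega_{j}$ and $0<r<\diam\Omega_{j}$,
\begin{equation*}
\osc_{\,\overline{\Omega_{j}}\cap B(x,r)}f\ \le\ C_{0}\,r^{\alpha}\Big(\int_{\Omega_{j}\cap B(x,C_{0}r)}|\nabla f|^{p}\Big)^{1/p}.
\end{equation*}
Two facts I would take away: first, for each fixed $j$ this oscillation tends to $0$ as $r\to0$ (continuity up to the boundary); second, since $p>n$, $f$ is absolutely continuous along $p$-almost every rectifiable curve in $\Omega_{j}$, with increment equal to the line integral of $g$.

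The core of the proof is to show, for $a<b$ on $L$ chosen outside $K$ (harmless by density and continuity), that $f(b)-f(a)=\int_{a}^{b}\partial_{1}f\cdot\1_{\R^{n}\setminus K}\,dt$. Given $\varepsilon>0$, invoke the detour property to obtain a path $\gamma_{\varepsilon}$ that $\varepsilon$-follows $[a,b]$, with endpoints $p_{\varepsilon}\in B(a,\varepsilon)$, $q_{\varepsilon}\in B(b,\varepsilon)$, meeting only finitely many components $\Omega_{j_{1}},\dots,\Omega_{j_{N(\varepsilon)}}$, and — using the additional structure in the definition of a detour set — arranged as a finite concatenation $\gamma_{1}*\cdots*\gamma_{N(\varepsilon)}$ with $\gamma_{i}\subset\overline{\Omega_{j_{i}}}$, consecutive arcs meeting only at points of $K$, each $\gamma_{i}$ a good curve for $f|_{\Omega_{j_{i}}}$. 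Telescoping at the junction points and using continuity of $f$ there,
\begin{equation*}
f(q_{\varepsilon})-f(p_{\varepsilon})=\sum_{i=1}^{N(\varepsilon)}\big(f(\gamma_{i}(1))-f(\gamma_{i}(0))\big)=\sum_{i=1}^{N(\varepsilon)}\int_{\gamma_{i}}g\cdot d\vec{s},
\end{equation*}
while $f(q_{\varepsilon})-f(p_{\varepsilon})\to f(b)-f(a)$ by continuity. One then compares this sum with $\int_{a}^{b}\partial_{1}f\cdot\1_{\R^{n}\setminus K}\,dt$, which by admissibility of $L$ equals the absolutely convergent sum $\sum_{m}\big(f(r_{m})-f(l_{m})\big)$ over the complementary intervals $I_{m}=(l_{m},r_{m})$ of $[a,b]\setminus K$; each $I_{m}$ lies in a single $\Omega_{j(m)}$, and $\gamma_{\varepsilon}$ enters and exits $\Omega_{j(m)}$ within $\varepsilon$ of $l_{m},r_{m}\in\partial\Omega_{j(m)}$. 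Matching contributions component by component, the discrepancy is bounded by a sum of oscillations of $f$ over $\varepsilon$-small pieces of the $\overline{\Omega_{j_{i}}}$ near the relevant boundary points, plus the oscillations over those $\Omega_{j_{i}}$ that $\gamma_{\varepsilon}$ visits only laterally (without meeting $[a,b]$), which shrink into $L$ as $\varepsilon\to0$.

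The main obstacle is precisely this last comparison: one must show the total discrepancy tends to $0$ as $\varepsilon\to0$, even though the number $N(\varepsilon)$ of components threaded by the detour path blows up. A naive application of H\"older's inequality to the sum of $\varepsilon^{\alpha}\big(\int_{\Omega_{j_{i}}\cap B}|\nabla f|^{p}\big)^{1/p}$ requires controlling $N(\varepsilon)$ against a negative power of $\varepsilon$, which is too crude near $p=n$; making the estimate work for \emph{all} $p>n$ is where the extra technical condition in the definition of a detour set is indispensable, as it furnishes exactly the geometric and summability input — compatible with the uniform H\"older estimate and with $p>n$ — needed to force the error to vanish. Conceptually, this step rules out a Cantor-staircase-type increment of $f|_{L}$ concentrated on the null set $L\cap K$, the very mechanism that makes the Sierpi\'nski carpet non-removable, and the combination of detour paths with uniform boundary regularity is what forbids it. Once this identity is established for a dense set of pairs $a<b$, continuity upgrades it to all pairs, $f|_{L}$ is absolutely continuous with the stated derivative, and the ACL criterion gives $f\in W^{1,p}(\R^{n})$.
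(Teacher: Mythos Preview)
Your outline has the right skeleton (ACL criterion, zero measure of $K$, detour paths, uniform boundary oscillation estimates), but the step you yourself flag as the ``main obstacle'' is exactly where the argument is incomplete, and the mechanism you gesture at is not the one that actually works. First, the identity $f(\gamma_i(1))-f(\gamma_i(0))=\int_{\gamma_i} g\cdot d\vec s$ is neither justified nor needed: the detour path is handed to you by Definition~\ref{Definition Detour set} and cannot be perturbed into a ``good curve'' without destroying conditions (i)--(iii); more to the point, line integrals along $\gamma$ carry no information about $\int_a^b\partial_1 f$ beyond what continuity and the telescoping already give. Second, your ``matching component by component'' breaks down because condition (iii) is an \emph{inclusion}, not an equality: $\gamma$ may skip components that $L$ meets, and the correspondence between arcs $\gamma_i$ and intervals $I_m$ is not one-to-one. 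Third, you omit entirely the contribution of $L\cap\partial\Omega_k$; this is not free and requires the Jones--Smirnov argument (Proposition~\ref{Sobolev Holder boundary}) to show $m_1(f(L\cap\partial\Omega_k))=0$ for a.e.\ $L$.

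The paper's proof resolves the obstacle by a different route. It does not compute increments of $f$ along $\gamma$; instead, for a finite $Z\ni 0$ it bounds $m_1\bigl(f(L\cap K\setminus\bigcup_{k\in Z}\overline{\Omega_k})\bigr)$ by $C\sum_{k\notin Z,\ \overline{\Omega_k}\cap L\ne\emptyset}\diam(f(\partial\Omega_k))$, using the detour path only to chain the gap intervals through finitely many $\overline{\Omega_k}$. The crucial move you are missing is then to \emph{integrate over all parallel lines before letting $Z\nearrow\N$}: condition (iii) is what makes this integration tractable via Fubini, producing the factor $\diam(\Omega_k)^{n-1}$, after which Proposition~\ref{Sobolev Holder estimate} (the global estimate $\diam(f(\partial\Omega_k))\lesssim\diam(\Omega_k)^{1-n/p}\bigl(\int_{\Omega_k}|\nabla f|^p\bigr)^{1/p}$, not your localized one) and H\"older's inequality give a tail bounded by $\bigl(\sum_{k\notin Z} m_n(\Omega_k)\bigr)^{1/p'}\|\nabla f\|_p\to 0$. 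Your line-by-line scheme cannot access this summability; that is why the ``technical condition'' feels indispensable to you but you cannot say how it is used.
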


The method used in the proof relies partly on the result of Jones and Smirnov in \cite{JS}, and partly on the detour property. The first result tells us that the boundaries of the complementary components of $K$ can be essentially ignored, since they are already $W^{1,p}$-removable for $p\geq n$. The detour property is used to deal with the ``hidden" parts of $K$ that do not lie on the boundary of any complementary component. Our proof in this part is what imposes the assumption $p>n$. Unfortunately there seems to be no immediate generalization of the proof which would yield $W^{1,n}$-removability, and thus (quasi)conformal removability; nevertheless, see the \textit{Update} below, which shows that this is not expected.

However, if $K$ is the Sierpi\'nski gasket, this theorem implies the following: in case there exists an example of a globally continuous function $f\in W^{1,2}(\R^2\setminus K)$ which does not lie in $W^{1,2}(\R^2)$, then one necessarily has 
\begin{align*}
 \int |\nabla f |^p =\infty
\end{align*}
for \textit{all} $p>2$. See also the remarks in Section \ref{Section Concluding}. This is another difference between the gasket and the carpet $S$. Indeed, by modifying the function $f_S$ that we gave earlier, one may obtain a globally continuous function $f\in W^{1,2}(\R^2\setminus S)$, with $\int |\nabla f|^p<\infty$ for all $p\geq 1$, but $f\notin W^{1,p}(\R^2)$.

\begin{figure}
\centering
		\begin{tikzpicture}
			\node[] (carpet) at (0,0)
				{\includegraphics[width=.45\textwidth]{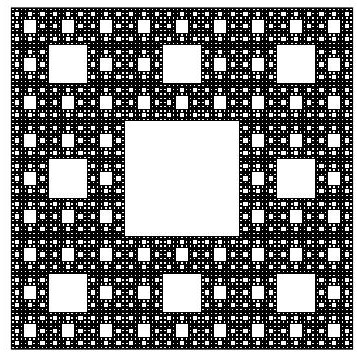}};
			\node[] (apollonian) at (6,0)
				{\includegraphics[width=.45\textwidth]{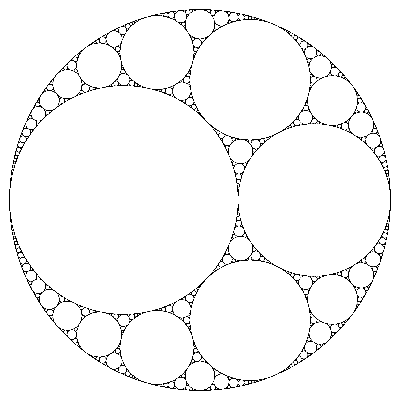}};
		\end{tikzpicture}
		\caption{The standard Sierpi\'nski carpet (left) and an Apollonian gasket (right). \label{fig:carpet-Apollonian}}
\end{figure}

It is remarkable that the detour property, as well as the uniform H\"older domain property, are easy to check in several circumstances, thus yielding:

\begin{corollary}\label{Intro Corollary}
The following planar sets are $W^{1,p}$-removable for $p>2$:
\begin{enumerate}[\upshape(i)]
\item the Sierpi\'nski gasket,
\item Apollonian gaskets,
\item generalized Sierpi\'nski gasket Julia sets of postcritically finite rational maps (under some combinatorial assumptions).
\end{enumerate}
\end{corollary}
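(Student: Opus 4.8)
The plan is to reduce each of the three cases to a verification of the two hypotheses of Theorem~\ref{Intro main theorem}: that the set is a detour set in the sense of Section~\ref{Section Detour}, and that its complementary components are uniform H\"older domains. Since the removability conclusion then follows immediately from the Main Theorem, the entire work of the corollary is geometric bookkeeping about the three families of fractals, carried out one family at a time.

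\textbf{The Sierpi\'nski gasket.} For the detour property, I would fix a line $L$ that is (say) not parallel to any of the three sides of the generating triangle and meets the gasket $K$, and construct the detour path $\gamma$ by an iterative ``hopping'' scheme: at scale $2^{-k}$ the line $L$ passes through a chain of triangles of the $k$-th subdivision; consecutive such triangles in the chain share a vertex (this is the key feature distinguishing the gasket from the carpet), so one can route $\gamma$ from one small triangle to the next through the removed middle open triangle adjacent to their common vertex, each hop moving $\gamma$ by at most $O(2^{-k})$ off $L$. Taking $k$ large with $2^{-k}<\varepsilon$ produces a path that $\varepsilon$-follows $L$ and enters only the finitely many middle triangles corresponding to the transitions in a bounded portion of the chain. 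One must also check the exceptional set of lines (those hitting a vertex of some subdivision triangle, or parallel to a side) is negligible in whatever sense Section~\ref{Section Detour} requires, and verify the extra technical condition alluded to there. For the H\"older condition, each complementary component of $K$ is a Euclidean equilateral triangle, hence a Lipschitz (indeed convex) domain, and all such triangles are similar with a common bi-Lipschitz model, so the conformal maps from $\D$ are uniformly H\"older; this case is essentially trivial.

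\textbf{Apollonian gaskets and postcritically finite Julia sets.} These two cases follow the same template but require more care. For an Apollonian gasket, the complementary components are round disks; round disks are uniform H\"older domains with the same constants (the uniformizing maps are M\"obius), so that hypothesis is immediate. The detour property requires showing that, along almost every line, one can hop between adjacent curvilinear triangular pieces of the gasket through tangent disks — here I would exploit the self-similarity/quasi-self-similarity of the Apollonian packing under the associated Kleinian group, together with the fact that the residual set has empty interior and the gaps are tangent in a way that permits short hops, mirroring the gasket argument. For the generalized Sierpi\'nski gasket Julia sets of postcritically finite rational maps, I would invoke the known combinatorial/topological model (the Julia set carries a finite subdivision structure in the sense of Cannon--Floyd--Parry/Bonk--Meyer, with complementary Fatou components playing the role of the removed triangles) to transport both properties: the Fatou components are uniformly quasi-round (by expansion and the absence of critical points in the Julia set in the hyperbolic-on-the-Julia-set sense) hence uniform H\"older domains, and the finite subdivision rule gives, at every scale, chains of pieces whose consecutive members share boundary points, which is exactly what the detour construction needs.

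\textbf{Main obstacle.} The routine part is the H\"older verification — for (i) and (ii) the components are genuinely round, and for (iii) it reduces to a standard expansion estimate. The real work, and the main obstacle, is establishing the detour property precisely as defined in Section~\ref{Section Detour}, including the ``extra technical condition'' and the correct notion of ``almost every line'': one must produce, uniformly over the relevant lines and over all $\varepsilon$, detour paths that both stay $\varepsilon$-close to $L$ and touch only finitely many complementary components, and one must argue that the set of bad lines is exceptional. For the gasket this is a clean combinatorial argument about chains of subtriangles; for Apollonian gaskets and Julia sets it requires genuinely using the group-theoretic self-similarity, respectively the finite subdivision rule and expansion, to get scale-invariant control of how the complementary pieces fit together along lines. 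I expect that the bulk of the proof of the corollary will be spent setting up these combinatorial adjacency facts in each of the three settings and checking that the hop sizes are summable to something $<\varepsilon$.
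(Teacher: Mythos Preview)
Your overall plan --- verify the detour and uniform-H\"older hypotheses of Theorem~\ref{Intro main theorem} separately for each family --- is exactly right, and your H\"older verifications are fine (triangles, disks, and for the Julia sets the Fatou components are in fact uniform \emph{John} domains by Mihalache's result for sub-hyperbolic rational maps, which is stronger than H\"older). Where your route diverges from the paper is in the detour verification. You propose building the detour path by hand in each case: a combinatorial hop-through-vertices argument for the gasket, Kleinian-group self-similarity for the Apollonian case, and the Cannon--Floyd--Parry/Bonk--Meyer subdivision machinery for the Julia sets. This is viable but considerably heavier than what is needed.

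The paper instead proves two general ``construction'' propositions (Propositions~\ref{Example Construction} and~\ref{Example Construction2}) which reduce the detour property to a single easily-checked condition: that in the level-$m$ approximation $K_m\supset K$, either the residual pieces $V_{i,m}=\text{components of }\inter(K_m)$ or the removed pieces $\Omega_{j,m}$ have diameters shrinking to $0$ uniformly in $i$ (resp.\ $j$). Once this is in place, all three verifications become one-liners: for the gasket the $V_{i,m}$ are triangles of sidelength $2^{-m}$; for the Apollonian gasket the removed disks have radii in $\ell^2$ by an area bound; for the Julia sets $K_m=f^{-m}(K_0)$ and the shrinking is already in the literature (\cite[Prop.~3.6]{DRS}). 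The detour path itself is then simply: follow $L$ outside $\inter(K_m)$, and where $L$ crosses a piece $V_{i,m}$ replace that segment by an arc of $\partial V_{i,m}$ (which is small and lies in finitely many $\partial\Omega_k$). This automatically handles the technical condition~(iii) of Definition~\ref{Definition Detour set} that you flag as an obstacle, with no combinatorics about chains of adjacent pieces and no summation of hop sizes. Your approach would ultimately reprove these propositions inside each example; the paper's abstraction buys a much shorter and more uniform argument, and in particular avoids any appeal to the Apollonian group or to expanding-map subdivision rules.
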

Definitions of the latter two classes of sets are given in Section \ref{Section Examples}. We also include there a general construction of detour sets in the plane.

\subsection*{Update}
Since the completion and distribution of the first version of this paper, there has been some further progress. The current author in \cite{Nta} has proved that the Sierpi\'nski gasket is non-removable for (quasi)conformal maps and for $W^{1,p}$ functions, for $1\leq p\leq 2$. Combined with Corollary \ref{Intro Corollary} above, this shows that the gasket is $W^{1,p}$-removable if and only if $p>2$, so the result of the current paper is sharp. Moreover, it is proved in \cite{Nta} that not only the Sierpi\'nski gasket, but also all homeomorphic copies of the gasket are non-removable for $W^{1,p}$ functions, for $1\leq p\leq 2$; note that these are also detour sets, by Remark \ref{Example Homeo invariance} in the current paper. Furthermore, in the same paper a positive answer to Question \ref{Question positive area} is given.

Another related result that has been recently proved by the current author in \cite{NtaCarpet} is that all Sierpi\'nski carpets, i.e., homeomorphic copies of the standard Sierpi\'nski carpet, are non-removable for (quasi)conformal maps, and thus for $W^{1,p}$ functions, for $1\leq p\leq 2$. For the higher-dimensional version of this result, see \cite{NtaWu}.

\subsection{Organization of the paper}
	
In Section \ref{Section Notation} we introduce our notations and terminology. 

In Section \ref{Section Detour} the detailed definition of detour sets is given and several properties are established for these sets. In particular, in the planar case, the topology allows us to determine the geometry of detour sets very well; see Proposition \ref{Detour dimension 2}. We also introduce conditions in Proposition \ref{Detour Zero measure}, which ensure that a detour set has measure zero in $\R^n$.

Section \ref{Section Holder} includes the definition and several properties of H\"older domains in $\R^n$. We reprove some of these properties, since we are interested in the dependence of some inequalities on the constants related to a H\"older domain. 

In Section \ref{Section Sobolev} we prove some classical inequalities for Sobolev functions, whose domain is a H\"older domain. Again, we are interested in the dependence on the implicit constants, especially in Proposition \ref{Sobolev Holder estimate}. We also  state here a variant, or rather a special case, of \cite[Theorem 1]{JS}, including a proof for the sake of completeness; see Proposition \ref{Sobolev Holder boundary}.

Section \ref{Section Main Theorem} is occupied by the proof of Theorem \ref{Intro main theorem}.

Section \ref{Section Examples} is divided in two parts. In the first part we give two general constructions of detour sets, and in the second part we prove Corollary \ref{Intro Corollary}, including also the definition of Apollonian gaskets and generalized Sierpi\'nski gasket Julia sets.

Finally, in Section \ref{Section Concluding} we discuss how our results could give some insight towards proving the (non-)removability of the Sierpi\'nski gasket.

\subsection*{Acknowledgments}
The author would like to thank Mario Bonk for introducing him to the problem of removability, and for several fruitful discussions and explanations on the background of the problem and the proofs of previous results. Additional thanks go to Huy Tran for pointing out the connection of the problem to SLE, to Ville Tengvall for pointing out the reference \cite{KRZ}, and to Pekka Koskela for a motivating discussion. The author is also grateful to Vasiliki Evdoridou, Malik Younsi, and the anonymous referee for their comments and corrections.

This paper was written while the author was visiting University of Helsinki. He thanks the faculty and staff of the Department of Mathematics at the University of Helsinki for their hospitality.

\section{Notation and Terminology}\label{Section Notation}
A \textit{curve} or \textit{path} $\gamma$ is a continuous function $\gamma\colon I\to \R^n$, where $I\subset \R$ is an interval. We will also denote by $\gamma \subset \R^n$ the trace of the curve $\gamma$, i.e., the set $\gamma(I)$. An \textit{open path} $\gamma$ is a continuous function $\gamma\colon (0,1)\to \R^n$ that is the restriction of a path $\br \gamma\colon [0,1]\to\R^n$. In other words, an open path has endpoints. A \textit{simple} path is an injective path. 

A \textit{Jordan region} $\Omega \subset \R^2$ is an open set whose boundary $\partial \Omega$ is a Jordan curve. We call $\Omega \subset \R^2$ an \textit{unbounded Jordan region}, if $\partial \Omega$ is a Jordan curve but $\Omega$ is the unbounded component of $\R^2 \setminus \partial \Omega$. 

All distances will be with respect to the Euclidean distance of $\R^n$. We use the standard open ball notation $B(x,r)=\{y\in \R^n: |x-y|<r\}$, and we use the notation $\br B(x,r)$ for the closed ball.

We denote by $\widehat{\C}$ the Riemann sphere with the standard topology. 

The $n$-dimensional Lebesgue measure is denoted by $m_n$. If $f$ is an integrable function on $\R^n$ we will write $\int f$ for its integral. Only in certain situations, to avoid confusion, we will write instead $\int f(x)\, dm_n(x)$. 

We use the notation $a\lesssim b$ if there exists an implicit constant $C>0$ such that $a\leq C b$, and $a\simeq b$ if there exists a constant $C>0$ such that $C^{-1}b\leq a\leq Cb$. We will be mentioning  the parameters on which the implicit constant $C$ depends, if they are not universal constants.

\section{Detour sets}\label{Section Detour}

\begin{definition}\label{Definition Detour set}
Let $K\subset \R^n$, $n\geq 2$, be a compact set and assume that it has infinitely many complementary components, denoted by $\bigcup_{k=0}^\infty \Omega_k =\R^n\setminus K$. The set $K$ is a \textit{detour set} if the following holds: 
\newline
There exist $n$ linearly independent directions such that almost every line $L$, parallel to one of these directions, has the property that for all $\varepsilon>0$ there exists a path $\gamma$ lying  $\varepsilon$-close  to the line $L$ in the Hausdorff sense, such that:
\begin{enumerate}[(i)]
\item $\gamma \subset \bigcup_{k=0}^\infty \br \Omega_k$,
\item there is a finite set $\Gamma \subset \{k: \gamma \cap \br \Omega_k\neq \emptyset\}$ such that $\gamma \subset \bigcup_{k\in \Gamma} \br \Omega_k$, and
\item $\Gamma \subset \{k: L \cap \br \Omega_k\neq \emptyset\}$.
\end{enumerate}  
We say that a line $L$ satisfying the above has the \textit{detour property}, and a path $\gamma$ as above is called a \textit{detour path}.
\end{definition}

In other words, we require that a detour path $\gamma$ $\varepsilon$-follow the line $L$ and be contained in the union of only finitely many components $\br \Omega_k$, all of which are intersected by $L$. 

In what follows $\Omega_0$ will always denote the unbounded component of $\R^n\setminus K$, and $\Omega_k$, $k\geq 1$, will be the bounded components.

We record an easy observation in the next lemma.

\begin{lemma}\label{Detour empty interior}
Let $K\subset \R^n$ be a detour set. Then $\inter(K)=\emptyset$.
\end{lemma}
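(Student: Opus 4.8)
The plan is to argue by contradiction. Suppose $\inter(K)\neq\emptyset$, so there is a point $x_0\in K$ and a radius $r>0$ with $\br B(x_0,r)\subset K$. The idea is that the detour property forces a path through the complementary components $\br\Omega_k$ to pass arbitrarily close to $x_0$ in the direction of a suitable line, but such a path cannot enter $\br B(x_0,r)$ at all, since $B(x_0,r)\subset K$ is disjoint from every $\Omega_k$ and hence from every $\br\Omega_k$ except possibly along $\partial\Omega_k\cap\partial B$-type boundary contact — and even that contact is ruled out because an interior point of $K$ has a whole neighborhood in $K$, so no $\partial\Omega_k$ can come within distance $r/2$ of $x_0$.

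Concretely, first I would fix one of the $n$ linearly independent directions from the definition of a detour set, and observe that the set of lines $L$ parallel to that direction which both have the detour property and pass through $B(x_0,r/2)$ has positive measure (the full-measure condition intersected with the positive-measure slab of lines hitting a ball). In particular we can choose such a line $L$ with $\dist(L,x_0)<r/2$, so that a segment of $L$ of length comparable to $r$ lies inside $B(x_0,r)\subset K$. Then apply the detour property with $\varepsilon = r/4$: there is a detour path $\gamma$ with $\gamma\subset\bigcup_k\br\Omega_k$ lying within Hausdorff distance $r/4$ of $L$. Since part of $L$ sits well inside $B(x_0,r)$, the corresponding part of $\gamma$ must lie within $B(x_0,3r/4)$, say.

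The contradiction then comes from the observation that $\bigcup_k\br\Omega_k$ cannot meet $B(x_0,r/2)$: indeed $\bigcup_k\Omega_k = \R^n\setminus K$ is disjoint from $B(x_0,r)$, and moreover $\br\Omega_k\setminus\Omega_k = \partial\Omega_k\subset\partial K$, which is also disjoint from the open set $B(x_0,r)\subset\inter(K)$; hence $\br\Omega_k\cap B(x_0,r)=\emptyset$ for every $k$. So $\gamma$ cannot enter $B(x_0,r)$, contradicting that a piece of $\gamma$ lies in $B(x_0,3r/4)\subset B(x_0,r)$. I expect the only mild subtlety to be the bookkeeping that guarantees a point of $\gamma$ genuinely lands inside the open ball $B(x_0,r)$ (not merely on its boundary) — this is handled by choosing $\varepsilon$ and the line $L$ with enough room to spare, e.g. taking $\dist(L,x_0)<r/4$ and $\varepsilon<r/4$ and using a point of $L$ that realizes $\dist(L,x_0)$, whose $\varepsilon$-neighborhood on $\gamma$ then lies in $B(x_0,r/2)\subset B(x_0,r)$. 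No step here is deep; the main point is simply to package the detour property against the impossibility of any $\br\Omega_k$ intersecting a ball contained in $\inter(K)$.
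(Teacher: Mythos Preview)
Your proof is correct and follows essentially the same approach as the paper's: assume a ball $B\subset K$, pick a detour line $L$ through $B$, and observe that the detour path $\gamma\subset\bigcup_k\br\Omega_k$ must enter $B$ while no $\br\Omega_k$ can meet $B$. The only cosmetic difference is that you extract a point of $\gamma$ inside $B$ directly from the Hausdorff-closeness condition (every point of $L$ has a nearby point of $\gamma$), whereas the paper phrases it via separation (the ball disconnects a thin tube around $L$, forcing any path in the tube to cross $B$); your explicit justification that $\partial\Omega_k\subset\partial K$ is disjoint from $\inter(K)$ is a nice clarification the paper leaves implicit.
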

\begin{proof}
If there exists a ball $B\subset K$, then we can find a non-exceptional line $L$ with the detour property, passing through $B$. A sufficiently small neighborhood $U$ of $L$ is separated by $B$. Then any path  $\gamma \subset U$ that $\varepsilon$-follows the line $L$ has to intersect $B$. This contradicts the detour property of $L$, since a detour path $\gamma$ has to lie entirely in $\bigcup_{k=0}^\infty \br \Omega_k$.
\end{proof}

In dimension $2$ we obtain some interesting consequences. 

A point $x$ in a metric space $X$ is a \textit{local cut point} if there exists a connected open neighborhood $U$ of $x$ such that $U\setminus \{x\}$ is disconnected.

\begin{prop}\label{Detour dimension 2}
Let $K\subset \R^2$ be a detour set, whose complementary components $\Omega_k$, $k\in \N\cup \{0\}$, are Jordan regions, regarded as subsets of $\widehat \C$. Then
\begin{enumerate}[\upshape(i)]
\item $K$ is connected,
\item $\diam(\Omega_k) \to 0$ as $k\to\infty$,
\item $K$ is locally connected, and
\item $K$ has a dense set of local cut points.
\end{enumerate}
\end{prop}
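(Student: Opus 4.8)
The plan is to prove the four assertions roughly in the stated order, with the later ones leaning on the earlier ones, after two preliminary remarks. First, since $\inter(K)=\emptyset$ by Lemma~\ref{Detour empty interior}, the open set $\bigcup_k\Omega_k$ is dense in $\R^2$, and in fact every point of $K$ is a limit of points of $\bigcup_k\partial\Omega_k$: a point $x\in K$ is approached by points of some $\Omega_k$, and any segment from such a point to $x$ exits $\Omega_k$ through $\partial\Omega_k$; hence $K=\overline{\bigcup_k\partial\Omega_k}$. Second, if $j\neq k$ and $\overline{\Omega_j}\cap\overline{\Omega_k}\neq\emptyset$, then, since the $\Omega$'s are open, pairwise disjoint, and contained in $\R^2\setminus K$, this intersection is contained in $\partial\Omega_j\cap\partial\Omega_k$; thus the \emph{adjacency graph} $G$ on the index set, with $j\sim k$ exactly when $\overline{\Omega_j}\cap\overline{\Omega_k}\neq\emptyset$, is unchanged if one uses boundaries instead of closures, and $\bigcup_k\partial\Omega_k$ (a union of connected sets indexed by the vertices of $G$, with adjacent sets meeting) is connected whenever $G$ is.

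For (i): fix $k$. Since $\Omega_k$ is open and non-empty, its orthogonal projection onto the perpendicular of one of the two distinguished directions has positive measure; intersecting with the full-measure set of lines in that direction having the detour property gives a detour line $L$ meeting $\Omega_k$. As $K$ is compact, $L$ also meets $\Omega_0$, since $L$ leaves every ball containing $K$ and the exterior of such a ball lies in $\Omega_0$. Choose $\varepsilon$ small enough that $L\cap\Omega_k$ and $L\cap\Omega_0$ each contain a ball of radius $\varepsilon$; then a detour path $\gamma$ for $L$ at scale $\varepsilon$ meets both $\Omega_k$ and $\Omega_0$, because $\gamma$ comes within $\varepsilon$ of every point of $L$. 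Since $\gamma$ is connected, is covered by the finitely many closed sets $\overline{\Omega_m}$, $m\in S$, that it meets, and meets both $\overline{\Omega_0}$ and $\overline{\Omega_k}$, a standard chaining argument puts $0$ and $k$ in the same component of the subgraph of $G$ on $S$. As $k$ was arbitrary, $G$ is connected, hence $\bigcup_k\partial\Omega_k$ is connected, and therefore $K=\overline{\bigcup_k\partial\Omega_k}$ is connected.

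For (ii): suppose, for contradiction, that $\diam\Omega_{k_j}\geq\delta>0$ for infinitely many $k_j\geq1$. Since $\diam\Omega_{k_j}\geq\delta$ forces the projection of $\Omega_{k_j}$ onto the perpendicular of one of the two distinguished directions to have length $\gtrsim\delta$, after passing to a subsequence we may assume that for one fixed direction these projection intervals all have length $\gtrsim\delta$ and lie in a bounded interval; a pigeonhole/Vitali argument on them produces a whole interval $J$ of levels $t$, sitting with definite room inside infinitely many of the projection intervals, such that the line $L_t$ meets $\Omega_{k_j}$ with points on both sides of $L_t$ for infinitely many $j$. Picking $t\in J$ with $L_t$ a detour line yields a detour line $L$ meeting infinitely many $\Omega_{k_j}$ transversally. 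Passing to a further subsequence, $\overline{\Omega_{k_j}}\to F$ in the Hausdorff metric, where $F$ is a continuum with $\diam F\geq\delta$, $F\subseteq K$, and $F$ meets $L$ with points on both sides of it. For small $\varepsilon$, the part of $F$ in the strip $\Sigma_\varepsilon=\{x:\dist(x,L)\leq\varepsilon\}$ contains a sub-continuum $Q$ joining the two edges of $\Sigma_\varepsilon$, and such a $Q$ separates the two ends of $\Sigma_\varepsilon$ from one another. A detour path $\gamma$ for $L$ at scale $\varepsilon$ lies in $\Sigma_\varepsilon$ and shadows $L$ end to end, so it must meet $Q\subseteq K$; since $\gamma$ touches $K$ only along $\bigcup_{m\in S}\partial\Omega_m$ with $S$ finite, $F$ must meet $\partial\Omega_m$ for some $m$ in a finite set. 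The contradiction is extracted by letting $\varepsilon\to0$: the separating sub-continua shrink onto $F\cap L$ and, since the only complementary components clustering onto $F$ are the $\Omega_{k_j}$ themselves, the detour path is forced to enter infinitely many of them, contradicting finiteness. \emph{This step is the main obstacle}: turning the topological picture ``$F$ separates the thin strip'' into the quantitative statement that the detour path must interact with infinitely many complementary components is delicate, and it is here that the hypotheses that all the $\Omega_k$ (including $\Omega_0$) are Jordan regions, and that the $\Omega_{k_j}$ are mutually disjoint, are essential.

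For (iii) and (iv): by (i), $K$ is a continuum, and in $\widehat{\C}$ its complementary components are Jordan regions, so their boundaries are Jordan curves, in particular locally connected; by (ii) these components form a null sequence. By the classical fact that a continuum in $S^2$ whose complementary domains have locally connected boundaries and diameters tending to zero is locally connected, we obtain (iii). For (iv), let $W$ be a non-empty relatively open subset of $K$; by the first preliminary remark $W$ meets some $\partial\Omega_k$, so we may fix a small ball $B$ with $B\cap K\subseteq W$, $B\cap\Omega_k\neq\emptyset$, and $\overline{\Omega_k}\not\subseteq B$. Running the construction of (i), choose a detour line $L$ through $\Omega_k$ that exits $\Omega_k$ inside $B$, and a detour path $\gamma$ for $L$ at a small scale, so that $\gamma$ enters $\Omega_k$ inside $B$ and, shadowing the unbounded line $L$, leaves $\overline{\Omega_k}$ at a point $z\in\partial\Omega_k\cap\partial\Omega_m\cap B$ for some $m\neq k$ --- a pinch point of two complementary components. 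Using the local connectivity from (iii) together with the Jordan-region structure near $z$, such a $z$ is a local cut point of $K$: a sufficiently small connected neighborhood of $z$ in $K$ is disconnected by removing $z$, since the parts of $K$ abutting $\Omega_k$ and abutting $\Omega_m$ near $z$ cannot be rejoined within $K\setminus\{z\}$. As $W$ was arbitrary, the local cut points are dense in $K$. (Verifying that the pinch point is genuinely a local cut point requires a careful local analysis and is a secondary technical point.)
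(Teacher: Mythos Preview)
Your approach to (i) is correct but different from the paper's: you use the detour property to show that the adjacency graph on the complementary components is connected, whence $\bigcup_k\partial\Omega_k$ is connected and $K$ is its closure. The paper instead gives a purely topological argument (not using the detour property at all): a separation of $K$ would produce a connected set in $\widehat\C\setminus K$ separating two pieces of $K$, which must then lie in a single $\Omega_k$, contradicting simple connectedness. Both work; the paper's version is slightly more general, yours is more thematic.

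Part (iii) matches the paper.

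There is a genuine gap in (ii). Your Hausdorff-limit continuum $F\subset K$ may well separate the thin strip $\Sigma_\varepsilon$, forcing any detour path $\gamma$ to meet $F$; but $\gamma$ meets $K$ only along finitely many $\partial\Omega_m$, so you only conclude that $F$ meets some $\partial\Omega_m$. Nothing prevents a \emph{single} component $\Omega_m$ (not one of your $\Omega_{k_j}$) from having its boundary run along $F$ near $L$, so that for every $\varepsilon$ the detour path crosses $F$ at a point of that one $\partial\Omega_m$. Your asserted contradiction ``the detour path is forced to enter infinitely many of them'' does not follow, and the sentence ``the only complementary components clustering onto $F$ are the $\Omega_{k_j}$ themselves'' is unjustified. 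The paper avoids this by choosing the line more carefully: using a Borel--Cantelli/projection argument, it finds a detour line $L$ and $\varepsilon>0$ such that \emph{both} boundary lines of the $\varepsilon$-strip around $L$ actually enter $\Omega_k$ for infinitely many $k$. Then for each such $k$ the set $\overline{\Omega_k}$ itself separates the strip, so any path in the strip from one end to the other must enter the open set $\Omega_k$; hence any detour path hits infinitely many components, a direct contradiction. The point is to make each individual $\Omega_k$ (not a limit continuum) do the separating.

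There is also a gap in (iv). You want a pinch point $z\in\partial\Omega_k\cap\partial\Omega_m$ inside a prescribed small ball $B$, and you try to obtain it as the place where a detour path $\gamma$ leaves $\overline{\Omega_k}$. But nothing forces that exit point to lie in $B$: the detour path shadows the whole line $L$, and $\gamma$ may leave $\overline{\Omega_k}$ far from $B$ even if $L$ meets $\Omega_k$ inside $B$. The paper's route is different and cleaner: first show (via the detour property, with a strip argument as in (ii)) that \emph{every} component $\Omega_{k_0}$ has some neighbor $\Omega_l$ with $\overline{\Omega_{k_0}}\cap\overline{\Omega_l}\neq\emptyset$, and that such an intersection point is a local cut point (proved by constructing an explicit Jordan loop through it using arcs in $\Omega_{k_0}$ and $\Omega_l$). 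Density then comes for free: near any $x\in K$ either there are arbitrarily small components $\Omega_k$ (by (ii)), each carrying a local cut point on its boundary, or only finitely many components meet a neighborhood of $x$, in which case (since $\inter K=\emptyset$) $x$ itself lies on two boundaries and is a local cut point.
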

\begin{proof}
For (i) we do not need to use the detour property. For $n\in \N\cup\{0\}$ let $K_n= \R^2 \setminus \bigcup_{k=0}^n\Omega_k$ and note that $K_n$ is connected. Indeed, if $x,y\in K_n$ and $L$ is the line segment connecting $x$ and $y$, then the union of the set $L\cap K_n$ with the sets $\partial \Omega_k$ that intersect $L$, $k=0,\dots,n$, is a continuum in $K_n$ connecting $x$ and $y$; it is important here that $\partial \Omega_k$ is connected. Then $K= \bigcap_{n=0}^\infty K_n$ is connected, since it is the decreasing intersection of continua; see \cite[Theorem 5.3, Chapter IV.5, p.~81]{New}.

For (ii) we argue by contradiction, assuming that there exists $\delta>0$ such that the set $Z\coloneqq \{k:\diam(\Omega_k) \geq \delta\}$ is infinite. We first show that there exists a set of lines with positive measure, parallel to one of the two directions given by the definition of a detour set, that intersect $\Omega_k$ for infinitely many $k\in Z$.

Denote by $v_0,w_0 \in \R^2$ the two linearly independent directions as in the definition of a detour set, and the corresponding lines by $L_{v_0}$ and $L_{w_0}$. Also, for $v\in \{v_0\}^\perp$ and $w\in \{w_0\}^\perp$ define $L_v\coloneqq L_{v_0} +v$, and $L_{w}\coloneqq L_{w_0}+w$, which are parallel translations of $L_{v_0}$ and $L_{w_0}$, respectively. If $\Omega_k$ is a bounded component and $\diam(\Omega_k) \geq \delta$, there exists a line segment $I_k$ with endpoints on $\partial \Omega_k$ that has diameter at least $\delta$. We partition this segment in three segments of equal length and denote the middle one by $J_k$. By projecting $J_k$ to $\{v_0\}^\perp$ and $\{w_0\}^\perp$ we see that there exists a constant $c>0$ depending only on the angle between $v_0 $ and $w_0$ such that 
\begin{align*}
m_1(\{ v\in \{v_0\}^\perp : L_v \cap J_k\neq \emptyset\}) +m_1(\{ w\in \{w_0\}^\perp:L_w \cap J_k\neq \emptyset\})\geq c\delta.
\end{align*}
Without loss of generality, we may assume that for infinitely many $k\in Z$ we have
\begin{align*}
m_1(\{ v\in \{v_0\}^\perp:L_v \cap J_k\neq \emptyset\}) \geq \frac{c}{2}\delta.
\end{align*}
We also shrink $Z$ so that this is the case for all $k\in Z$. Hence, if $L_v\cap J_k\neq \emptyset$ for some $k\in Z$, then the projection of $J_k$ to $\{v_0\}^\perp$ has length at least $c\delta/2$. This is also the case for the other two subsegments of $I_k\supset J_k$. Thus, $L_{v'}\cap I_k \neq \emptyset$ for all $v'\in \{v_0\}^\perp$ with $|v'-v|<c\delta/2$.

Since $\Omega_k$ is connected, if a line $L$ intersects the interior of the segment $I_k$, then it has to intersect $\Omega_k$. Setting $\varepsilon \coloneqq c\delta/2$, we conclude that for each $k\in Z$ there exists an interval $A_k\coloneqq \{ v\in \{v_0\}^\perp:L_v \cap J_k\neq \emptyset\} \subset \{v_0\}^\perp$ with $m_1(A_k)\geq \varepsilon$ such that for all $v\in A_k$ we have $L_v\cap \Omega_k\neq \emptyset$, and moreover, $L_{v'}\cap \Omega_k\neq \emptyset$ for all $v'\in \{v_0\}^\perp$ with $|v'-v|<\varepsilon$; see Figure \ref{fig:Projection}.
 
Since $K$ is compact, it follows that $\{v\in \{v_0\}^\perp: L_v\cap K\neq \emptyset\}$ is contained in a compact interval. Hence, by the Borel-Cantelli lemma we obtain that the set $\{v\in \{v_0\}^{\perp}: v\in A_k \textrm{ for infinitely many } k\in Z\}$ has positive measure. In particular, we can find a non-exceptional line $L\coloneqq L_v$ with the detour property as in Definition \ref{Definition Detour set} such that $v\in A_k$ for infinitely many $k\in Z$. By shrinking $Z$ we assume that this holds for all $k\in Z$, and that $\Omega_k$ is bounded for all $k\in Z$.

\begin{figure}
	\centering
	\begin{tikzpicture}[
    scale=5,
    axis/.style={thick, ->, >=stealth'},
    important line/.style={thick},
    dashed line/.style={dashed, thin},
    pile/.style={thick, ->, >=stealth', shorten <=2pt, shorten
    >=2pt},
    every node/.style={color=black}
    ]


		
		\begin{scope}[scale=0.7]
			
			\draw[axis] (-1,-0.2)--(0.8,1.5) node[anchor=south east] {$w_0$};
			\draw[axis] (-1,0)--(1,0) node[anchor= north west] {$v_0$};
			\draw[axis] (-1,0)--(-1,1.7) node[anchor=south east] {$\{v_0\}^\perp  $};			
			
			\begin{scope}[rotate = 60, scale=.2]
				\def\a{2.4};\def\A{1};
				\def\b{0.05};\def\B{2};
				\def\c{0.8};\def\C{1};
				\def\d{0.1};\def\D{34};
				\def\e{3};\def\f{0};
            	 	\draw [fill=black!5,smooth, domain=0:360] plot 
            			(
            			{\a*cos( \A*\x )+\b*sin(\B*\x)+\e}, 
            			{\c*sin( \C*\x ) + \d*cos(\D*\x )-\f}
            			) ;

				\def\x{0};            	
            	\node[draw, shape=circle,fill=black, scale=0.3] (a) at (
            			{\a*cos( \A*\x )+\b*sin(\B*\x)+\e}, 
            			{\c*sin( \C*\x ) + \d*cos(\D*\x )-\f}
            			) {};
            	\def\x{180};
            	\node[draw, shape=circle,fill=black, scale=0.3](b) at (
            			{\a*cos( \A*\x )+\b*sin(\B*\x)+\e}, 
            			{\c*sin( \C*\x ) + \d*cos(\D*\x )-\f}
            			) {};
            	
            	\draw[thick] (a.center)--(b.center);
            	
            	
				\pgfmathparse{\customticknum-1}
    				\foreach \i in {1,...,\pgfmathresult}
    					{
       					\draw let
       						 \p{ab}=($(b)-(a)$),
       						 \n{ab}={veclen(\x{ab}, \y{ab})}  
       						 in 
       						 ($(a)!\i*(\n{ab})/(\customticknum)!(b)!0.5*\customticklen!(b)$) -- ($(a)!\i*(\n{ab})/(\customticknum)!(b)!0.5*\customticklen!(b)$) 
       						 node[draw, shape=circle,fill=black, scale=0.3] (c\i) {};
						}  
						\node[xshift=0.35cm, yshift=0.2cm] (J) at (c2) {$J_k$};

			\end{scope}
			
			\draw  	[dashed]   (c1.center)--($(-1,0)!(c1)!(-1,1)$) node[draw, shape=circle,fill=black, scale=0.3](d1) {};			
			\draw	[dashed]   (c2.center) --($(-1,0)!(c2)!(-1,1)$)node[draw, shape=circle,fill=black, scale=0.3](d2) {};

			\node[]	(O) at ($(J)+(0.35,0.3)$) {$\Omega_k$};
			\node[] (A) at ($0.5*(d1) +0.5*(d2)+(0.07,0)$) {$A_k$};
			
		\end{scope}
	
	\end{tikzpicture}
	\caption{Projecting $J_k$ to $\{v_0\}^\perp.$} \label{fig:Projection}
\end{figure}
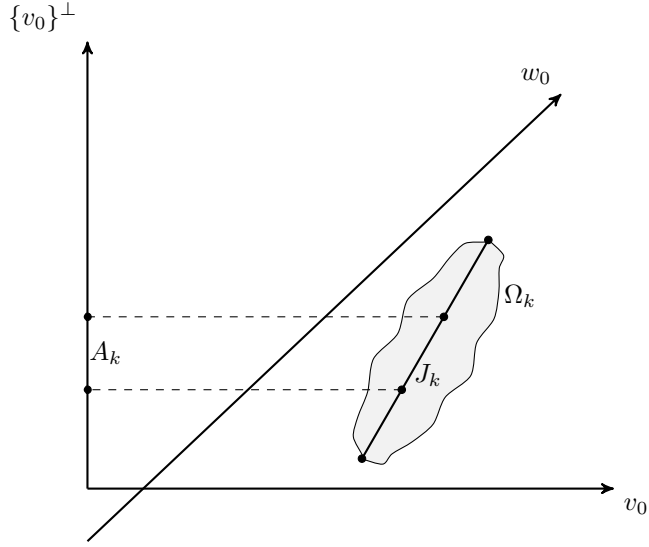

Now, observe that, by construction, both boundary lines of the $\frac{\varepsilon}{2}$-neighborhood $U$ of $L$ intersect $\Omega_k$, for all $k\in Z$. This implies that $U\setminus \br \Omega_k$ has two unbounded components, and any curve $\gamma\subset U$ connecting them must intersect $\Omega_k$. Thus,  if $\gamma$ is a detour path very close to the line $L$, it has to intersect $\Omega_k$ for all $k\in Z$, and in particular it has to intersect infinitely many sets $\Omega_k$. Thus, there is no finite set $\Gamma$ such that $\gamma\subset \bigcup_{k\in \Gamma} \br\Omega_k$. This contradicts the detour property of $L$. The proof of (ii) is completed.

In our setting, local connectedness follows immediately from (ii); see \cite[Theorem 4.4, pp.~112--113]{Wh} and also \cite[Lemma 19.5]{Mil}.

Next, we prove (iv). We fix a complementary Jordan region $\Omega_{k_0}$, and we claim that there exists a component $\Omega_l$, $l\neq k_0$, such that $\br \Omega_{k_0}\cap \br \Omega_l\neq \emptyset$. We assume that for the moment, and fix a point $z\in \br \Omega_{k_0}\cap \br \Omega_l$. We now show that this is a local cut point of $K$. Consider a small connected neighborhood $W$ of $z$ in $K$, contained in $B(z,\rho)$, where $\rho<\min \{\diam( \Omega_{k_0}), \diam( \Omega_l)\}$. This exists by the local connectedness of $K$, from (iii). Let $\beta_{1}\subset \Omega_{k_0}$ be a simple open path from a point $x_1 \in \Omega_{k_0}\setminus B(z,\rho)$ to $z$. Also, let $\beta_2 \subset \Omega_l$ be a simple open path from a point $x_2 \in  \Omega_l \setminus B(z,\rho)$ to $z$. We concatenate the  paths $\br \beta_1$ and $\br \beta_2$ with a simple path $\beta_3 \subset \R^2 \setminus (\br \beta_1\cup \br\beta_2\cup B(z,\rho))$ that connects $x_1$ and $x_2$, to obtain a loop $\beta$. Note that $\beta\cap W= \{z\}$, and  $W\setminus \beta$ is disconnected, so $z$ is a local cut point. Indeed, arbitrarily close to $z$ there are points of $\partial \Omega_{k_0}\cap W$ lying in the ``interior", and points lying in the ``exterior" of the loop $\beta$.

Since $\inter(K)=\emptyset$ by Lemma \ref{Detour empty interior}, it follows that the complementary components $\Omega_k$ are dense in $K$. Thus, a point $x\in K$ either has arbitrarily small neighborhoods intersecting infinitely many components $\Omega_k$, or every small neighborhood of $x$ intersects finitely many sets $\Omega_k$. In the first case, by (ii) we can find arbitrarily small sets $\Omega_k$ near $x$, and thus we can find local cut points near $x$, by the previous paragraph. In the latter case, again because $\inter(K)=\emptyset$, we must have $x\in \br \Omega_{k_1}\cap \br \Omega_{k_2}$ for some $k_1\neq k_2$. The argument in the previous paragraph shows that $x$ is a local cut point. Hence, indeed there exists a dense set of local cut points.

Finally, we show our initial claim. We fix a set $\Omega_{k_0}$, and we shall show that there exists $l\neq k_0$ such that $\br \Omega_{k_0}\cap \br \Omega_l\neq \emptyset$.  As in the proof of (ii), we consider the directions $v_0,w_0\in \R^2$ along which the lines with the detour property exist. Since $\partial \Omega_{k_0}$ is a Jordan curve, its projection to $\{v_0\}^\perp$ is a non-trivial interval $A$.

Consider two lines $L_{v_1}, L_{v_2}$, for $v_1,v_2\in A$, using the notation from (ii). Then the strip $U$ defined by the two lines is separated by $\partial \Omega_{k_0}$, i.e., $U\setminus \partial \Omega_{k_0}$ is disconnected. We fix a non-exceptional line $L_v$, parallel to $v_0$, between $L_{v_1}$ and $L_{v_2}$ that has the detour property. We also consider a detour path $\gamma$, as in Definition \ref{Definition Detour set} that lies in $U$. Then $\gamma$ has to intersect $\partial \Omega_{k_0}$. Note that, by definition, $\gamma$ is contained in a \textit{finite} union $\br \Omega_{k_1}\cup \dots \cup \br \Omega_{k_m}$, with $k_0\in \{k_1,\dots,k_m\}$. By the connectedness of $\gamma$, the set $\br \Omega_{k_0}$ cannot have positive distance from the other sets in the union, provided these are more than one; thus, there exists $l\in \{k_1,\dots,k_m\}\setminus \{k_0\}$ such that $\br \Omega_{k_0}\cap \br \Omega_l \neq \emptyset$. 

It remains to prove that $\{k_1,\dots,k_m\} \supsetneq \{k_0\}$. For this, it suffices to have that $\gamma$ intersects both complementary components of $\partial \Omega_{k_0}$. If $\Omega_{k_0}$ is a bounded component, then the statement is trivial, since $\gamma$ already meets the unbounded component of $K$. If $\Omega_{k_0}$ is the unbounded component, and $V$ is the (bounded) Jordan region enclosed by $\partial \Omega_{k_0}$, then $\gamma$ has to intersect $V$ as one can see by a connectedness argument. 
\end{proof}

\begin{remark}
The proof of (iv) shows the stronger conclusion that for every complementary Jordan region $\Omega_k$ there exists $l\neq k$ such that $\br \Omega_k \cap \br \Omega_l\neq \emptyset$.  In fact, by refining the argument, one can show that for every arc $J\subset \partial \Omega_k$ there exists $l\neq k$ such that $J\cap \br \Omega_l\neq \emptyset$. In particular, the set of local cut points of $K$ which lie on $\partial \Omega_k$ is dense in $\partial \Omega_k$.
\end{remark}

\begin{remark}
A set $K\subset \R^2$ satisfying the conditions in Proposition \ref{Detour dimension 2} is \textit{not} necessarily a detour set. An example is constructed as follows. Let $A_k$ be the strip $[2^{-k},2^{-k+1}]\times [0,1]$, with $2^{k}$ disjoint open squares of sidelength $2^{-k}$ removed. Then take $K\coloneqq \br{\bigcup_{k=1}^\infty A_k}$. It is immediate that $K$ has the properties of Proposition \ref{Detour dimension 2}. However, $\{0\}\times [0,1] \subset K$, and this prevents detours of lines intersecting $\{0\} \times [0,1]$, since this set intersects the closure of only one complementary component of $K$. For constructions which are ``partial converses" to the preceding proposition see Section \ref{Section Examples}.
\end{remark}

\begin{prop}\label{Detour Zero measure}
Let $K\subset \R^n$ be a detour set. If $m_n(\partial \Omega_k)=0$ for all $k\in \N\cup \{0\}$, and
\begin{align*}
\sum_{k= 1}^\infty \diam(\Omega_k)^n <\infty,
\end{align*}
then $m_n(K)=0$.
\end{prop}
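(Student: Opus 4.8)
The plan is to show that $m_n(K) = 0$ by splitting $K$ into the part covered by the boundaries of complementary components together with the ``hidden'' part that does not lie on any such boundary. Write $K = K_{\partial} \cup K_{\mathrm{hid}}$, where $K_{\partial} = \bigcup_{k=0}^{\infty} \partial\Omega_k$ and $K_{\mathrm{hid}} = K \setminus K_{\partial}$. The set $K_{\partial}$ has measure zero immediately from the hypothesis $m_n(\partial\Omega_k) = 0$ for all $k$ and countable subadditivity of $m_n$. So the whole difficulty is to prove $m_n(K_{\mathrm{hid}}) = 0$, and this is where the detour property and the summability assumption $\sum_k \diam(\Omega_k)^n < \infty$ must be used together.

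For the hidden part I would argue using Fubini and the detour property, as follows. Fix one of the $n$ linearly independent directions supplied by the definition of a detour set; after a linear change of coordinates (which distorts $m_n$ only by a fixed constant factor, harmless for a null-set statement) we may assume this is the $x_n$-direction. By Fubini it suffices to show that for $m_{n-1}$-almost every $y$ in the orthogonal hyperplane, the slice $K_{\mathrm{hid}} \cap L_y$ has one-dimensional measure zero, where $L_y$ is the line through $y$ parallel to $x_n$. Restrict attention to the full-measure set of $y$ for which $L_y$ has the detour property. Fix such a line $L = L_y$ and fix $\varepsilon > 0$; let $\gamma$ be a detour path lying $\varepsilon$-close to $L$, so $\gamma \subset \br\Omega_{k_1} \cup \dots \cup \br\Omega_{k_m}$ for finitely many indices, all of which are intersected by $L$. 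The key point is that a point $z \in L \cap K_{\mathrm{hid}}$ lies in no $\partial\Omega_k$, hence in no $\br\Omega_k$ at all except possibly as... no: since $K_{\mathrm{hid}} \cap \br\Omega_k \subset K \cap \br\Omega_k = \partial\Omega_k$, a hidden point is disjoint from every $\br\Omega_k$. Consequently $L \cap K_{\mathrm{hid}}$ is disjoint from the trace of every detour path along $L$. Projecting $\gamma$ onto $L$ (i.e. composing with the nearest-point projection onto $L$), the image contains the whole ``extent'' of $L$ between the endpoints of $\gamma$ except for the gaps created by the finitely many sets $\br\Omega_{k_j}$; more precisely, the complement in $L$ of the projection of $\gamma$ is covered by the projections of those $\br\Omega_{k_j}$, each of which is an interval of length at most $2\diam(\Omega_{k_j})$. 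Hence $L \cap K_{\mathrm{hid}}$, being disjoint from $\gamma$ and from the $\Omega_{k_j}$ themselves, is contained in $\bigcup_{j=1}^{m} (\text{projection of } \br\Omega_{k_j} \text{ onto } L)$, a finite union of intervals of total length $\le 2\sum_{j} \diam(\Omega_{k_j})$.

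That bound alone is not yet $0$, so the final step is to combine the detour data over all relevant lines and let $\varepsilon \to 0$, using the summability assumption to control the total contribution. The cleanest route: for each $k \ge 1$ and each direction $v$ among the $n$ chosen ones, the set of lines $L$ parallel to $v$ that meet $\br\Omega_k$ has $(n-1)$-measure at most (a constant times) $\diam(\Omega_k)^{n-1}$, and on each such line the projection of $\br\Omega_k$ contributes at most $2\diam(\Omega_k)$ to the above cover. By Fubini, integrating over $y$ in the hyperplane orthogonal to $v$, the total ``measure'' contributed to $K_{\mathrm{hid}}$ by a single component $\Omega_k$ (through lines in direction $v$) is at most $C\diam(\Omega_k)^{n-1} \cdot 2\diam(\Omega_k) = C' \diam(\Omega_k)^n$. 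Since for $m_{n-1}$-a.e.\ $y$ the line $L_y$ has the detour property and therefore $L_y \cap K_{\mathrm{hid}}$ is covered, for every $\varepsilon$, by finitely many such component-projections, a Borel--Cantelli / monotone convergence argument over a sequence $\varepsilon_j \to 0$ shows that $m_n(K_{\mathrm{hid}})$ is bounded by a tail of the series $\sum_{k\ge 1} \diam(\Omega_k)^n$. Because this series converges, the tails tend to $0$, giving $m_n(K_{\mathrm{hid}}) = 0$ and hence $m_n(K) = 0$.

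The main obstacle I anticipate is the bookkeeping in this last step: making rigorous the claim that ``for a.e.\ line the hidden slice is covered by finitely many component-projections, and the total over all lines is controlled by $\sum \diam(\Omega_k)^n$'' requires care, because the finite index set $\{k_1,\dots,k_m\}$ depends on both the line $L$ and on $\varepsilon$. The right way to handle this is to fix a tail index $N$, note that for each fixed $\varepsilon$ and a.e.\ line the detour path either uses only components with index $< N$ (in which case its finitely many gap-intervals on $L$ are a fixed finite family, contributing nothing in the limit $\varepsilon \to 0$ since those finitely many $\br\Omega_k$ with $k<N$ together with the null set $K_\partial$ already account for all of $L\cap K$ outside a set we can make small) or it must use some component with index $\ge N$ whose projection then appears in the cover; summing the contributions of all components with index $\ge N$ over all lines gives $\le C' \sum_{k\ge N}\diam(\Omega_k)^n \to 0$ as $N \to \infty$. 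Packaging this correctly — perhaps by directly covering $K_{\mathrm{hid}}$ in $\R^n$ (not slice by slice) using, for each line in a density-one set, the $\varepsilon$-neighborhood of the finitely many $\br\Omega_{k_j}$, and invoking Fubini once at the end — is the part that needs the most attention, but it is routine given Proposition~\ref{Detour dimension 2}-style slicing ideas already used in this section.
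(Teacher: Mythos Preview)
Your overall strategy --- reduce to $K_{\mathrm{hid}}=K\setminus\bigcup_k\partial\Omega_k$, slice by Fubini along a detour direction, bound $m_1(L\cap K_{\mathrm{hid}})$ by a tail of $\sum_k\diam(\Omega_k)$, integrate over lines, and let the tail index go to infinity --- is exactly the paper's. But the step you call ``bookkeeping'' hides a real gap, not a routine one.

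The projection argument is confused: the projection of the connected path $\gamma$ onto $L$ is a single interval with no gaps. What is actually true is that every point of $L$ in the relevant range lies within $\varepsilon$ of some $\br\Omega_{k_j}$ used by $\gamma$; but this cover includes $\br\Omega_0$ and any other small-index component $\gamma$ happens to touch, so the bound is useless as it stands. Your dichotomy does not repair this: when $\gamma$ uses \emph{some} component with index $\ge N$ it typically also uses components with index $<N$, and a hidden point $z\in L$ may well be $\varepsilon$-near one of the latter, so you cannot conclude that the large-index projections alone cover $L\cap K_{\mathrm{hid}}$. The missing idea is \emph{localization}: pass to the open intervals $I_j$ of $L\setminus\bigcup_{k<N}\br\Omega_k$ (which contain $L\cap K_{\mathrm{hid}}$), and for each bounded $I_j$ take the detour scale so small that the subpath $\gamma_{I_j}$ of $\gamma$ near $I_j$ cannot touch any $\br\Omega_k$ with $k<N$; property~(iii) of Definition~\ref{Definition Detour set} then forces that subpath to use only components with $k\ge N$ that also meet $L$, yielding $\diam(I_j)\lesssim\sum_{k\ge N,\,\br\Omega_k\cap\gamma_{I_j}\neq\emptyset}\diam(\Omega_k)$. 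A second issue you do not address is over-counting: distinct $I_j$ may share components $\br\Omega_k$, so summing these bounds over $j$ can repeat each $\diam(\Omega_k)$ many times. The paper deals with this by a greedy grouping of the $I_j$ into clusters whose detour subpaths use pairwise disjoint families of components, bounding each cluster's diameter at the cost of a factor $3$; only after this does integration over lines produce $\lesssim\sum_{k\ge N}\diam(\Omega_k)^n\to 0$.
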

Recall that $\Omega_0$ is the unbounded complementary component of $K$, and $\{\Omega_k\}_{k\geq 1}$ is the family of the bounded components.

\begin{remark}
This proposition already shows the special feature of a detour set. To illustrate that, we note that each square Sierpi\'nki carpet in the plane trivially satisfies the assumptions that the boundaries of complementary components, which are squares, have measure zero, and that the sum of the areas of the squares is finite. However, by modifying the construction of the standard Sierpi\'nski carpet, it is very easy to construct carpets having positive area. The reason is, of course, that carpets fail to be detour sets, because the closures of their complementary components are disjoint. 
\end{remark}

\begin{proof}
We fix one of the directions $v_0$ as in Definition \ref{Definition Detour set}, along which the lines with the detour property exist. We write $L_v=L_{v_0}+v$, $v\in \{v_0\}^\perp\simeq \R^{n-1}$, to denote the lines parallel to $v_0$. The assumption that  $m_n(\partial \Omega_k)=0$ and Fubini's theorem imply that
\begin{align}\label{Detour Zero measure m_n(K)}
m_n(K)=m_n(K\setminus \bigcup_{k=0}^\infty \partial \Omega_k) = \int m_1(L_v\cap K\setminus \bigcup_{k=0}^\infty \partial \Omega_k) \, dm_{n-1}(v),
\end{align}
hence, we need to obtain an estimate for 
\begin{align*}
m_1(L_v\cap K\setminus \bigcup_{k=0}^\infty \partial \Omega_k)=m_1(L_v\setminus \bigcup_{k=0}^\infty \br \Omega_k).
\end{align*}

We fix a line $L=L_v$, and a finite set $Z\subset \N\cup \{0\}$ that contains $0$. The set $L\setminus \bigcup_{k\in Z} \br\Omega_k$ is an open subset of the line $L$, so it is the union of at most countably many open intervals. We enumerate the intervals that intersect $K$ by $\{I_j\}_{j\in \N}$. In case these are finitely many, we set $I_j=\emptyset$ for large $j$. Note that all the intervals $I_j$ are bounded (since $0\in Z$) and have their endpoints on sets $\partial \Omega_k$, $k\in Z$, but otherwise they are disjoint from $\bigcup_{k\in Z}\br \Omega_k$.

We now fix one interval $I=I_j \eqqcolon(x,y)$ and estimate $\diam(I)$. Let $\delta>0$, and consider a compact subinterval $[a,b]\subset (x,y)$ such that $|x-a|<\delta$ and $|y-b|<\delta$. Let $\gamma$ be a detour path as in Definition \ref{Definition Detour set} that lies $\eta$-close to the line $L$, where $0<\eta<\delta$ is so small that  the $\eta$-neighborhood of $[a,b]$ does not intersect the closed set $\bigcup_{k\in Z}\br \Omega_k$. Then we can choose a subpath $\gamma_I\subset \gamma$ that is entirely contained in the $\eta$-neighborhood of $[a,b]$, and whose endpoints $c,d$ satisfy $|a-c|<\eta<\delta$ and $|b-d|<\eta<\delta$; see Figure \ref{fig:detour}. In particular, $\gamma_I$ does not intersect $\bigcup_{k\in Z} \br \Omega_k$ and stays entirely in $\bigcup_{k\notin Z} \br \Omega_k$, and in fact, it is contained in the union of only finitely many sets $\br\Omega_k$, $k\in \Gamma(I)$; here $\Gamma(I)=\{k\in \Gamma : \gamma_I \cap \br\Omega_k\neq \emptyset\}$ and $\Gamma$ is as in Definition \ref{Definition Detour set}(ii), so that $\gamma\subset \bigcup_{k\in \Gamma} \br \Omega_k$. Hence,
\begin{align}\label{Detour Zero measure One Estimate}
\diam(I)\leq |x-c|+\diam(\gamma_I) + |d-y| \leq 4\delta +\sum_{k\in \Gamma(I)} \diam(\Omega_k).
\end{align}

We now fix finitely many intervals $I_1,\dots,I_N$ and perform the same procedure for each interval $I_j=(x_j,y_j)$ to obtain paths $\gamma_j$ as above, that do not intersect $\bigcup_{k\in Z} \br\Omega_k$ and are contained in the union of finitely many sets $\br\Omega_k$, $k\in \Gamma(I_j)$. We note that $\delta$ is still fixed. By running a greedy algorithm we group the indices $j$ into finite sets $A_i$, $i=1,\dots,M$, such that:
\begin{enumerate}[(i)]
\item intervals $I_j$ with indices $j$ lying in distinct sets $A_i$ intersect disjoint sets of regions $\br\Omega_k$, $k\in \bigcup_{j=1}^N \Gamma(I_j)$, and
\item $\bigcup_{j\in A_i} \bigcup_{k\in \Gamma(I_j)} \br\Omega_k$ is connected for all $i=1,\dots,M$.
\end{enumerate}

We fix $i$ and points $z,w \in \bigcup_{j\in A_i} I_j$. Assume that $z\in I_{j_1}$ and $w \in I_{j_2}$. We have
\begin{align*}
|z-w|\leq \diam(I_{j_1}) + |x_{j_1}-x_{j_2}|+\diam(I_{j_2}).
\end{align*}
We bound the first and the last term using \eqref{Detour Zero measure One Estimate}. For the middle term we follow the procedure used in the proof of \eqref{Detour Zero measure One Estimate}. Namely, we have 
\begin{align*}
|x_{j_1}-x_{j_2}|&\leq \dist(x_{j_1},\gamma_{j_1}) + \diam(\bigcup_{j\in A_i} \bigcup_{k\in \Gamma(I_j)} \br\Omega_k) +\dist(x_{j_2},\gamma_{j_2})\\
&\leq 4\delta + \sum_{{k\in \bigcup_{j\in A_i}\Gamma(I_j)}} \diam(\Omega_k),
\end{align*}
where in the last step we used the connectedness of $\bigcup_{j\in A_i} \bigcup_{k\in \Gamma(I_j)} \br\Omega_k$, by property (ii) of the set $A_i$. Hence, we obtain the estimate
\begin{align*}
|z-w|&\leq 12\delta +  \sum_{k\in \Gamma(I_{j_1})} \diam(\Omega_k)+\sum_{k\in \Gamma(I_{j_2})} \diam(\Omega_k)\\
&\qquad\qquad + \sum_{{k\in \bigcup_{j\in A_i}\Gamma(I_j)}} \diam(\Omega_k).
\end{align*}
This yields the bound
\begin{align*}
\diam( \bigcup_{j\in A_i} I_j) \leq 12\delta+ 3\cdot \sum_{{k\in \bigcup_{j\in A_i}\Gamma(I_j)}} \diam(\Omega_k).
\end{align*}
Hence, 
\begin{align*}
m_1( \bigcup_{j=1}^N I_j) &= m_1( \bigcup_{i=1}^M \bigcup_{j\in A_i}I_j)\leq  \sum_{i=1}^M \diam(\bigcup_{j\in A_i} I_j) \\
&\leq 12M\delta + 3 \sum_{i=1}^M \sum_{{k\in \bigcup_{j\in A_i}\Gamma(I_j)}} \diam(\Omega_k).
\end{align*}
Recall that the detour paths $\gamma_j$ do not intersect $\br\Omega_k$, $k\in Z$, and 
\begin{align*}
\bigcup_{j=1}^N \Gamma(I_j)\subset \Gamma\subset  \{k:\br\Omega_k\cap L\neq \emptyset\}\setminus Z,
\end{align*}
by definition. Using property (i) of the sets $A_i$ we can combine the two sums in one sum:
\begin{align*}
m_1( \bigcup_{j=1}^N I_j) \leq 12M\delta +3\sum_{\substack{k:\br\Omega_k\cap L\neq \emptyset\\k\notin Z}} \diam(\Omega_k).
\end{align*}
Letting $\delta \to 0$ we obtain 
\begin{align*}
m_1( \bigcup_{j=1}^N I_j)  \leq 3\sum_{\substack{k:\br\Omega_k\cap L\neq \emptyset\\k\notin Z}} \diam(\Omega_k).
\end{align*}
Since there is no dependence on $N$ in the right-hand side, we can let $N\to\infty$, so we have
\begin{align*}
m_1(L\setminus \bigcup_{k=0}^\infty \br\Omega_k )&\leq m_1(L\setminus \bigcup_{k\in Z} \br\Omega_k)= m_1( \bigcup_{j=1}^\infty I_j)\leq  3 \sum_{\substack{k:\br\Omega_k\cap L\neq \emptyset\\k\notin Z}} \diam(\Omega_k).
\end{align*}

Finally, we integrate over all lines $L=L_v=L_{v_0}+v$, $v\in \{v_0\}^\perp\simeq \R^{n-1}$, and using Fubini's theorem we have
\begin{align*}
\int m_1(L_v\setminus \bigcup_{k=0}^\infty \br\Omega_k ) \,dm_{n-1}(v) &\leq 3 \sum_{k\notin Z} \diam(\Omega_k)\int_{\br\Omega_k\cap L_v\neq \emptyset} dm_{n-1}(v)\\
&\leq 3\sum_{k\notin Z} \diam(\Omega_k)^n.
\end{align*}
The latter series is convergent, by assumption. Hence, if we let $Z\to \N\cup \{0\}$ we obtain $m_n(K)=0$ by \eqref{Detour Zero measure m_n(K)}, as desired.
\end{proof}

\section{H\"older domains}\label{Section Holder}
In this section we discuss H\"older domains and we prove some lemmas that will be needed in the proof of the Main Theorem. We pay particular attention to the implicit constants in the various inequalities proved, and their dependence on the data.

Let $D\subset \R^n$ be a domain, i.e., a connected open set. For a point $x\in D$ let $\delta_D(x)\coloneqq \dist(x,\partial D)$. We define the \textit{quasihyperbolic distance} of two points $x_1,x_2\in D$ to be
\begin{align*}
k_D(x_1,x_2)= \inf_\gamma \int_\gamma \frac{1}{\delta_D(x)} ds,
\end{align*}
where the infimum is taken over all rectifiable paths $\gamma \subset D$ connecting $x_1$ and $x_2$.

\begin{definition}\label{Definition Holder}
A domain $D\subset \R^n$ is an $(\alpha,c)$-H\"older domain if there exists a \textit{basepoint} $x_0\in D$ and constants $\alpha \in (0,1]$, $c>0$ such that 
\begin{align}\label{Definition Holder inequality}
k_D(x,x_0) \leq \frac{1}{\alpha} \log \left( \frac{\delta_D(x_0)}{\delta_D(x)}\right) +c
\end{align}
for all $x\in D$. 
\end{definition}
We often drop the notation $(\alpha,c)$ and we say that $D$ is a H\"older domain if the constants are irrelevant.

A curve $\gamma \subset D$ is called a \textit{quasihyperbolic geodesic} if for any two points $x_1,x_2\in \gamma$ we have
\begin{align*}
k_D(x_1,x_2)=\int_{\gamma|_{[x_1,x_2]}} \frac{1}{\delta_D(x)} ds,
\end{align*}
where $\gamma|_{[x_1,x_2]}$ denotes the subpath of $\gamma$ between $x_1$ and $x_2$. A compactness argument shows that for any two points $x_1,x_2 \in D$ there exists a {quasihyperbolic geodesic} $\gamma \subset D$ that connects them; see \cite[Theorem 2.5.14]{BBI} for an argument.

For a domain $D\subset \R^n$ we can consider its \textit{Whitney cube decomposition} $W(D)$, which is a collection of closed dyadic cubes $Q\subset D$, with the following properties:
\begin{enumerate}[(i)]
\item the cubes of $W(D)$ have disjoint interiors, and  $\bigcup_{Q\in W(D)} Q=D$,
\item $\ell(Q)\simeq \delta_D(x)$ for all $x\in Q$, $Q\in W(D)$, and
\item if $Q_1\cap Q_2\neq \emptyset$, $Q_1,Q_2\in W(D)$, then $\frac{1}{4}\leq \frac{\ell(Q_1)}{\ell(Q_2)}\leq 4$.
\end{enumerate}
Here $\ell(Q)$ denotes the sidelength of the cube $Q$. By refining each cube $Q\in W(D)$, we may also assume that $k_D(x_1,x_2)\leq \frac{1}{3}$ for all $x_1,x_2\in Q$. For $j\in \N$ we define
\begin{align*}
D_j= \{Q\in W(D): k_D(x_0,Q)\leq j\},
\end{align*}
and $D_0\coloneqq \emptyset$. Each Whitney cube $Q$ is contained in $D_j\setminus D_{j-1}$ for some $j\in \N$. Also, we denote by $x_Q$ the center of the cube $Q$. We say that two Whitney cubes $Q_1$ and $Q_2$ with, say, $\ell(Q_1)\geq \ell(Q_2)$ are \textit{adjacent}, if a face of $Q_2$ is contained in a face of $Q_1$. In other words, the intersection $Q_1\cap Q_2$ contains an open subset of a hyperplane $\R^{n-1}\subset \R^n$. We also allow the possibility that $Q_1=Q_2$.

\begin{lemma}\label{Holder length estimate}
Let $D\subset \R^n$ be an $(\alpha,c)$-H\"older domain with basepoint $x_0$, and fix $\beta>0$. For each quasihyperbolic geodesic $\gamma$ starting at $x_0$ we have
\begin{align*}
\sum_{\substack{Q\in W(D)\\ Q\cap \gamma\neq \emptyset}} \ell(Q)^{\beta} \lesssim \delta_D(x_0)^{\beta}.
\end{align*}
The implicit constants depend only on $\alpha,c,n,\beta$ and not on $\gamma$.
\end{lemma}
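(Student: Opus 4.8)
The plan is to parametrize the geodesic $\gamma$ by quasihyperbolic arclength from $x_0$, so that $k_D(x_0,\gamma(t))=t$ for every $t$ in the (possibly infinite) parameter interval; this is immediate from the definition of a quasihyperbolic geodesic applied to the subpath $\gamma|_{[x_0,\gamma(t)]}$. The H\"older condition \eqref{Definition Holder inequality} then translates into the pointwise decay estimate $\delta_D(\gamma(t)) \le e^{\alpha c}\,\delta_D(x_0)\,e^{-\alpha t}$. Since $\ell(Q)\simeq \delta_D(x)$ for $x\in Q$, this already shows that the sidelength of a Whitney cube met by $\gamma$ decays geometrically in the quasihyperbolic time at which $\gamma$ meets it; the only remaining issue is to bound how many cubes $\gamma$ can meet in a unit time window.

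First I would observe that for a cube $Q$ with $Q\cap\gamma\ne\emptyset$, the time set $\{t:\gamma(t)\in Q\}$ is contained in $[\tau_Q,\tau_Q+\tfrac13]$, where $\tau_Q:=k_D(x_0,Q)$. The lower bound holds because $k_D(x_0,\cdot)$ equals the time parameter along $\gamma$, and the upper bound uses the refinement property $k_D(x_1,x_2)\le\tfrac13$ for $x_1,x_2\in Q$ together with the triangle inequality. Consequently every cube $Q$ met by $\gamma$ with $\lfloor\tau_Q\rfloor=m$ is in fact met during the window $[m,m+2)$.

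Next I would prove the packing bound: $\gamma$ restricted to any window $[s,s+2)$ meets at most $N=N(n)$ Whitney cubes. Fixing $x^\ast=\gamma(s)$, the geodesic property gives $k_D(x^\ast,\gamma(t))=t-s<2$ throughout the window; then the two standard estimates $k_D(x^\ast,x)\ge\bigl|\log(\delta_D(x)/\delta_D(x^\ast))\bigr|$ and $k_D(x^\ast,x)\ge\log\bigl(1+|x^\ast-x|/\max(\delta_D(x^\ast),\delta_D(x))\bigr)$ — both of which follow in one line from the fact that $\delta_D$ is $1$-Lipschitz — show that every such point $x$ satisfies $\delta_D(x)\simeq\delta_D(x^\ast)$ and $|x^\ast-x|\lesssim\delta_D(x^\ast)$. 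Hence all cubes meeting this window have sidelength $\simeq\delta_D(x^\ast)$, have pairwise disjoint interiors, and lie in a fixed ball of radius $\lesssim\delta_D(x^\ast)$, so a volume count bounds their number by $N(n)$.

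Finally I would assemble the estimate: group the cubes met by $\gamma$ by the value $m=\lfloor\tau_Q\rfloor\in\{0,1,2,\dots\}$; each group has at most $N(n)$ cubes, and any cube $Q$ in the group with $m$ satisfies $\ell(Q)\lesssim\delta_D(\gamma(t))\le e^{\alpha c}\delta_D(x_0)e^{-\alpha m}$ for a time $t$ at which $\gamma$ meets $Q$, since then $t\ge\tau_Q\ge m$. Summing $\ell(Q)^\beta$ over all cubes met by $\gamma$ therefore yields $\lesssim \delta_D(x_0)^\beta e^{\alpha\beta c}\sum_{m\ge0}e^{-\alpha\beta m}$, a convergent geometric series whose total depends only on $\alpha,\beta,c,n$, as required. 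The only genuinely delicate step is the packing bound, and within it the point that the \emph{geodesic} hypothesis — not merely a quasihyperbolic bound between two endpoints — is what forces $\gamma$ to remain inside $B(x^\ast,C\delta_D(x^\ast))$ for the whole window; a path of bounded quasihyperbolic length that is not a geodesic could wander through arbitrarily many cubes.
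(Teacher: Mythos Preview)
Your proof is correct and follows essentially the same strategy as the paper's: bin the Whitney cubes met by $\gamma$ according to (the integer part of) their quasihyperbolic distance from $x_0$, use the H\"older condition to obtain geometric decay of $\ell(Q)$ across bins, and bound the number of cubes per bin by a constant $N(n)$. Your treatment of the packing bound is more explicit than the paper's, which simply asserts that the number of Whitney cubes a geodesic crosses between two points is comparable to their quasihyperbolic distance; one minor quibble is that the geodesic hypothesis is really needed for the decay step (to get $k_D(x_0,\gamma(t))=t$, hence a lower bound on $k_D$) rather than for the packing bound itself, which holds for any path segment of quasihyperbolic length at most $2$.
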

\begin{proof}
By \eqref{Definition Holder inequality} we have
\begin{align*}
\delta_D(x) \leq \delta_D(x_0) e^{\alpha c} \exp(-\alpha k(x,x_0)).
\end{align*}
If $Q\in D_j\setminus D_{j-1}$, then $k(x,x_0)\geq j-1$ for all $x\in Q$, thus
\begin{align*}
\delta_D(x) \lesssim \delta_D(x_0) \exp(-\alpha j)
\end{align*}
with implicit constant depending on $\alpha,c$. 

Since $\gamma$ is a quasihyperbolic geodesic starting at $x_0$, there exists a uniform constant $N\in \N$ such that for each $j\in \N$ there exist at most $N$ Whitney cubes $Q\in D_j\setminus D_{j-1}$ intersecting $\gamma$. This follows from the observation that the quasihyperbolic distance of two points not contained in adjacent cubes is comparable to the number of Whitney cubes that the geodesic intersects, between the two points. 

We now have
\begin{align*}
 \sum_{\substack{Q\in W(D)\\ Q\cap \gamma\neq \emptyset}} \delta_D(x_Q)^{\beta}&= \sum_{j=1}^\infty  \sum_{\substack{Q\in D_j\setminus D_{j-1}\\ Q\cap \gamma\neq \emptyset}} \delta_D(x_Q)^{\beta}\lesssim N\delta_D(x_0)^{\beta} \sum_{j=1}^\infty  \exp(-\alpha\beta j)\lesssim \delta_D(x_0)^\beta.
\end{align*} 
By the properties of the Whitney cubes we have $\delta_D(x_Q)\simeq \ell(Q)$ for all $Q\in W(D)$, so the conclusion follows.
\end{proof}

\begin{corollary}\label{Holder is Quasiball}
Let $D\subset \R^n$ be an $(\alpha,c)$-H\"older domain with basepoint $x_0$. For any quasihyperbolic geodesic $\gamma\subset D$ starting at $x_0$ we have
\begin{align*}
\length(\gamma)\lesssim \delta_D(x_0).
\end{align*}
In particular, 
\begin{align*}
\diam(D)&\simeq \delta_D(x_0), \quad \textrm{and}\\
 m_n(D)&\simeq \delta_D(x_0)^n\simeq \diam(D)^n,
\end{align*}
with implicit constants depending only on $\alpha,c,n$.
\end{corollary}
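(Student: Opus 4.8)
The plan is to deduce everything from Lemma \ref{Holder length estimate} applied with the exponent $\beta = 1$, together with the Whitney cube structure and elementary connectedness arguments.

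First I would prove the bound $\mathcal H^1(\gamma)\lesssim \delta_D(x_0)$. For a quasihyperbolic geodesic $\gamma$ starting at $x_0$, the portion of $\gamma$ inside any Whitney cube $Q$ has length at most $\diam(Q) \simeq \ell(Q)$ (in fact one can bound the length of $\gamma\cap Q$ by a constant times $\ell(Q)$ using that $k_D(x_1,x_2)\le \tfrac13$ on $Q$ and $\delta_D \simeq \ell(Q)$ on $Q$, so the quasihyperbolic length controls the Euclidean length up to $\ell(Q)$). Hence
\begin{align*}
\mathcal H^1(\gamma) \le \sum_{\substack{Q\in W(D)\\ Q\cap\gamma\neq\emptyset}} \mathcal H^1(\gamma\cap Q) \lesssim \sum_{\substack{Q\in W(D)\\ Q\cap\gamma\neq\emptyset}} \ell(Q) \lesssim \delta_D(x_0),
\end{align*}
the last inequality being Lemma \ref{Holder length estimate} with $\beta=1$.

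Next, for $\diam(D)\simeq \delta_D(x_0)$: the inequality $\delta_D(x_0)\le \diam(D)$ is immediate since $\delta_D(x_0)=\dist(x_0,\partial D)\le \diam(D)$ (assuming $D\neq \R^n$, which holds as $D$ has nonempty boundary; this is implicit in the H\"older domain setup). For the reverse, given any $x\in D$, join $x_0$ to $x$ by a quasihyperbolic geodesic $\gamma$; then $|x-x_0|\le \mathcal H^1(\gamma)\lesssim \delta_D(x_0)$, so $\diam(D)\lesssim \delta_D(x_0)$ after taking the supremum over $x$.

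Finally, for $m_n(D)\simeq \delta_D(x_0)^n$: the upper bound $m_n(D)\le m_n(B(x_0,\diam(D))) \simeq \diam(D)^n \simeq \delta_D(x_0)^n$ follows from the diameter estimate. The lower bound is even simpler: $B(x_0,\delta_D(x_0))\subset D$, so $m_n(D)\ge c_n \delta_D(x_0)^n$. Combining with $\diam(D)\simeq \delta_D(x_0)$ gives $m_n(D)\simeq \diam(D)^n$ as well. I do not expect a genuine obstacle here; the only point requiring mild care is the comparison between Euclidean length and quasihyperbolic length on a single Whitney cube, which is a routine consequence of properties (ii) of the Whitney decomposition and the refinement that makes $k_D\le \tfrac13$ on each cube.
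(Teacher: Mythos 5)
Your proposal is correct and follows essentially the same route as the paper: bound $\mathcal H^1(\gamma\cap Q)\lesssim \ell(Q)$ on each Whitney cube via the quasihyperbolic--Euclidean comparison (using $k_D\le \tfrac13$ on $Q$ and $\delta_D\simeq\ell(Q)$ there), sum and invoke Lemma \ref{Holder length estimate} with $\beta=1$, then deduce the diameter and volume comparisons from the inclusions $B(x_0,\delta_D(x_0))\subset D\subset B(x_0,C\delta_D(x_0))$. You even spell out the $\diam(D)\lesssim\delta_D(x_0)$ step (joining $x_0$ to an arbitrary $x\in D$ by a geodesic of controlled length) slightly more explicitly than the paper's one-line remark, but it is the same argument.
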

\begin{proof}
We fix a quasihyperbolic geodesic $\gamma$ as in the statement. For any Whitney cube $Q\in W(D)$ and points $x_1,x_2\in Q$ we have $k_D(x_1,x_2)\lesssim 1$. Thus, if we denote by $\gamma_Q$ the portion of the path $\gamma$ inside $Q$, then
\begin{align*}
\frac{\length(\gamma_Q)}{\ell(Q)} \lesssim \int_{  \gamma_Q}\frac{1}{\delta_D(x)} ds \lesssim 1.
\end{align*}
Hence,
\begin{align*}
\length(\gamma) \leq \sum_{\substack{Q\in W(D)\\ Q\cap \gamma\neq \emptyset}} \length(\gamma_Q)\lesssim \sum_{\substack{Q\in W(D)\\ Q\cap \gamma\neq \emptyset}} \ell(Q).
\end{align*}
Applying Lemma \ref{Holder length estimate} for $\beta=1$ we obtain the desired bound $\delta_D(x_0)$.

The other parts follow trivially since $B(x_0,\delta_D(x_0))\subset D \subset B(x_0,\diam(D))$.
\end{proof}

\begin{lemma}\label{Geodesics to boundary}
Let $D\subset \R^n$ be a H\"older domain with basepoint $x_0$. Then each point $x\in \partial D$ is the landing point of a quasihyperbolic geodesic $\gamma$ passing through $x_0$.
\end{lemma}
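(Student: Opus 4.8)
The plan is to fix a boundary point $x\in\partial D$ and produce the geodesic as a limit of quasihyperbolic geodesics from $x_0$ to interior points approaching $x$. First I would pick a sequence $y_j\in D$ with $y_j\to x$, and for each $j$ let $\gamma_j$ be a quasihyperbolic geodesic from $x_0$ to $y_j$ (these exist by the compactness argument recalled before Definition \ref{Definition Holder}). By Corollary \ref{Holder is Quasiball}, or rather the estimate $\mathcal H^1(\gamma_j)\lesssim \delta_D(x_0)$ from its proof applied to the portion of $\gamma_j$ from $x_0$ to any point, each $\gamma_j$ has length bounded uniformly by a constant depending only on $\alpha,c,n$; in particular all the $\gamma_j$ lie in a fixed ball. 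Parametrizing each $\gamma_j$ by arclength on $[0,\ell_j]$ with $\ell_j\lesssim \delta_D(x_0)$, and extending constantly past $\ell_j$, we get a uniformly bounded, uniformly Lipschitz (with constant $1$) family of maps on a compact interval; by Arzel\`a--Ascoli we extract a subsequence converging uniformly to a $1$-Lipschitz path $\gamma:[0,\ell_\infty]\to\overline D$ with $\gamma(0)=x_0$ and $\gamma(\ell_\infty)=x$.

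Next I would check that $\gamma$ restricted to $(0,\ell_\infty)$ stays in $D$ and is a quasihyperbolic geodesic there. For the first point: if $\gamma$ hit $\partial D$ at some interior time $t_0<\ell_\infty$, then since $\delta_D(\gamma_j(t))\to 0$ uniformly near $t_0$, the quasihyperbolic lengths $\int \delta_D^{-1}\,ds$ of the pieces of $\gamma_j$ near $t_0$ would blow up, contradicting that these pieces are geodesic segments between two fixed-quasihyperbolic-distance endpoints (indeed $k_D(x_0,y_j)$ stays bounded: it is at most $\frac1\alpha\log(\delta_D(x_0)/\delta_D(y_j))+c$, which does go to infinity, so I need to be more careful). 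The correct argument is to take interior points $z=\gamma(s)$, $w=\gamma(t)$ with $0<s<t<\ell_\infty$; for $j$ large $\gamma_j(s),\gamma_j(t)$ are close to $z,w$ and the subpath $\gamma_j|_{[s,t]}$ realizes $k_D(\gamma_j(s),\gamma_j(t))$. Since $\delta_D$ is bounded below on a neighborhood of the compact arc $\gamma([s,t])\subset D$ (using that this arc avoids $\partial D$, which in turn follows because on $[s,t]$ we have a uniform positive lower bound for $\delta_D(\gamma_j(\cdot))$ coming from the fact that the quasihyperbolic length of $\gamma_j|_{[s',t']}$ for $0<s'<s$ is finite and controls how small $\delta_D$ can get), the integrals $\int_{\gamma_j|_{[s,t]}}\delta_D^{-1}\,ds$ converge to $\int_{\gamma|_{[s,t]}}\delta_D^{-1}\,ds$, and lower semicontinuity of $k_D$ together with the geodesic equation for $\gamma_j$ forces $k_D(z,w)=\int_{\gamma|_{[z,w]}}\delta_D^{-1}\,ds$.

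I expect the main obstacle to be the bookkeeping that keeps the arc $\gamma((0,\ell_\infty))$ away from $\partial D$: one must rule out the geodesic ``touching the boundary and coming back'', and more subtly rule out $\gamma$ being constant on a subinterval or backtracking. The key mechanism is that along a quasihyperbolic geodesic the distance to the boundary cannot oscillate — between two points of $\gamma_j$ at quasihyperbolic distance $d$ apart, $\delta_D$ changes by at most a factor $e^{d}$ (a standard consequence of the $1$-Lipschitz property of $\log\delta_D$ with respect to $k_D$) — so once $\delta_D(\gamma_j(t))$ is bounded below at one interior parameter it is bounded below on any compact subinterval not containing the endpoint, uniformly in $j$. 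Combined with the arclength parametrization (which prevents backtracking in the limit only up to reparametrization, but that is harmless: we may pass to the geodesic reparametrization of $\gamma$), this gives that $\gamma$ is a genuine quasihyperbolic geodesic from $x_0$ landing at $x$, and since it passes through $x_0$ (its starting point), the proof is complete. Finally, I should remark that for $x_0$ itself the statement is trivial, and that the geodesic may be taken simple by excising loops, though this is not needed for the statement.
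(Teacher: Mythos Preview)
Your overall strategy---take geodesics $\gamma_j$ from $x_0$ to interior points $y_j\to x$, use the uniform length bound from Corollary~\ref{Holder is Quasiball}, extract a limit by Arzel\`a--Ascoli---is exactly the paper's approach. The divergence comes in the verification that the limit $\gamma$ is a geodesic, where you work much harder than necessary and leave a gap.

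The gap is in your justification that $\gamma([s,t])\subset D$: you want a uniform positive lower bound for $\delta_D(\gamma_j(\cdot))$ on $[s,t]$, and you appeal to ``the quasihyperbolic length of $\gamma_j|_{[s',t']}$ \dots\ is finite and controls how small $\delta_D$ can get'' via the $1$-Lipschitz property of $\log\delta_D$ in $k_D$. But that control goes the wrong way: to bound $\delta_D(\gamma_j(t))$ below via $\delta_D(\gamma_j(s))$ you need a \emph{uniform} bound on $k_D(\gamma_j(s),\gamma_j(t))$, which is exactly what you don't yet have (and which would itself follow from a lower bound on $\delta_D$ along the arc---the reasoning is circular). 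Your subsequent worries about backtracking and constancy are also unnecessary: the paper's definition of quasihyperbolic geodesic only asks that the integral equation $k_D(x_1,x_2)=\int_{\gamma|_{[x_1,x_2]}}\delta_D^{-1}\,ds$ hold for points of $\gamma$, not that $\gamma$ be injective.

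The paper cuts through all of this with a single application of Fatou's lemma (lower semicontinuity of weighted length under uniform convergence). For any $y,z\in\gamma\cap D$ and $y_k,z_k\in\gamma_k$ converging to them,
\[
\int_{\gamma|_{[y,z]}}\frac{1}{\delta_D}\,ds \;\le\; \liminf_{k\to\infty}\int_{\gamma_k|_{[y_k,z_k]}}\frac{1}{\delta_D}\,ds \;=\; \liminf_{k\to\infty} k_D(y_k,z_k)\;=\;k_D(y,z),
\]
using that $\gamma_k$ is a geodesic and that $k_D$ is continuous on $D\times D$. The reverse inequality is definitional, so equality holds. This single line simultaneously (i) gives the geodesic equation, and (ii) forces $\gamma|_{[y,z]}\subset D$, since if the arc touched $\partial D$ the left integral would be infinite while the right side is finite. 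No separate argument about boundary avoidance, oscillation of $\delta_D$, or backtracking is needed.
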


This result is proved in \cite[Theorem 3.21]{MV} for \textit{uniform domains}, a notion strictly stronger than that of H\"older domains.

\begin{proof}
Let $x\in \partial D$ and consider a sequence $x_k\in D$ with $x_k \to x$. Let $\gamma_k$ be a quasihyperbolic geodesic from $x_k$ to  $x_0$. By Corollary \ref{Holder is Quasiball} we have that $\length(\gamma_k)$ is uniformly bounded, independently of $k$. Applying the Arzel\`a-Ascoli theorem (after reparametrizing the paths $\gamma_k$) we obtain a subsequence, still denoted by $\gamma_k$, that converges uniformly to a rectifiable  path $\gamma \subset \br{D}$, connecting $x$ to $x_0$, with $\length(\gamma)\leq \liminf_{k\to\infty} \length (\gamma_k)<\infty$; see \cite[Theorem 2.5.14]{BBI} for a detailed argument. We consider the rescaled (Euclidean) arc-length parametrization $\gamma\colon [0,1]\to \br D$, with $\gamma(0)=x_0$ and $\gamma(1)=x$. We now wish to show that $\gamma$, or rather, its open subpath restricted to $[0,1)$, is a quasihyperbolic geodesic. One first has to argue that $\gamma|_{[0,1)}$ is contained in $D$. 

Assuming this, one can easily derive that $\gamma$ has to be a geodesic. It is a general fact that if a sequence of geodesics $\zeta_k$ in a \textit{length space} $(X,d)$ converges uniformly to a path $\zeta$ in $X$, then $\zeta$ is also a geodesic; see \cite[Theorem 2.5.17]{BBI}. In our case one needs to apply this principle to all compact subpaths $\zeta\subset \subset D$ of the open path $\gamma$, and suitable subpaths $\zeta_k$ of $\gamma_k$ converging to $\zeta$.

Finally, we argue that $\gamma|_{[0,1)}\subset D$. If there is some time $t_1\in (0,1)$ with $\gamma(t_1) \in \partial D$, we consider the first such time. Note that the curve $\gamma$ cannot be constant on $(t_1,1)$ since it is parametrized by arc-length. Hence, either there exists $t_2\in (t_1,1)$ with $\gamma(t_2)\in D$, or $\{\gamma(t): t\in [t_1,1]\}$ is a non-trivial continuum contained in $\partial D$. The first scenario can be easily excluded, because all the quasihyperbolic geodesics connecting $x_0$ to points in a small neighborhood of $\gamma(t_2)$ must remain in a fixed compact subset of $D$. Thus, the limiting path $\gamma|_{[0,t_2]}$ is also contained in the same compact set, and cannot meet $\partial D$, a contradiction.

In the second scenario, there exists a point $y=\gamma(t_3)\in \partial D$, $t_3\in (t_1,1)$, with $y\neq x$. Now we use a technical lemma from \cite{SS}.

\begin{lemma}[{\cite[Theorem 3]{SS}}]
Let $D\subset \R^n$ be a H\"older domain with basepoint $x_0$. Then there exist constants $c_1,c_2>0$ such that whenever $\zeta$ is a quasihyperbolic geodesic joining $x_0$ to a point $z\in D$ we have
\begin{align*}
k_D(y,x_0) \leq c_1 \log \left(\frac{\delta_D(x_0)}{\length(\zeta|_{[y,z]})} \right) +c_2
\end{align*}
for all $y\in \zeta$.
\end{lemma}
We apply this for the paths $\gamma_k$ connecting $x_0$ to $x_k$, and for points $y_k\in \gamma_k$ which are very close to $y\in \partial D$. Since $x_k\to x$ and $y_k\to y$, but $y\neq x$, there exists $\varepsilon>0$ such that $|x_k-y_k|\geq \varepsilon$ for all sufficiently large $k$. Therefore,
\begin{align*}
k_D(y_k,x_0)&\leq c_1 \log \left(\frac{\delta_D(x_0)}{\length(\gamma_k|_{[y_k,x_k]})} \right) +c_2\\
&\leq c_1 \log \left(\frac{\delta_D(x_0)}{\varepsilon} \right)+c_2
\end{align*}
for all large $k$. Note that $y_k\to \partial D$, thus $k_D(y_k,x_0)\to \infty$ as $k\to \infty$, and this leads to a contradiction.
\end{proof}

\begin{definition}\label{Defintion Shadow}
Let $D\subset \R^n$ be a H\"older domain with basepoint $x_0$. For a Whitney cube $Q \in W(D)$ we define the \textit{shadow} of $Q$ to be the set $SH(Q)$ of points  $x\in \partial D$, such that there exists a quasihyperbolic geodesic, intersecting $Q$, from the basepoint $x_0$ to the point $x$. We also define
\begin{align*}
s(Q)\coloneqq  \diam( SH(Q)).
\end{align*}
\end{definition}

\begin{lemma}\label{Shadow-QH distance}
Let $D\subset \R^n$ be an $(\alpha,c)$-H\"older domain with basepoint $x_0$. We have
\begin{align*}
\sum_{Q\in W(D)} s(Q)^n \lesssim \int_D k_D(x,x_0)^n dx<\infty.
\end{align*}
\end{lemma}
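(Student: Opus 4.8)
The strategy is to compare both quantities to the same combinatorial sum over Whitney cubes, namely $\sum_{Q\in W(D)} \ell(Q)^n j(Q)^n$, where $j(Q)$ denotes the integer level with $Q\in D_{j(Q)}\setminus D_{j(Q)-1}$. For the right-hand inequality (finiteness of the integral), I would use the H\"older estimate \eqref{Definition Holder inequality}: on a cube $Q$ at level $j$ we have $k_D(x,x_0)\simeq j$ for $x\in Q$ and, as in the proof of Lemma \ref{Holder length estimate}, $\delta_D(x)\simeq \ell(Q)\lesssim \delta_D(x_0)e^{-\alpha j}$. Hence $m_n(Q)\simeq \ell(Q)^n\lesssim \delta_D(x_0)^n e^{-\alpha n j}$, and since for each $j$ the number of Whitney cubes at level $j$ grows at most exponentially in $j$ (each such cube can be reached from $x_0$ by a geodesic crossing $\simeq j$ cubes, and the branching is controlled), we get
\begin{align*}
\int_D k_D(x,x_0)^n\,dx = \sum_{Q\in W(D)} \int_Q k_D(x,x_0)^n\,dx \lesssim \sum_{j=1}^\infty j^n \cdot (\#\{Q: j(Q)=j\})\cdot \delta_D(x_0)^n e^{-\alpha n j}<\infty,
\end{align*}
provided the cube count grows slower than $e^{\alpha n j}$; this is exactly where the H\"older hypothesis is used, since the volume bound $\sum_{j(Q)=j}\ell(Q)^n \lesssim \delta_D(x_0)^n$ (from $D$ having finite measure, Corollary \ref{Holder is Quasiball}) combined with $\ell(Q)\simeq \delta_D(x_0)e^{-\alpha j}$ forces $\#\{Q:j(Q)=j\}\lesssim e^{\alpha n j}$ after summing. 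Summing the geometric-times-polynomial series gives the finiteness.

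\textbf{Shadow bound.} For the left-hand inequality, the key point is to relate $s(Q)=\diam(\mathcal{SH}(Q))$ to the cubes that lie ``below'' $Q$. If $x\in\mathcal{SH}(Q)$, then by Lemma \ref{Geodesics to boundary} (and the definition of the shadow) there is a quasihyperbolic geodesic $\gamma_x$ from $x_0$ through $Q$ to $x$; the portion of $\gamma_x$ from $Q$ onward has, by Lemma \ref{Holder length estimate} applied with basepoint essentially at the center of $Q$ — or more precisely by the same geometric decay estimate restricted to levels $\geq j(Q)$ — total Euclidean length $\lesssim \ell(Q)$. Consequently $|x - x_Q|\lesssim \ell(Q)$, so $\mathcal{SH}(Q)$ is contained in a ball of radius $\lesssim \ell(Q)$ and $s(Q)\lesssim \ell(Q)$. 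This already gives $\sum_Q s(Q)^n \lesssim \sum_Q \ell(Q)^n$, but that sum can diverge, so we need to be more careful: we must weight by the level. The refined estimate is that $s(Q)\lesssim \sum_{Q'} \ell(Q')$ where $Q'$ ranges over cubes met by geodesics passing through $Q$ and sitting at levels $\geq j(Q)$; using $\ell(Q')\lesssim \delta_D(x_0)e^{-\alpha j(Q')}$ and the bounded-branching fact from Lemma \ref{Holder length estimate}, a single geodesic tail contributes $\lesssim \ell(Q)$, but to bound $\sum_Q s(Q)^n$ we instead observe that each boundary point $x$ lies in $\mathcal{SH}(Q)$ only for cubes $Q$ on geodesics landing at $x$, and these cubes form (up to bounded multiplicity) a single chain of length $\simeq$ the number of levels.

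\textbf{Main obstacle and how I would finish.} The crux — and the step I expect to be hardest — is organizing the double sum so that the polynomial factor $k_D(x,x_0)^n$ on the right genuinely dominates $\sum_Q s(Q)^n$ on the left, rather than just $\sum_Q \ell(Q)^n$ which is too weak. The mechanism I would use: write $s(Q)^n \lesssim \ell(Q)^n \cdot C(Q)^n$ where $C(Q)$ counts the cubes at levels $\geq j(Q)$ reachable through $Q$; by bounded branching $C(Q)$ can be large, but a weighted rearrangement shows $\sum_{Q} s(Q)^n \lesssim \sum_{Q'} \ell(Q')^n \cdot (\text{number of ancestors of } Q' \text{ on its geodesic})^{n-1}\cdot(\ldots)$, and the number of ancestors of $Q'$ is $\lesssim j(Q')\simeq k_D(x_{Q'},x_0)$. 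Since $\ell(Q')^n\simeq m_n(Q')$ and $k_D(x_{Q'},x_0)^n \simeq \int_{Q'} k_D(x,x_0)^n\,dx$ (the quasihyperbolic distance being roughly constant on a Whitney cube), summing over $Q'$ reproduces $\int_D k_D(x,x_0)^n\,dx$. Assembling: reduce $s(Q)$ to a tail-length estimate via Lemma \ref{Holder length estimate}; transfer the sum $\sum_Q s(Q)^n$ to a sum indexed by the ``deepest'' cube with a multiplicity that is at most polynomial in the level; identify that polynomial-weighted cube sum with the integral of $k_D(\cdot,x_0)^n$ up to constants depending only on $\alpha,c,n$; and invoke the finiteness computation above to close.
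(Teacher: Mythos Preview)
The paper does not prove this lemma: it cites \cite[Section~3]{JS} for the inequality $\sum_Q s(Q)^n\lesssim\int_D k_D(\cdot,x_0)^n$ (with implicit constant depending only on $n$) and \cite[Theorem~2]{SS} for the finiteness of the integral. So your proposal has to be measured against those two cited arguments.

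\textbf{Finiteness of the integral.} Your argument breaks down. First, the H\"older condition \eqref{Definition Holder inequality} gives only the one-sided bound $\ell(Q)\lesssim\delta_D(x_0)e^{-\alpha j}$, not the two-sided estimate $\ell(Q)\simeq\delta_D(x_0)e^{-\alpha j}$ you use; the general lower bound coming from $k_D(x_0,x)\geq\log(\delta_D(x_0)/\delta_D(x))$ is only $\ell(Q)\gtrsim\delta_D(x_0)e^{-j}$, which for $\alpha<1$ does not match. Second, even granting your count $\#\{Q:j(Q)=j\}\lesssim e^{\alpha n j}$, substituting it into your own displayed bound yields
\[
\sum_{j\geq 1} j^n\cdot e^{\alpha n j}\cdot \delta_D(x_0)^n e^{-\alpha n j}=\delta_D(x_0)^n\sum_{j\geq 1} j^n=\infty.
\]
What is actually needed is exponential decay of the \emph{total} volume at level $j$, i.e.\ $\sum_{j(Q)=j}\ell(Q)^n\lesssim e^{-\beta j}$ for some $\beta>0$; this is the nontrivial content of \cite[Theorem~2]{SS} (they prove exponential integrability of $k_D(\cdot,x_0)$), and it does not follow from the crude bound $\sum_{j(Q)=j}\ell(Q)^n\leq m_n(D)$.

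\textbf{Shadow inequality.} Your last paragraph isolates the correct mechanism: interchange the order of summation and charge each cube $Q'$ by the number of its ancestors, which is $\lesssim j(Q')\simeq k_D(x_{Q'},x_0)$; combined with $\ell(Q')^n\simeq m_n(Q')$ this reproduces $\int_D k_D(\cdot,x_0)^n$. That is essentially the Jones--Smirnov route. But the steps leading there are not right. The appeal to ``Lemma~\ref{Holder length estimate} with basepoint essentially at the center of $Q$'' is illegitimate: the H\"older inequality \eqref{Definition Holder inequality} is tied to the fixed basepoint $x_0$ and does not transfer to $x_Q$. Consequently the estimate $s(Q)\lesssim\ell(Q)$ fails in general (for $\alpha<1$ the geodesic tail from a level-$j$ cube has length $\lesssim\delta_D(x_0)e^{-\alpha j}$, while $\ell(Q)$ can be as small as $\delta_D(x_0)e^{-j}$). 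The subsequent ansatz $s(Q)^n\lesssim\ell(Q)^n\cdot C(Q)^n$ with $C(Q)$ a descendant count is also unjustified: $s(Q)$ is controlled by the maximal \emph{length} of a geodesic tail through $Q$, which is a sum of descendant sidelengths along one geodesic, not the number of descendants times $\ell(Q)$. To make the interchange work you need a clean inequality of the form $s(Q)\lesssim\sum_{Q'\in\mathrm{Des}(Q)}\ell(Q')$ (or a layered version thereof) and then an honest ancestor count; as written, the proposal does not supply this.
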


The first inequality is proved in \cite[Section 3, p. 275]{JS} and, in fact, the implicit constant depends only on $n$. The integrability of the quasihyperbolic distance in H\"older domains is the conclusion of \cite[Theorem 2]{SS}.

We also include a technical lemma.

\begin{lemma}\label{Holder Shadows Connections}
Let $D\subset \R^n$ be a H\"older domain with basepoint $x_0$. For every $\varepsilon>0$ and every $x\in \partial D$ there exists $r>0$ such that for all points $y\in B(x,r)\cap \partial D$ there exist adjacent Whitney cubes $Q_x,Q_y\in W(D)$ with $x\in SH(Q_x)$, $y\in SH(Q_y)$, and $\ell(Q_x)\leq \varepsilon$, $\ell(Q_y)\leq \varepsilon$. Furthermore, the quasihyperbolic geodesic from $x$ to $x_0$, restricted to a subpath from $x$ to $Q_x$, does not intersect any Whitney cube $Q$ with $\ell(Q)>\varepsilon$. The same holds for the corresponding subpath of the geodesic from $y$ to $x_0$.
\end{lemma}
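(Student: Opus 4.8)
I would split the statement into a pointwise part and a uniformity part. Since $D$ is a H\"older domain it is bounded, $\diam(D)\simeq \delta_D(x_0)<\infty$ by Corollary~\ref{Holder is Quasiball}, so only finitely many Whitney cubes have sidelength $>\varepsilon$; let $\mathcal L$ denote their union, a compact set, and note $\dist(x,\mathcal L)>0$ because every $Q\in W(D)$ lies in the open set $D$ while $x\in\partial D$. For the pointwise statement, given any $p\in\partial D$, fix by Lemma~\ref{Geodesics to boundary} a quasihyperbolic geodesic $\gamma$ from $x_0$ to $p$. By Corollary~\ref{Holder is Quasiball} we have $\mathcal H^1(\gamma)<\infty$, hence the terminal subpaths $\gamma|_{[z,p]}$ have diameter tending to $0$ as $z\to p$ along $\gamma$, and in particular $\delta_D(z)\to 0$. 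Taking $z$ on $\gamma$ sufficiently close to $p$ and letting $Q\in W(D)$ contain $z$, we get $\ell(Q)\simeq \delta_D(z)\le\varepsilon$, $p\in\mathcal S\mathcal H(Q)$ (because $\gamma$ passes through $Q$), and, since the finitely many cubes of sidelength $>\varepsilon$ met by $\gamma$ occur before $z$, the subpath of $\gamma$ from $p$ to $Q$ meets no such cube; moreover $Q$ may be chosen within any prescribed distance of $p$. This already settles the lemma for a single point.

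\textbf{Choosing $r$.} For the clause ``for all $y\in B(x,r)\cap\partial D$'' I would use that the closure $\br D$ of a H\"older domain is locally connected, a fact I would establish separately (e.g.\ from the descending families of Whitney cubes seeing each boundary point, together with the summability $\sum_Q s(Q)^n<\infty$ of Lemma~\ref{Shadow-QH distance}). Since $\br D\setminus\mathcal L$ is relatively open in $\br D$ and contains $x$, the connected component $V$ of $x$ in $\br D\setminus\mathcal L$ is again relatively open in $\br D$, so one may pick $r>0$ with $B(x,r)\cap\br D\subset V$ and $B(x,r)\cap\mathcal L=\emptyset$. Then any $y\in B(x,r)\cap\partial D$ lies in $V$, hence can be joined to $x$ by a path $\sigma$ inside $V$, and every Whitney cube meeting $\sigma$ has sidelength $\le\varepsilon$ since $\sigma\subset\br D\setminus\mathcal L$.

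\textbf{Extracting the adjacent pair.} Fix such $y$ and $\sigma$. Choose geodesics $\gamma_x,\gamma_y$ from $x_0$ to $x,y$, and as in the first paragraph deep points $a\in\gamma_x$, $b\in\gamma_y$ lying in small Whitney cubes past the last cube of sidelength $>\varepsilon$ on the respective geodesic. The concatenation of $\gamma_x|_{[a,x]}$, $\sigma$, and the reverse of $\gamma_y|_{[b,y]}$ is a path from $a$ to $b$ through $x$ and then $y$ that avoids $\mathcal L$; every cube on its $\gamma_x$-part has $x$ in its shadow and every cube on its $\gamma_y$-part has $y$ in its shadow. From this chain of small cubes one extracts the required adjacent cubes $Q_x$ (with $x\in\mathcal S\mathcal H(Q_x)$) and $Q_y$ (with $y\in\mathcal S\mathcal H(Q_y)$): by taking $r$ small and choosing the non-unique geodesics $\gamma_x,\gamma_y$ compatibly, so that near the nearby boundary points $x$ and $y$ they descend ``on the same side'', one arranges the path to pass directly from the last cube of the $\gamma_x$-part into the first cube of the $\gamma_y$-part; the initial segments of $\gamma_x,\gamma_y$ joining $x,y$ to these two cubes then avoid every cube of sidelength $>\varepsilon$.

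\textbf{Main obstacle.} The real difficulty is precisely this last collapse of the connecting chain to a single adjacency of two Whitney cubes. It cannot be circumvented by reading cubes off fixed geodesics to $x$ and $y$: when the H\"older exponent is less than $1$, the terminal part of a quasihyperbolic geodesic at depth $\rho$ need not have length comparable to $\rho$, so the cubes that geodesics to two nearby boundary points pass through need not be adjacent. The flexibility that $Q_x$, $Q_y$ and the geodesics may depend on $y$ is essential, and controlling it uniformly over $y\in B(x,r)$ is where the H\"older hypothesis is genuinely used.
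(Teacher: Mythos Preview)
You have correctly identified the crux, and your proof as written does not close it: the step ``choosing the non-unique geodesics $\gamma_x,\gamma_y$ compatibly, so that near the nearby boundary points $x$ and $y$ they descend `on the same side'\,'' is exactly what has to be proved, and nothing in your setup forces it. Connecting $x$ to $y$ by a path $\sigma$ in $\br D\setminus\mathcal L$ (even granting local connectedness of $\br D$) produces only a chain of small cubes; it gives no reason why some cube in the chain through which $\gamma_x$ passes should be adjacent to one through which $\gamma_y$ passes. The shadow conditions $x\in\mathcal S\mathcal H(Q_x)$ and $y\in\mathcal S\mathcal H(Q_y)$ tie $Q_x,Q_y$ to the geodesics, not to $\sigma$, and the two pieces of information are never linked.

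The paper avoids this entirely by arguing by contradiction and compactness, and you have in fact already cited the relevant tool. Suppose the conclusion fails: then there is a sequence $x_n\to x$ for which no admissible adjacent pair exists. Take geodesics $\gamma_n$ from $x_n$ to $x_0$; by the uniform length bound of Corollary~\ref{Holder is Quasiball} and Arzel\`a--Ascoli (exactly as in the proof of Lemma~\ref{Geodesics to boundary}) a subsequence converges uniformly to a geodesic $\gamma$ from $x$ to $x_0$. Now pick on $\gamma$, starting from $x$, the last cube $Q_x$ of sidelength $<\varepsilon/4$ before $\gamma$ first meets a cube of sidelength $\ge\varepsilon/4$. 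Uniform convergence forces $\gamma_n$, for large $n$, to pass through a cube $Q_y$ adjacent to $Q_x$ (hence $\ell(Q_y)\le 4\ell(Q_x)<\varepsilon$), and likewise forces the initial segment of $\gamma_n$ up to $Q_y$ to stay in cubes adjacent to those on the initial segment of $\gamma$, hence of sidelength $<\varepsilon$. This produces the forbidden pair $Q_x,Q_y$, a contradiction. The point is that the ``compatible choice'' you were seeking is supplied automatically by the limiting procedure: $\gamma_y$ is not chosen, it is whatever $\gamma_n$ happens to be, and adjacency comes from uniform closeness to $\gamma$. No appeal to local connectedness of $\br D$ is needed.
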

\begin{proof}
Assume there exist $\varepsilon >0$, $x\in \partial D$ and a sequence $x_n\to x$ such that the conclusion fails. Then the quasihyperbolic geodesics $\gamma_n$ from $x_n$ to $x_0$ subconverge uniformly to a quasihyperbolic geodesic $\gamma$ from $x$ to $x_0$, as in the proof of Lemma \ref{Geodesics to boundary}. Let $Q_x$ be the last Whitney cube of sidelength smaller than $\varepsilon/4$ that $\gamma$ intersects, as it travels from $x$ to $x_0$. For a fixed sufficiently large $n$, $\gamma_n$ intersects an adjacent cube $Q_y$ of $Q_x$, by the uniform convergence. By the properties of Whitney cubes we have that $\ell(Q_y)\leq 4\ell(Q_x) <\varepsilon$. Again by uniform convergence we see that if $\gamma_n$ intersects a cube $Q_y'$ with $\ell(Q_y')\geq \varepsilon$ before hitting $Q_y$ (there are finitely many such cubes $Q_y'$ by the boundedness of $D$), then $\gamma$ intersects an adjacent cube $Q_x'$ with $\ell(Q_x')\geq \varepsilon/4$, before hitting $Q_x$, which is a contradiction.  The whole argument leads to a contradiction, so the statement of the lemma is true.
\end{proof}

Finally, we need the following lemma:

\begin{lemma}[{\cite[Corollary 4]{SS}}]\label{Holder measure zero}
If $D\subset \R^n$ is a H\"older domain, then $m_n(\partial D)=0$.
\end{lemma}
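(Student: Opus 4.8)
The plan is to deduce that $m_n(\partial D) = 0$ from the combination of Lemma \ref{Shadow-QH distance} and a covering argument using shadows of Whitney cubes. First I would observe that the shadows $\{\mathcal S\mathcal H(Q)\}_{Q\in W(D)}$, ranging over cubes in a fixed "level annulus" $D_j\setminus D_{j-1}$, cover all of $\partial D$: by Lemma \ref{Geodesics to boundary}, every boundary point $x$ is the landing point of a quasihyperbolic geodesic $\gamma_x$ through $x_0$; since $\gamma_x$ starts (near $x_0$) in $D_1$ and exits every $D_j$ as it runs out to $x$ (the quasihyperbolic length is infinite, by the basic comparison with the number of Whitney cubes crossed), it must intersect some cube $Q$ of $D_j\setminus D_{j-1}$, and hence $x\in \mathcal S\mathcal H(Q)$. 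Thus for each $j$,
\begin{align*}
\partial D \subset \bigcup_{Q\in D_j\setminus D_{j-1}} \mathcal S\mathcal H(Q),
\end{align*}
a cover of $\partial D$ by sets of diameter $s(Q)$.

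Next I would control the size of this cover. By the H\"older inequality \eqref{Definition Holder inequality}, if $Q\in D_j\setminus D_{j-1}$ then $\delta_D(x)\lesssim \delta_D(x_0)\exp(-\alpha j)$ for $x\in Q$, and since a shadow point is the landing point of a geodesic through $Q$, a standard estimate (as in the proof of Lemma \ref{Holder length estimate}, controlling the tail of the geodesic past $Q$ by a geometric series) gives $s(Q)\lesssim \delta_D(x_0)\exp(-\alpha j)\to 0$ as $j\to\infty$, uniformly in $Q\in D_j\setminus D_{j-1}$. Therefore the cover $\{\mathcal S\mathcal H(Q)\}_{Q\in D_j\setminus D_{j-1}}$ has mesh tending to $0$, so it is admissible for computing $\mathcal H^n$. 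Now I invoke Lemma \ref{Shadow-QH distance}: the full sum $\sum_{Q\in W(D)} s(Q)^n$ is finite, so the tail sums
\begin{align*}
\sum_{j\geq m} \ \sum_{Q\in D_j\setminus D_{j-1}} s(Q)^n \longrightarrow 0 \quad \text{as } m\to\infty.
\end{align*}
For each fixed $j\geq m$ the single annulus sum $\sum_{Q\in D_j\setminus D_{j-1}} s(Q)^n$ is bounded by this tail, hence can be made arbitrarily small; choosing $j$ large enough that both the mesh is below a prescribed $\delta$ and the annulus sum is below a prescribed $\eta$, we get $\mathcal H^n_\delta(\partial D)\leq \eta$. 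Letting $\delta\to 0$ and $\eta\to 0$ yields $\mathcal H^n(\partial D)=0$, and since $\mathcal H^n$ and $m_n$ agree up to a dimensional constant, $m_n(\partial D)=0$.

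The main obstacle I anticipate is making rigorous the claim that the shadows of a single annulus $D_j\setminus D_{j-1}$ already cover $\partial D$ with controlled (small) diameter: one must be careful that a geodesic from $x_0$ to a boundary point $x$ genuinely passes through \emph{every} level $D_j\setminus D_{j-1}$ (which uses that $\gamma_x$ has infinite quasihyperbolic length, equivalently that $\delta_D\to 0$ along $\gamma_x$, together with the comparison between quasihyperbolic distance and Whitney-cube count), and that the resulting cube $Q$ has $s(Q)$ small uniformly — the latter requiring the geometric tail estimate on geodesics in H\"older domains. Once these two uniform estimates are in place, the measure-theoretic conclusion is a routine consequence of the summability in Lemma \ref{Shadow-QH distance}. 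Alternatively, if one prefers, the same conclusion follows by a direct application of the definition of Hausdorff measure to the cover given by \emph{all} sufficiently deep cubes $\{\mathcal S\mathcal H(Q): Q\notin D_{m}\}$, whose $n$-th powers of diameters sum to the tail of the convergent series in Lemma \ref{Shadow-QH distance}; I would likely present this cleaner version.
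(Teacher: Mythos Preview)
The paper does not actually prove this lemma; it is quoted as Corollary~4 of \cite{SS}, and the text only adds the remark that in fact $\dim_H(\partial D)<n$. Your proposal, by contrast, supplies an argument internal to the paper's framework, deducing the statement from Lemma~\ref{Geodesics to boundary}, the level structure of $W(D)$, and the summability $\sum_Q s(Q)^n<\infty$ of Lemma~\ref{Shadow-QH distance}. The argument is correct. The two points you flag as potential obstacles are both fine: a geodesic from $x_0$ to $x\in\partial D$ meets every annulus $D_j\setminus D_{j-1}$, since $t\mapsto k_D(x_0,\gamma(t))$ increases continuously from $0$ to $\infty$ and Whitney cubes have quasihyperbolic diameter at most $1/3$; and the uniform bound $s(Q)\lesssim \delta_D(x_0)e^{-\alpha j}$ for $Q\in D_j\setminus D_{j-1}$ holds because, once the geodesic enters $Q$, it is at quasihyperbolic distance $>j-1$ from $x_0$, hence (again using the $1/3$ bound) only meets cubes of level $\geq j-1$, and the tail estimate of Lemma~\ref{Holder length estimate} applies. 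The Hausdorff-measure conclusion then follows exactly as you indicate; your ``cleaner version'' using the cover $\{\mathcal S\mathcal H(Q): Q\notin D_m\}$ works equally well.

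One remark: your argument ultimately rests on Lemma~\ref{Shadow-QH distance}, whose finiteness claim is itself a citation of \cite[Theorem~2]{SS}. So what you have written is not a self-contained proof but rather a derivation of Corollary~4 from Theorem~2 of \cite{SS} via the shadow inequality of \cite{JS}---presumably close in spirit to how \cite{SS} obtain it. There is no circularity within the paper, since neither Lemma~\ref{Shadow-QH distance} nor anything else in Section~\ref{Section Holder} uses Lemma~\ref{Holder measure zero} as input.
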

In fact the Hausdorff dimension of $\partial D$ is strictly less than $n$, but we will not need this result.

\section{Sobolev function estimates}\label{Section Sobolev}

For an integrable function $f\colon \R^n\to \R$ we use the notation
$$\mint{-}_Qf \coloneqq  \frac{1}{m_n(Q)}\int_Q f$$ 
for the average of $f$ over a measurable set $Q\subset \R^n$ of finite, non-zero measure.

\begin{lemma}\label{Sobolev Neighbor cubes}
Let $D\subset \R^n$ be an open set, and $f\in W^{1,p}(D)$ for some $1\leq p<\infty$. For all adjacent cubes $Q_1,Q_2\in W(D)$ we have
\begin{align*}
\left| \mint{-}_{Q_1} f - \mint{-}_{Q_2} f\right|\lesssim  \ell(Q_1)\mint{-}_{Q_1} |\nabla f| +\ell(Q_2)\mint{-}_{Q_2}|\nabla f|,
\end{align*} 
where the implicit constant depends only on $n$. 
\end{lemma}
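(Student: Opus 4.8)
The plan is to reduce the inequality to the ordinary $L^1$-Poincar\'e inequality on convex rectangular boxes, using two elementary observations about adjacent Whitney cubes. First, property (iii) of the Whitney decomposition gives $\ell(Q_1)\simeq\ell(Q_2)$, hence $m_n(Q_1)\simeq m_n(Q_2)$, so throughout the argument I may freely interchange $\ell(Q_1)$ with $\ell(Q_2)$, and $m_n(Q_1)$ with $m_n(Q_2)$, at the cost of constants depending only on $n$. Second, since a face of $Q_2$ lies inside a face of $Q_1$ and the two cubes have disjoint interiors, the set $F\coloneqq Q_1\cap Q_2$ is precisely a face of $Q_2$, an $(n-1)$-cube of side $\ell(Q_2)$.

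The one geometric subtlety is that $Q_1\cup Q_2$ need not be convex (it is L-shaped when $\ell(Q_1)>\ell(Q_2)$), so the Poincar\'e inequality does not apply to it directly. To get around this I would slip a small cube inside $Q_1$: in coordinates where the hyperplane through $F$ is $\{x_n=0\}$, write $Q_2=F'\times[0,\ell(Q_2)]$ and $Q_1=P\times[-\ell(Q_1),0]$ with $F=F'\times\{0\}$ and $F'\subseteq P$, and set $Q_1'\coloneqq F'\times[-\ell(Q_2),0]$. Since $\ell(Q_2)\le\ell(Q_1)$ we have $Q_1'\subseteq Q_1$, and $R\coloneqq Q_1'\cup Q_2=F'\times[-\ell(Q_2),\ell(Q_2)]$ is a closed box of dimensions $\ell(Q_2)^{n-1}\times 2\ell(Q_2)$, hence convex, with $R\subseteq Q_1\cup Q_2\subseteq D$, $\diam(R)\simeq\ell(Q_1)$, $m_n(R)\simeq m_n(Q_1)\simeq m_n(Q_2)$, and $Q_1'$ and $Q_2$ of disjoint interiors.

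Writing $f_U\coloneqq\mint{-}_U f$ for the mean of $f$ over a set $U$, the estimate then follows by chaining
\begin{align*}
|f_{Q_1}-f_{Q_2}| \le |f_{Q_1}-f_{Q_1'}| + |f_{Q_1'}-f_R| + |f_R-f_{Q_2}|.
\end{align*}
For the first term, $f_{Q_1}-f_{Q_1'}=\mint{-}_{Q_1'}(f-f_{Q_1})$ and $m_n(Q_1')\simeq m_n(Q_1)$, so it is $\lesssim\mint{-}_{Q_1}|f-f_{Q_1}|$, which by the Poincar\'e inequality on the cube $Q_1$ is $\lesssim\ell(Q_1)\mint{-}_{Q_1}|\nabla f|$. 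For the second and third terms, $Q_1'$ and $Q_2$ both lie in $R$ with $m_n(Q_1')\simeq m_n(Q_2)\simeq m_n(R)$, so each is $\lesssim\mint{-}_R|f-f_R|$, which by the Poincar\'e inequality on the box $R$ (aspect ratio $2$, so the constant depends only on $n$) is $\lesssim\diam(R)\mint{-}_R|\nabla f|\simeq\ell(Q_1)\mint{-}_R|\nabla f|$. Finally, since $R=Q_1'\cup Q_2$ with disjoint interiors and $Q_1'\subseteq Q_1$,
\begin{align*}
\ell(Q_1)\mint{-}_R|\nabla f|
&= \frac{\ell(Q_1)}{m_n(R)}\Bigl(\int_{Q_1'}|\nabla f|+\int_{Q_2}|\nabla f|\Bigr)\\
&\le \frac{\ell(Q_1)}{m_n(R)}\Bigl(\int_{Q_1}|\nabla f|+\int_{Q_2}|\nabla f|\Bigr)\\
&\lesssim \ell(Q_1)\mint{-}_{Q_1}|\nabla f| + \ell(Q_2)\mint{-}_{Q_2}|\nabla f|,
\end{align*}
using $m_n(R)\simeq m_n(Q_1)\simeq m_n(Q_2)$ and $\ell(Q_1)\simeq\ell(Q_2)$ in the last step. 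Adding the three contributions proves the lemma with a constant depending only on $n$.

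I do not expect any real obstacle: the argument is a routine chaining of Poincar\'e inequalities. The only points that need a little care are the construction of the intermediate box $R$ — it must be genuinely convex and still contained in $D$, which is exactly why one works with a small cube $Q_1'$ inside $Q_1$ rather than with $Q_1\cup Q_2$ itself — and the mild bookkeeping that converts means and integrals over $R$ back into means over $Q_1$ and $Q_2$, where the comparability $\ell(Q_1)\simeq\ell(Q_2)$ from Whitney property (iii) is used repeatedly.
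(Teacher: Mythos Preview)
Your proof is correct, but it takes a different route from the paper. The paper argues from first principles: by density it reduces to $f\in C^1(D)$, proves the equal-sidelength case directly from the fundamental theorem of calculus plus Fubini (a translation by $he_1$ pairs up points of $Q_1$ and $Q_2$), and then handles the unequal case by subdividing the larger cube into $2^n$ (or $4^n$) sub-cubes of the smaller sidelength and chaining the equal-size estimate along a short chain of adjacent sub-cubes. Your argument instead invokes the $L^1$-Poincar\'e inequality on convex sets as a black box and avoids the case split entirely by inserting the intermediate box $R=Q_1'\cup Q_2$; the clever point is that $R$ is convex even when $Q_1\cup Q_2$ is not. Your approach is slightly slicker and treats all size ratios uniformly, at the cost of quoting Poincar\'e; the paper's approach is fully self-contained and in effect reproves the needed Poincar\'e inequality by hand. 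Either way the constant depends only on $n$, and the arguments are of comparable length.
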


The proof follows easily from the fundamental theorem of calculus and Fubini's theorem in case the cubes have equal sidelength. We include the proof for the sake of completeness.

\begin{proof}
By density, it suffices to assume that $f\in C^1(D)$. If $Q_1$ and $Q_2$ are cubes of equal sidelength, we assume that $Q_1=[0,h]^n$ and $Q_2=[h,2h]\times [0,h]^{n-1}$.  Then for  $z\in Q_1$ and $e_1\coloneqq (1,0,\dots,0)\in \R^n$ we have
\begin{align*}
|f(z)- f(z+he_1)| \leq \int_0^h |\nabla f(z+te_1)| \, dt.
\end{align*}
Note that $z+ he_1 \in Q_2$. We write $z=(x,y)\in \R \times \R^{n-1}$, and we have
\begin{align*}
\left| \int_{Q_1} f - \int_{Q_2} f \right|&= \left| \int_{Q_1} (f(z)-  f(z+he_1)\,dm_n(z) \right|\\
&\leq \int_{y\in [0,h]^{n-1}} \int_0^h |f(x,y)-f((x,y)+he_1)|\,dm_1(x) dm_{n-1}(y)\\
&\leq \int_{y\in [0,h]^{n-1}}\int_0^h \int_0^h|\nabla f((x,y)+te_1)| \,dt dm_1(x) dm_{n-1}(y)\\
&\leq h \int_{Q_1\cup Q_2} |\nabla f| = \ell(Q_1) \int_{Q_1}|\nabla f| + \ell(Q_2) \int_{Q_2} |\nabla f|.
\end{align*}

Now, if $\ell(Q_1)<\ell(Q_2)$ we either have $\ell(Q_2)=2\ell(Q_1)$ or $\ell(Q_2)=4\ell(Q_1)$. We treat only the first case, since the second is similar. We subdivide $Q_2$ into $2^n$ dyadic cubes $\{\widetilde Q_i\}_{i=1}^{2^n}$ of sidelength equal to $\ell(Q_1)$. Note that for each $\widetilde Q_i$ there exists a chain of distinct cubes $\Delta_0,\dots,\Delta_m$, such that for each $j\geq 1$ the cube $\Delta_j$ is equal to some $\widetilde Q_{k}$, $\Delta_0=Q_1$, $\Delta_m= \widetilde Q_i$, and  $\Delta_j$ is adjacent to $\Delta_{j+1}$. Hence, applying  the estimate for adjacent cubes of equal size $m$ times we obtain
\begin{align*}
\left| \mint{-}_{Q_1}f-\mint{-}_{\widetilde Q_i}f \right| &\leq \ell(Q_1)\mint{-}_{Q_1}|\nabla f| + 2\sum_{j=1}^m \ell(\Delta_j)\mint{-}_{\Delta_j} |\nabla f|\\
&\leq \ell(Q_1)\mint{-}_{Q_1}|\nabla f| + 2^n \ell(Q_2) \mint{-}_{Q_2} |\nabla f|.
\end{align*}
Thus,
\begin{align*}
\left| \mint{-}_{Q_1} f-\mint{-}_{Q_2}f\right|& \leq \sum_{i=1}^{2^n}\frac{1}{2^n}\left| \mint{-}_{Q_1} f-\mint{-}_{\widetilde Q_i}f\right| \\
&\leq \ell(Q_1)\mint{-}_{Q_1}|\nabla f| + 2^n \ell(Q_2) \mint{-}_{Q_2} |\nabla f|. 
\end{align*}
The claim is proved. 
\end{proof}

\begin{prop}\label{Sobolev Holder estimate}
Assume that $D\subset \R^n$ is an $(\alpha,c)$-H\"older domain, and let $f\in W^{1,p}(D)$, with $p>n$. If $f$ extends continuously to $\br D$, then for all $x,y\in \partial D$ we have
\begin{align*}
|f(x)-f(y)|\lesssim \diam(D) \left( \mint{-}_{D} |\nabla f|^p \right)^{1/p}.
\end{align*}
The implicit constant depends only on $\alpha,c,p,n$. 
\end{prop}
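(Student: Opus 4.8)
The plan is to deduce the estimate from the one-sided bound
\[
\Bigl| f(x) - \mint{-}_{Q_{x_0}} f \Bigr| \lesssim \diam(D)\Bigl( \mint{-}_{D} |\nabla f|^p \Bigr)^{1/p},
\]
valid for every $x\in\partial D$, where $Q_{x_0}\in W(D)$ is a fixed Whitney cube containing the basepoint $x_0$. The triangle inequality applied to $x$ and $y$ then gives the proposition, since the term $\mint{-}_{Q_{x_0}} f$ cancels. (The quasihyperbolic geodesics used below terminate at $x_0$, hence pass through $Q_{x_0}$, so the same cube serves both points.)

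To prove the one-sided bound, fix $x\in\partial D$ and, by Lemma \ref{Geodesics to boundary}, let $\gamma$ be a quasihyperbolic geodesic from $x$ to $x_0$. Listing the Whitney cubes met by $\gamma$ in order of traversal from $x_0$ as $Q_0=Q_{x_0},Q_1,Q_2,\dots$, one may arrange (see the last paragraph) that consecutive cubes are adjacent. Since $\delta_D\to 0$ along $\gamma$ near $x$, we have $\ell(Q_j)\to 0$ and $\dist(x,Q_j)\to 0$, so by uniform continuity of $f$ on the compact set $\br D$ we get $\mint{-}_{Q_j} f\to f(x)$. Hence, once the series below is known to converge absolutely,
\[
f(x) - \mint{-}_{Q_0} f = \sum_{j\ge 0}\Bigl( \mint{-}_{Q_{j+1}} f - \mint{-}_{Q_j} f\Bigr).
\]
By Lemma \ref{Sobolev Neighbor cubes} together with Hölder's inequality on each cube, using $\mint{-}_{Q}|\nabla f|\le\bigl(\mint{-}_{Q}|\nabla f|^p\bigr)^{1/p}=\ell(Q)^{-n/p}\bigl(\int_{Q}|\nabla f|^p\bigr)^{1/p}$, the $j$-th term is bounded by a constant (depending on $n$) times $\ell(Q_j)^{1-n/p}\bigl(\int_{Q_j}|\nabla f|^p\bigr)^{1/p}+\ell(Q_{j+1})^{1-n/p}\bigl(\int_{Q_{j+1}}|\nabla f|^p\bigr)^{1/p}$.

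Summing over $j$ (each cube appears at most twice) and applying the discrete Hölder inequality with conjugate exponent $q=p/(p-1)$,
\[
\sum_{j\ge 0}\ell(Q_j)^{1-n/p}\Bigl(\int_{Q_j}|\nabla f|^p\Bigr)^{1/p}\le\Bigl(\sum_{j\ge 0}\ell(Q_j)^{\,q(1-n/p)}\Bigr)^{1/q}\Bigl(\sum_{j\ge 0}\int_{Q_j}|\nabla f|^p\Bigr)^{1/p}.
\]
Since $p>n$, the exponent $\beta\coloneqq q(1-n/p)$ is positive, so Lemma \ref{Holder length estimate} gives $\sum_j\ell(Q_j)^{\beta}\lesssim\delta_D(x_0)^{\beta}$; and since the $Q_j$ are distinct Whitney cubes (the geodesic meets each in a connected subarc), $\sum_j\int_{Q_j}|\nabla f|^p\le\int_D|\nabla f|^p$. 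Combining these, $|f(x)-\mint{-}_{Q_0}f|\lesssim\delta_D(x_0)^{1-n/p}\bigl(\int_D|\nabla f|^p\bigr)^{1/p}$. Finally, Corollary \ref{Holder is Quasiball} gives $\delta_D(x_0)\simeq\diam(D)$ and $m_n(D)\simeq\diam(D)^n$, so $\delta_D(x_0)^{1-n/p}\bigl(\int_D|\nabla f|^p\bigr)^{1/p}\simeq\diam(D)^{1-n/p}m_n(D)^{1/p}\bigl(\mint{-}_D|\nabla f|^p\bigr)^{1/p}\simeq\diam(D)\bigl(\mint{-}_D|\nabla f|^p\bigr)^{1/p}$, which is the one-sided bound; the constants depend only on $\alpha,c,p,n$.

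The main obstacle is the bookkeeping with the chain of Whitney cubes rather than any genuine analytic difficulty. One must justify that the cubes $Q_j$ met by the geodesic can be arranged into a chain of pairwise face-adjacent cubes: if $Q_j$ and $Q_{j+1}$ only touch along a lower-dimensional face, one inserts a bounded number — depending only on $n$ — of intermediate Whitney cubes of comparable sidelength, which affects the estimates by at most an $n$-dependent factor. One also needs that the geodesic meets each Whitney cube in a connected set, so that the cubes in the chain are genuinely distinct and $\sum_j\int_{Q_j}|\nabla f|^p$ telescopes into $\int_D|\nabla f|^p$. With this set up, the chain of inequalities above is routine.
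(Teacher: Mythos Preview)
Your proof is correct and follows essentially the same approach as the paper's: chain along Whitney cubes on a quasihyperbolic geodesic, apply Lemma~\ref{Sobolev Neighbor cubes} termwise, then H\"older and Lemma~\ref{Holder length estimate} with $\beta=(1-n/p)p'>0$. The only cosmetic difference is that the paper concatenates the two geodesics from $x$ and $y$ to $x_0$ into a single bi-infinite chain $\{Q_i\}_{i\in\Z}$ and telescopes directly between $f(x)$ and $f(y)$, whereas you prove the one-sided bound against $\mint{-}_{Q_{x_0}}f$ and combine via the triangle inequality; your bookkeeping discussion about adjacency and possible cube repetition is exactly the detail the paper glosses over when it asserts the existence of such a chain.
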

\begin{proof}
Let $x_0$ be the basepoint of the H\"older domain $D$. Consider  the concatenation $\gamma$ of two quasihyperbolic geodesics: one from $x_0$ to $x$, and one from $x_0$ to $y$. These exist by Lemma \ref{Geodesics to boundary}.  The Whitney cubes intersecting $\gamma$ contain  a bi-infinite chain of adjacent cubes $\{Q_i\}_{i\in \Z} \subset W(D)$, such that $Q_i\to x$ as $i\to -\infty$ and $Q_i\to y$ as $i\to\infty$. By continuity, Lemma \ref{Sobolev Neighbor cubes}, and H\"older's inequality, we have
\begin{align*}
|f(x)- f(y)|&\leq \sum_{i\in \Z} \left| \mint{-}_{Q_i} f - \mint{-}_{Q_{i-1}} f\right| \lesssim \sum_{i\in \Z} \ell(Q_i) \left(\mint{-}_{Q_i} |\nabla f|^p\right)^{1/p}.
\end{align*}
By H\"older's inequality, the latter is bounded above by 
\begin{align*}
\left(\sum_{i\in \Z} \ell(Q_i)^{(1-n/p)p'}\right)^{1/p'} \left( \sum_{i\in \Z} \int_{Q_i} |\nabla f|^p \right)^{1/p}.
\end{align*}
We bound the last sum trivially by $\left(\int_D |\nabla f|^p \right)^{1/p}$. For the first sum we observe that $\beta\coloneqq (1-n/p)p' >0$ and then using Lemma \ref{Holder length estimate} we obtain the bound $\diam(D)^{1-n/p}$. These two terms, combined, yield the result.
\end{proof}

The next proposition is proved exactly as \cite[Proposition 1]{JS}. The statement is different, though, so we include the proof here.

\begin{prop}\label{Sobolev Holder boundary}
Assume that $D\subset \R^n$ is a H\"older domain, and $f\in W^{1,p}(D)$ for some $p\geq n$. If $f$ extends  continuously to $\br D$, then 
\begin{align*}
m_1(f(L\cap \partial D)) =0
\end{align*}
for a.e.\ line $L$ parallel to a fixed direction.
\end{prop}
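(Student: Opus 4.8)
The plan is to show that the image $f(L\cap\partial D)$ of $\partial D$ under $f$, restricted to a line $L$, has measure zero for almost every $L$ by estimating the one-dimensional measure of $f(L\cap\partial D)$ in terms of the quantities $s(Q)$ from Definition \ref{Defintion Shadow}, and then invoking Lemma \ref{Shadow-QH distance} together with Fubini's theorem. First I would fix the direction, say $v_0$, and write $L_v = L_{v_0}+v$ for $v\in\{v_0\}^\perp$. For a fixed line $L=L_v$ meeting $\partial D$, I want to cover $f(L\cap\partial D)$ by finitely many intervals whose total length is controlled. The idea is that two points $x,y\in L\cap\partial D$ that are consecutive along $L$ (i.e.\ the open segment between them misses $\partial D$, hence lies in $D$ or in the exterior of $\br D$) can be joined through $D$ by a chain of adjacent Whitney cubes, using Lemma \ref{Holder Shadows Connections}: if $|x-y|$ is small there are adjacent cubes $Q_x,Q_y$ with $x\in\mathcal{SH}(Q_x)$, $y\in\mathcal{SH}(Q_y)$, and the geodesics from $x$ and from $y$ stay inside cubes of sidelength at most $\varepsilon$. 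Concatenating the geodesic-subpaths and using Lemma \ref{Sobolev Neighbor cubes} and H\"older's inequality along the chain gives
\begin{align*}
|f(x)-f(y)| \lesssim \sum_i \ell(Q_i)^{1-n/p}\Bigl(\int_{Q_i}|\nabla f|^p\Bigr)^{1/p},
\end{align*}
and crucially every cube $Q_i$ in the chain is a cube whose shadow $\mathcal{SH}(Q_i)$ \emph{intersects $L$} (since the geodesics terminating at $x$ and $y$ pass through it, and $x,y\in L$), so $\diam(\mathcal{SH}(Q_i))=s(Q_i)\gtrsim$ the relevant scale on $L$.

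The key step is therefore a bookkeeping argument: enumerate the complementary intervals $\{I_j\}$ of $L\cap\partial D$ in $L$ whose endpoints are in $\partial D$ (these are at most countably many), and for each consecutive pair of boundary points bound the oscillation of $f$ by the cube sums above. Summing over $j$, and using that the Whitney cubes appearing are distinct (or appear with bounded multiplicity), I would get
\begin{align*}
m_1(f(L\cap\partial D)) \lesssim \sum_{\substack{Q\in W(D)\\ \mathcal{SH}(Q)\cap L\neq\emptyset}} \ell(Q)^{1-n/p}\Bigl(\int_{Q}|\nabla f|^p\Bigr)^{1/p}.
\end{align*}
When $p=n$ the exponent $1-n/p$ is zero and this reads $\sum_{Q:\,\mathcal{SH}(Q)\cap L\neq\emptyset}\bigl(\int_Q|\nabla f|^p\bigr)^{1/p}$; for $p>n$ one applies H\"older to split off the geometric factor $\ell(Q)^{1-n/p}$, which is summable over the relevant cubes by Lemma \ref{Holder length estimate}-type bounds. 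In either case, to conclude for a.e.\ $L$ I integrate over $v\in\{v_0\}^\perp$: the set of $v$ for which $\mathcal{SH}(Q)\cap L_v\neq\emptyset$ is contained in the projection of $\mathcal{SH}(Q)$ onto $\{v_0\}^\perp$, which has $(n-1)$-measure $\lesssim s(Q)^{n-1}$. Thus
\begin{align*}
\int m_1\bigl(f(L_v\cap\partial D)\bigr)^{?}\,dm_{n-1}(v) \lesssim \sum_{Q\in W(D)} s(Q)^{n-1}\cdot(\textrm{contribution of }Q),
\end{align*}
and one arranges the exponents (via H\"older in the cube index, weighting $\int_Q|\nabla f|^p$ against $s(Q)^{n-1}$ and $\ell(Q)^{1-n/p}$) so that the total is bounded by a constant times $\bigl(\sum_Q s(Q)^n\bigr)^{a}\bigl(\int_D|\nabla f|^p\bigr)^{b}$, which is finite by Lemma \ref{Shadow-QH distance} and $f\in W^{1,p}(D)$. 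Hence the integrand is finite a.e., and in fact once one has an $L^1$-type bound on a suitable power of $m_1(f(L_v\cap\partial D))$ one upgrades to $m_1(f(L_v\cap\partial D))=0$ a.e.\ by a limiting/refinement argument: replacing $\partial D$ by its intersection with a shrinking neighborhood, the relevant cube sums tail off to zero, forcing the measure to vanish.

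The main obstacle I anticipate is the combinatorial control of the chains and the multiplicity with which a given Whitney cube is used when summing over all complementary intervals $I_j$ of $L\cap\partial D$ simultaneously — that is, ensuring the final sum really is over distinct cubes (or cubes with uniformly bounded overlap) whose shadows meet $L$, rather than an uncontrolled over-count. This is exactly the place where Lemma \ref{Holder Shadows Connections} does the work, by forcing the connecting geodesic-subpaths near $x$ and $y$ to live among small cubes and to be essentially nested according to the ordering of points along $L$; the subtlety is to organize the intervals $I_j$ (as in the greedy-grouping device used in the proof of Proposition \ref{Detour Zero measure}) so that distinct groups use disjoint families of cubes. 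A secondary technical point is the endpoint case $p=n$, where there is no summable geometric weight $\ell(Q)^{1-n/p}$ to play with, so one must instead exploit the convergence $\sum_Q s(Q)^n<\infty$ directly and an $\ell^n$–$\ell^{n'}$ H\"older split between $s(Q)$ and the energy densities; this is presumably why the statement only claims a \emph{measure-zero} conclusion (a qualitative statement) rather than a quantitative modulus-of-continuity bound.
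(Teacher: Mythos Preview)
Your overall architecture is right --- Lemma \ref{Holder Shadows Connections} to produce small-cube chains, Lemma \ref{Sobolev Neighbor cubes} along the chains, integrating over $v\in\{v_0\}^\perp$ to turn ``$\mathcal{SH}(Q)\cap L\neq\emptyset$'' into the factor $s(Q)^{n-1}$, and then Lemma \ref{Shadow-QH distance} for summability --- and this is exactly what the paper does. But there is a genuine gap in how you propose to cover $f(L\cap\partial D)$.

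You organize the estimate by enumerating the complementary intervals $I_j=(a_j,b_j)$ of $L\cap\partial D$ in $L$ and summing $|f(a_j)-f(b_j)|$. This sum does \emph{not} dominate $m_1(f(L\cap\partial D))$. The set $L\cap\partial D$ can be a Cantor set, and controlling the jumps of $f$ across the gaps says nothing about how $f$ varies on the Cantor set itself. Concretely: if $L\cap\partial D$ were the middle-thirds Cantor set and $f$ the Cantor staircase, then $f(a_j)=f(b_j)$ for every complementary interval, so your sum is zero, yet $m_1(f(L\cap\partial D))=1$. The complementary intervals simply do not cover $L\cap\partial D$, so summing oscillations over them cannot produce a covering of the image.

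The paper's fix is to replace the complementary-interval enumeration by a \emph{covering} of $L\cap\partial D$: for each $x\in L\cap\partial D$ use Lemma \ref{Holder Shadows Connections} to get a ball $B(x,r)$ on which any $y\in B(x,r)\cap\partial D$ can be joined to $x$ through cubes of sidelength $\le\varepsilon$; extract a finite subcover $\{B(x_i,r_i)\}$ by compactness; then bound $\diam\bigl(f(L\cap\partial D\cap B(x_i,r_i))\bigr)$ by the cube sum along the connecting path $\gamma_i$. Now $m_1(f(L\cap\partial D))\le\sum_i\diam(\cdot)$ is legitimate because the balls actually cover $L\cap\partial D$. The greedy concatenation you mention is then applied to the paths $\gamma_i$, not to complementary intervals, and yields the single sum $\sum_{\mathcal{SH}(Q)\cap L\neq\emptyset,\ \ell(Q)\le\varepsilon}\ell(Q)\mint{-}_Q|\nabla f|$. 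One integrates over $v$ \emph{before} applying H\"older (your early H\"older on each chain is harmless but makes the endpoint $p=n$ bookkeeping messier); the resulting convergent series over $Q$ with $\ell(Q)\le\varepsilon$ then tails to zero as $\varepsilon\to 0$, giving the qualitative conclusion.
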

\begin{proof}
We fix a line $L$ parallel to a direction $v_0\in \R^n$. We fix $\varepsilon>0$, and for each $x\in L\cap \partial D$ we consider a ball $B(x,r)$ such that the conclusion of Lemma \ref{Holder Shadows Connections} holds. By the compactness of $L\cap \partial D$ we only need finitely many such balls $B(x_i,r_i)$, $i=1,\dots,N$, to cover $L\cap \partial D$. We have
\begin{align}\label{Sobolev Holder boundary H1 bound}
m_1(f(L\cap \partial D)) = m_1 ( \bigcup_{j=1}^N f(L\cap \partial D \cap B(x_i,r_i))),
\end{align}
which we will bound soon by a diameter bound.

Let $i\in \{1,\dots,N\}$ and $y_i\in L\cap \partial D \cap B(x_i,r_i)$. By Lemma \ref{Holder Shadows Connections} we can connect $y_i$ to $x_i$ by a path $\gamma_i$ that that is a concatenation of two quasihyperbolic geodesics with a segment inside two adjacent Whitney cubes, with the property that $\gamma_i$ intersects only Whitney cubes of sidelength at most $\varepsilon$. By continuity and Lemma \ref{Sobolev Neighbor cubes}, we have the estimate 
\begin{align*}
|f(x_i)-f(y_i)|\lesssim \sum_{\substack {Q\in W(D)\\ Q\cap \gamma_i \neq\emptyset}}\ell(Q) \mint{-}_Q |\nabla f|,
\end{align*}
and thus, by choosing a suitable point $y_i$, we have
\begin{align*}
\diam( f(L\cap \partial D \cap B(x_i,r_i))) \lesssim \sum_{\substack {Q\in W(D)\\ Q\cap \gamma_i \neq\emptyset}}\ell(Q) \mint{-}_Q |\nabla f|.
\end{align*}

In case there exist $i,j\in \{1,\dots,N\}$ with $i\neq j$ such that $\gamma_i$ and $\gamma_j$ intersect some common Whitney cube, we concatenate them inside the cube to obtain the better bound
\begin{align*}
\diam( f( L\cap \partial D \cap (B(x_i,r_i)\cup B(x_j,r_j)))) \lesssim \sum_{\substack {Q\in W(D)\\ Q\cap (\gamma_i\cup \gamma_j) \neq\emptyset}}\ell(Q) \mint{-}_Q |\nabla f|.
\end{align*}

By doing finitely many concatenations (e.g.\ as in the construction of the sets $A_i$ in the proof of Proposition \ref{Detour Zero measure}) we obtain a new family of paths $ \gamma_i'$, $i=1,\dots,N'$, that intersect disjoint sets of Whitney cubes, and \eqref{Sobolev Holder boundary H1 bound} implies
\begin{align*}
m_1(f(L\cap \partial D)) \lesssim \sum_{i=1}^{N'} \sum_{\substack {Q\in W(D)\\ Q\cap \gamma_i' \neq\emptyset}}\ell(Q) \mint{-}_Q |\nabla f|.
\end{align*}
Note that if $Q\cap \gamma_i'\neq \emptyset$ then $SH(Q)\cap L \neq \emptyset$ and $\ell(Q)\leq \varepsilon$ (by Lemma \ref{Holder Shadows Connections}). Since no cube is used twice in the above sums, we obtain
\begin{align*}
m_1(f(L\cap \partial D)) \lesssim \sum_{\substack{Q: SH(Q)\cap L\neq \emptyset\\ \ell(Q)\leq \varepsilon}} \ell(Q) \mint{-}_Q |\nabla f|.
\end{align*}

Now, we integrate over all lines $L=L_v\coloneqq L_{v_0} +v$, $v\in \{v_0\}^\perp\simeq \R^{n-1}$, and we have by Fubini's theorem
\begin{align*}
\int m_1(f(L_v\cap \partial D)) \, d&m_{n-1}(v)\\
& \lesssim \int \left(\sum_{\substack{Q: SH(Q)\cap L_v\neq \emptyset\\ \ell(Q)\leq \varepsilon}} \ell(Q) \mint{-}_Q |\nabla f| \right) \, dm_{n-1}(v)\\
&\simeq\sum_{Q: \ell(Q)\leq \varepsilon} \ell(Q) \left(\mint{-}_Q |\nabla f|\right) \int_{SH(Q) \cap L_v\neq \emptyset} dm_{n-1}(v)\\
&\lesssim \sum_{Q: \ell(Q)\leq \varepsilon} \ell(Q) \mint{-}_Q |\nabla f|s(Q)^{n-1}\\
&\lesssim \sum_{Q:\ell(Q)\leq \varepsilon} \ell(Q)s(Q)^{n-1} \left( \mint{-}_Q |\nabla f|^p\right)^{1/p}\\
&\lesssim \left(\int_D |\nabla f|^p\right)^{1/p} \Bigg( \sum_{Q:\ell(Q)\leq \varepsilon} \ell(Q)^{(1-n/p)p'} s(Q)^{(n-1)p'} \Bigg)^{1/p'},
\end{align*}
where in the last step we applied H\"older's inequality for $\frac{1}{p}+\frac{1}{p'}=1$. 

In order to let $\varepsilon\to 0$ and obtain the conclusion, it suffices to have
\begin{align*}
\sum_{Q\in W(D)}  \left(\frac{s(Q)}{\ell(Q)} \right)^{(n-1)p'}\ell(Q)^n <\infty,
\end{align*}
or, equivalently, $g \coloneqq \sum_{Q\in W(D)} \frac{s(Q)}{\ell(Q)}\x_Q \in L^{p'(n-1)}(D)$. Note that $g\in L^n(D)$ by Lemma \ref{Shadow-QH distance}. Furthermore, $p'(n-1)\leq n$ for $p\geq n$. Hence, the boundedness of $D$ from Corollary \ref{Holder is Quasiball} and H\"older's inequality yield the conclusion.
\end{proof}

\section{Main Theorem}\label{Section Main Theorem}

In this section we give the proof of the Main Theorem.

\begin{definition}\label{Main Holder Detour}
Let $K\subset \R^n$ be a compact set. We say that $K$ is a \textit{H\"older detour set} if $K$ is a detour set and there exist $\alpha,c>0$ such that its bounded complementary components $\{ \Omega_k\}_{k\geq 1}$ are $(\alpha,c)$-H\"older domains, and for the unbounded component $\Omega_0$ there exists a large ball $B(0,R)\supset \partial \Omega_0$ such that $\Omega_0\cap B(0,R)$ is an $(\alpha,c)$-H\"older domain. 
\end{definition}

\begin{lemma}\label{Main Detour Measure}
Let $K\subset \R^n$ be a H\"older detour set. Then $m_n(K)=0$, and $\diam(\Omega_k)\to 0$ as $k\to\infty$.
\end{lemma}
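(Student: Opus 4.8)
The plan is to reduce everything to Proposition \ref{Detour Zero measure}, whose two hypotheses — that $m_n(\partial\Omega_k)=0$ for all $k$, and that $\sum_{k\geq 1}\diam(\Omega_k)^n<\infty$ — I would verify in turn, and then read off both conclusions.

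First I would check the boundary condition. For the bounded components $\Omega_k$, $k\geq 1$, these are $(\alpha,c)$-H\"older domains, so $m_n(\partial\Omega_k)=0$ is immediate from Lemma \ref{Holder measure zero}. For the unbounded component $\Omega_0$, we are given a ball $B(0,R)\supset\partial\Omega_0$ such that $\Omega_0\cap B(0,R)$ is an $(\alpha,c)$-H\"older domain. A short topological argument shows $\partial\Omega_0\subset\partial(\Omega_0\cap B(0,R))$: any $x\in\partial\Omega_0$ lies in the open ball $B(0,R)$ and in $\overline{\Omega_0}\setminus\Omega_0$, so every small neighbourhood of $x$ meets $\Omega_0\cap B(0,R)$ while $x$ itself does not. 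Applying Lemma \ref{Holder measure zero} to $\Omega_0\cap B(0,R)$ then gives $m_n(\partial\Omega_0)=0$.

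Next I would establish the summability. Since $K$ is compact it is contained in some ball $B(0,R_0)$; every point outside $B(0,R_0)$ then lies in the unbounded component $\Omega_0$, so all the bounded components $\Omega_k$, $k\geq 1$, are contained in $B(0,R_0)$, and they are pairwise disjoint. By Corollary \ref{Holder is Quasiball}, applied with the \emph{uniform} H\"older constants $\alpha,c$, we have $\diam(\Omega_k)^n\simeq m_n(\Omega_k)$ with implicit constant depending only on $\alpha,c,n$; summing,
\[
\sum_{k\geq 1}\diam(\Omega_k)^n\lesssim\sum_{k\geq 1}m_n(\Omega_k)\leq m_n(B(0,R_0))<\infty .
\]

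With both hypotheses verified, Proposition \ref{Detour Zero measure} gives $m_n(K)=0$. Finally, since a detour set has infinitely many complementary components, the series $\sum_{k\geq 1}\diam(\Omega_k)^n$ is a convergent series with infinitely many terms, so its terms tend to $0$; hence $\diam(\Omega_k)\to 0$ as $k\to\infty$. I do not expect a serious obstacle here: the only delicate points are the inclusion $\partial\Omega_0\subset\partial(\Omega_0\cap B(0,R))$ and the uniformity of the implicit constant in Corollary \ref{Holder is Quasiball}, which is precisely why that corollary was stated with explicit dependence on $(\alpha,c,n)$.
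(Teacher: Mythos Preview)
Your proposal is correct and follows essentially the same route as the paper: verify the two hypotheses of Proposition \ref{Detour Zero measure} via Lemma \ref{Holder measure zero} and Corollary \ref{Holder is Quasiball}, then read off both conclusions. You are in fact slightly more thorough than the paper, which glosses over the $\partial\Omega_0\subset\partial(\Omega_0\cap B(0,R))$ step and does not explicitly spell out why $\diam(\Omega_k)\to 0$ follows from convergence of the series.
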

\begin{proof}
By Proposition \ref{Detour Zero measure}, it suffices to have that $m_n(\partial\Omega_k)=0$ for all $k\in \N\cup \{0\}$, and that $\sum_{k=1}^\infty \diam(\Omega_k)^n <\infty$. The first follows from Lemma \ref{Holder measure zero}, and the second from Corollary \ref{Holder is Quasiball}, which implies that 
\begin{align*}
\sum_{k=1}^\infty \diam(\Omega_k)^n\simeq \sum_{k=1}^\infty m_n(\Omega_k) \leq m_n(\R^n \setminus \Omega_0)<\infty,
\end{align*}
as $\R^n\setminus \Omega_0$ is bounded.
\end{proof}

We now restate the Main Theorem.

\begin{theorem}\label{Main Theorem}
Let $K\subset \R^n$ be a H\"older detour set, and let $p>n$. Then any continuous function $f\colon \R^n\to \R$ that lies in $ W^{1,p}(\R^n\setminus K)$, also lies in $W^{1,p}(\R^n)$.
\end{theorem}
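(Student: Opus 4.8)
Here is the approach I would take. Since $W^{1,p}$-removability is a local property and $K$ is compact, after multiplying $f$ by a smooth cutoff we may assume $f$ is compactly supported, so that $f\in L^p(\R^n)$; and since $m_n(K)=0$ by Lemma \ref{Main Detour Measure}, the gradient $g=\nabla f$ of $f\in W^{1,p}(\R^n\setminus K)$, defined $m_n$-a.e., already lies in $L^p(\R^n)$. It thus remains to verify the identity $\int_{\R^n}f\,\partial_i\phi=-\int_{\R^n}g_i\,\phi$ for all $\phi\in C_c^\infty(\R^n)$, which is already known when $\supp\phi\subset\R^n\setminus K$. A linear change of coordinates is bi-Lipschitz and preserves all the hypotheses --- uniform H\"older domains go to uniform H\"older domains, the detour directions transform into new linearly independent directions, and $m_n(K)=0$ is preserved --- so we may assume the $n$ detour directions are the coordinate directions $e_1,\dots,e_n$. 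By the ACL characterization of Sobolev functions together with Fubini's theorem, the identity will follow once we show, for each $i$ and almost every line $L_v=\{v+te_i:t\in\R\}$, $v\in\{e_i\}^\perp$, that $f|_{L_v}$ is absolutely continuous with derivative $g_i$; since $g_i$ is locally integrable along a.e.\ such line, this is equivalent to showing that
\[
\Phi_v(t):=f(v+te_i)-\int_0^t g_i(v+se_i)\,ds
\]
is constant in $t$, for a.e.\ $v$.

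Fix $i$ and let $v$ range over the full-measure set of directions for which $L_v$ has the detour property, $m_1(L_v\cap K)=0$, and $f|_{L_v}$ is, off $L_v\cap K$, locally absolutely continuous with derivative $g_i$. Then $\Phi_v$ is continuous, and on each component interval of $L_v\setminus K$ --- which is contained in a single complementary component $\Omega_k$, because the $\Omega_k$ are disjoint open sets --- $\Phi_v$ is constant. Hence $\Phi_v(L_v\setminus K)$ is countable; since $\Phi_v(L_v)$ is an interval (a continuous image of a connected set), $\Phi_v$ will be constant as soon as $m_1(\Phi_v(L_v\cap K))=0$. Writing $L_v\cap K=\bigl(\bigcup_k L_v\cap\partial\Omega_k\bigr)\cup H_v$ with $H_v:=L_v\setminus\bigcup_k\br\Omega_k$, I would bound the image of each piece for a.e.\ $v$. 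The boundary pieces are handled by re-running the proof of Proposition \ref{Sobolev Holder boundary} --- applied to $f|_{\Omega_k}$ and to $f|_{\Omega_0\cap B(0,R)}$, each defined on a H\"older domain --- but with $f$ replaced by $\Phi_v$: besides the term already shown there to tend to $0$ as the relevant Whitney cubes shrink, this produces an extra term of the form $\int_{L_v\cap N}|g_i|$, where $N$ is a neighborhood of $\partial\Omega_k$; integrating over $v$ this is controlled by $\int_N|g_i|$, which tends to $0$ as $N$ shrinks because $m_n(\partial\Omega_k)=0$ by Lemma \ref{Holder measure zero}. This gives $m_1(\Phi_v(L_v\cap\partial\Omega_k))=0$ for a.e.\ $v$ and each fixed $k$, and summing over the countably many $k$ disposes of the boundary part.

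The hidden part $H_v$ is the heart of the proof, and is where the detour property and the hypothesis $p>n$ are used. Mimicking the proof of Proposition \ref{Detour Zero measure}, I would fix a finite $Z\ni 0$, cover $H_v$ by the component intervals $I$ of $L_v\setminus\bigcup_{k\in Z}\br\Omega_k$ that meet $K$, and attach to each such $I$ a detour path $\gamma_I$ meeting only finitely many $\br\Omega_k$, $k\notin Z$, all of them met by $L_v$. Using the uniform continuity of $f$ to match the endpoints of $\gamma_I$ with the nearby points of $L_v$, the oscillation bound $\osc_{\br\Omega_k}f\lesssim\diam(\Omega_k)^{1-n/p}\bigl(\int_{\Omega_k}|\nabla f|^p\bigr)^{1/p}$ --- which follows from the proof of Proposition \ref{Sobolev Holder estimate} with a single quasihyperbolic geodesic, requires $p>n$, and has constants uniform in $k$ by the uniform H\"older hypothesis together with Lemma \ref{Holder is Quasiball} --- and the trivial bound $\int_{\br I}|g_i|$ for the line-integral part of $\Phi_v$, a grouping of the intervals as in Proposition \ref{Detour Zero measure} yields, after letting the approximation parameters tend to zero,
\begin{align*}
m_1\bigl(\Phi_v(H_v)\bigr)\ \lesssim\ \sum_{\substack{k\notin Z\\ \br\Omega_k\cap L_v\neq\emptyset}}\diam(\Omega_k)^{1-n/p}\Bigl(\int_{\Omega_k}|\nabla f|^p\Bigr)^{1/p}\ +\ \int_{E_{v,Z}}|g_i|,
\end{align*}
where $E_{v,Z}\subset L_v$ decreases to a subset of $K\cap L_v$ as $Z\uparrow\N\cup\{0\}$. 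Integrating over $v\in\{e_i\}^\perp$, using $m_{n-1}(\{v:\br\Omega_k\cap L_v\neq\emptyset\})\lesssim\diam(\Omega_k)^{n-1}$ and H\"older's inequality with the identity $(n-n/p)p'=n$, the first sum contributes $\lesssim\bigl(\sum_{k\notin Z}\diam(\Omega_k)^n\bigr)^{1/p'}\|\nabla f\|_{L^p(\R^n)}$ and the second contributes $\int_{E_Z}|g_i|$ with $E_Z\subset\R^n$ decreasing to a subset of $K$; since $\sum_k\diam(\Omega_k)^n<\infty$ and $m_n(K)=0$ by Lemma \ref{Main Detour Measure}, both contributions vanish as $Z\uparrow\N\cup\{0\}$. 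Thus $m_1(\Phi_v(H_v))=0$ for a.e.\ $v$, hence $m_1(\Phi_v(L_v\cap K))=0$, hence $\Phi_v$ is constant, for a.e.\ $v$, which is the desired conclusion.

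The step I expect to be the main obstacle is this hidden-part estimate. One has good control on the oscillation of $f$ along a detour path in $\R^n$, but must convert it into control of the function $\Phi_v$, which lives only on the line $L_v$ and carries the extra line integral of $g_i$; the careful matching of the near-endpoint pieces via uniform continuity, and the bookkeeping of the integral remainders so that they too vanish in the limit $Z\uparrow\N\cup\{0\}$ (using $m_n(K)=0$), is the delicate point. Note also that one really must integrate over the whole family of parallel lines rather than argue line by line: on a single line there is no control of $\sum_{k:\br\Omega_k\cap L_v\neq\emptyset}\osc_{\br\Omega_k}f$ (this would force the borderline-divergent $\sum_k\diam(\Omega_k)^{(p-n)/(p-1)}<\infty$), and it is only the weight $\diam(\Omega_k)^{n-1}$ coming from the measure of directions that improves the exponent to the convergent $n$. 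Finally, $p>n$ enters exactly through Proposition \ref{Sobolev Holder estimate}: its exponent $1-n/p$ must be positive, and with $p=n$ the oscillation estimate fails, which is why the method does not reach $W^{1,n}$-removability.
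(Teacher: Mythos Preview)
Your proof is essentially the paper's: the same ACL reduction along the $n$ detour directions, the same split of $L_v\cap K$ into the boundary pieces $\bigcup_k L_v\cap\partial\Omega_k$ (handled via Proposition~\ref{Sobolev Holder boundary}) and the hidden part $H_v$ (handled by the detour--grouping argument of Proposition~\ref{Detour Zero measure} combined with the oscillation bound of Proposition~\ref{Sobolev Holder estimate}), and the same integration over $v\in\{v_0\}^\perp$ together with H\"older's inequality, which is exactly where $p>n$ enters. Your closing remarks about why line-by-line control fails and why $p=n$ is out of reach match the paper's.

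The only substantive difference is packaging. The paper proves $m_1\bigl(f(L_v\cap K)\bigr)=0$ for a.e.\ $v$ and then invokes the Banach--Zaretsky type Lemma~\ref{Main Theorem Basic Lemma} to get absolute continuity of $f|_{L_v}$. You instead carry the auxiliary function $\Phi_v=f|_{L_v}-\int_0^{\,\cdot} g_i$ through all the estimates and aim for $m_1\bigl(\Phi_v(L_v\cap K)\bigr)=0$. These two targets are equivalent under the standing hypotheses, but yours forces an extra line-integral remainder $\int|g_i|$ into every diameter bound. That is where your sketch is loosest: after the grouping into sets $A_i$, the $\Phi_v$-diameter of a group is bounded by the $f$-diameter plus $\int_{[z_i,w_i]}|g_i|$, where $[z_i,w_i]$ is the \emph{convex hull} on $L_v$ of the intervals in $A_i$. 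Nothing prevents distinct groups from interleaving along $L_v$, so these convex hulls can overlap, and it is not clear that their union is the set $E_{v,Z}$ you describe, monotonically shrinking to a subset of $K\cap L_v$; the ungrouped union $\bigcup_j I_j$ does have that monotonicity, but if you avoid grouping for the remainder you lose it for the $f$-term. This is not fatal and can be repaired with more bookkeeping, but the paper's route sidesteps it entirely: run exactly the $f$-part of your estimates to get $m_1\bigl(f(L_v\cap K)\bigr)=0$, then apply Lemma~\ref{Main Theorem Basic Lemma}. No integral remainder ever appears, and the ``delicate point'' you anticipate simply does not arise.
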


By the remarks prior to Definition \ref{Intro Def Removability}, it suffices to prove that $f\in W^{1,p}(U)$, where $U\coloneqq B(0,R)$ is some large ball containing $K$.

Before starting the proof, we note that $f\in W^{1,p}(U)$ if and only if it has a representative that we still denote by $f$ such that:
\begin{enumerate}[\upshape(i)]
\item $f\in L^p(U)$,
\item there exist directions $v_1,\dots,v_n$ that span $\R^n$ such that for each direction $v_i$ the function $f$ is absolutely continuous in almost every line $L$ parallel to $v_i$, and
\item the partial derivative $\partial_{v_i}f$ of $f$ in the direction of $v_i$ lies in $L^p(U)$, for $i=1,\dots,n$.
\end{enumerate}
This characterization is known as Nikodym's theorem; see \cite[Section 1.1.3]{Maz}. The absolute continuity in (ii) is interpreted in the local sense, namely every point $x\in L\cap U$ has an open neighborhood in $L\cap U$ on which $f$ is absolutely continuous. If $f$ is continuous, then the above hold without the need to pass to another representative of $f$.

Assume that $K$ is as in the theorem. Note that $m_n(K)=0$ by Lemma \ref{Main Detour Measure}. Thus, in order to prove the theorem, it suffices to ``upgrade" the absolute continuity on lines $L\cap (U\setminus K)$ to absolute continuity on lines $L\cap U$. For this we will use the following basic lemma:

\begin{lemma}\label{Main Theorem Basic Lemma}
Let $f\colon (0,1)\to \R$ be a continuous function, and let $A\subset (0,1)$ be a compact set. Assume that $f$ is absolutely continuous in every interval $I\subset (0,1)\setminus A$, and that $\int_{(0,1)\setminus A} |f'| <\infty$. If $m_1(f(A))=0$, then $f$ is absolutely continuous on $(0,1)$. If $(0,1)$ is replaced by an open set $U\subset \R$ but the other assumptions are unchanged, then the conclusion is that $f$ is absolutely continuous on every component of $U$.
\end{lemma}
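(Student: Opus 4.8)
The plan is to prove the first statement; the modification to an open subset $U\subset\R$ follows by applying the result component-by-component. The core idea is the standard characterization: a continuous function $f\colon(0,1)\to\R$ is absolutely continuous if and only if it has bounded variation, maps null sets to null sets (the Luzin (N) property), and is differentiable a.e.\ with the right integral formula; but it is cleaner to argue directly via the definition of absolute continuity, using a Vitali-type covering argument. So, fix $\varepsilon>0$; I want to produce $\delta>0$ such that for any finite collection of pairwise disjoint intervals $(a_j,b_j)\subset(0,1)$ with $\sum_j(b_j-a_j)<\delta$ we have $\sum_j|f(b_j)-f(a_j)|<\varepsilon$.

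First I would use the hypothesis $\int_{(0,1)\setminus A}|f'|<\infty$ together with absolute continuity of the (Lebesgue) integral: choose $\delta_1>0$ so that $\int_E|f'|<\varepsilon/3$ whenever $E\subset(0,1)\setminus A$ is measurable with $m_1(E)<\delta_1$. Second, I would use $m_1(f(A))=0$: cover $f(A)$ by an open set $V\subset\R$ with $m_1(V)<\varepsilon/3$. Since $f$ is continuous, $f^{-1}(V)$ is open and contains the compact set $A$, so there is $\delta_2>0$ with the $\delta_2$-neighborhood of $A$ in $(0,1)$ contained in $f^{-1}(V)$. Now set $\delta=\min(\delta_1,\delta_2)$ and take disjoint intervals $(a_j,b_j)$ with total length $<\delta$. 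Split the index set: let $\mathcal{I}_1$ be those $j$ with $(a_j,b_j)\cap A=\emptyset$, and $\mathcal{I}_2$ the rest. For $j\in\mathcal{I}_1$, the interval lies in $(0,1)\setminus A$ (after possibly shrinking endpoints, using continuity, since $A$ is closed — actually if $(a_j,b_j)$ misses $A$ then $[a_j,b_j]$ meets $A$ only possibly at endpoints, and by continuity $|f(b_j)-f(a_j)|=\lim|f(b_j')-f(a_j')|$ over $[a_j',b_j']\subset(a_j,b_j)$), so by absolute continuity of $f$ on $(a_j,b_j)$, $|f(b_j)-f(a_j)|\le\int_{a_j}^{b_j}|f'|$, and summing over $\mathcal{I}_1$ gives at most $\int_{\bigcup_{\mathcal I_1}(a_j,b_j)}|f'|<\varepsilon/3$ since the total length is $<\delta\le\delta_1$.

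For $j\in\mathcal{I}_2$ the interval $(a_j,b_j)$ meets $A$, hence lies within the $\delta_2$-neighborhood of $A$ (its length is $<\delta\le\delta_2$), so $f([a_j,b_j])\subset V$; but $f([a_j,b_j])$ is an interval (continuous image of a connected set), so $m_1(f([a_j,b_j]))\ge|f(b_j)-f(a_j)|$, and since the sets $f([a_j,b_j])$ — wait, these need not be disjoint. To handle overlaps I would instead note $\sum_{j\in\mathcal I_2}|f(b_j)-f(a_j)|$ is not directly controlled by $m_1(V)$; the fix is a standard one: refine the collection $\{[a_j,b_j]\}_{j\in\mathcal I_2}$ further so that on each piece $f$ is monotone is too much to ask, so instead I would replace the endpoint-difference bound by the oscillation: cover $A$ by finitely many small intervals on which $\osc f$ is small — more precisely, since $A\subset f^{-1}(V)$ and $f^{-1}(V)$ is open, and $A$ is compact, write $f^{-1}(V)\cap(\text{nbhd of }A)$ as a countable disjoint union of open intervals $(c_m,d_m)$; each $(a_j,b_j)$ with $j\in\mathcal I_2$ is contained in some $(c_m,d_m)$, and $f((c_m,d_m))$ is an interval contained in $V$. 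Then $\sum_{j\in\mathcal I_2}|f(b_j)-f(a_j)|\le\sum_m\big(\sup_{(c_m,d_m)}f-\inf_{(c_m,d_m)}f\big)=\sum_m m_1(f((c_m,d_m)))\le\sum_m m_1(\overline{f((c_m,d_m))})$; the closures $\overline{f((c_m,d_m))}$ are intervals whose interiors — hmm, still possibly overlapping across $m$.

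I expect the bookkeeping of overlaps to be the one genuinely fiddly point, and the clean way around it is: since the intervals $(c_m,d_m)$ are pairwise disjoint and $A$ is compact, only finitely many, say $(c_1,d_1),\dots,(c_M,d_M)$, are needed to cover the relevant $(a_j,b_j)$'s; discarding the rest and using that $\bigcup_{m=1}^M f((c_m,d_m))\subset V$ with $m_1(V)<\varepsilon/3$ does \emph{not} immediately bound $\sum_m m_1(f((c_m,d_m)))$ because of overlaps — so instead I would choose $V$ from the start to be a \emph{finite} disjoint union of open intervals (possible since $m_1(f(A))=0$ and $f(A)$ compact: cover by finitely many tiny intervals), pull back, and intersect the $(c_m,d_m)$ decomposition with the components of $V$-preimage structure, ensuring the images $f((c_m,d_m))$ are each contained in a single component interval of $V$; then for each fixed component $W$ of $V$, $\sum_{m:f((c_m,d_m))\subset W}|f(b_j)-f(a_j)|$ over the $j$'s inside those $(c_m,d_m)$ is at most $m_1(W)$ (the endpoint values all lie in the interval $W$ and the intervals $(a_j,b_j)$ are disjoint, but their images can still overlap). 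The truly robust argument is: $\sum_{j\in\mathcal I_2}|f(b_j)-f(a_j)|\le\sum_{j\in\mathcal I_2}\osc_{[a_j,b_j]}f$, and since the $[a_j,b_j]$ are disjoint and each lies in $f^{-1}(V)$, we may apply the same reasoning recursively — but at this level of detail I would simply invoke the classical lemma that a continuous function sending the complement of a closed set $A$ to an AC function with integrable derivative, and sending $A$ to a null set, is AC (this is e.g.\ the content of the Banach–Zarecki theorem combined with $N$-property checking), citing it if a reference is acceptable; otherwise the overlap issue is resolved by passing to a \emph{minimal} finite subcover and using $m_1\big(\bigcup_m f((c_m,d_m))\big)\le m_1(V)<\varepsilon/3$ after first \emph{merging} any two $(c_m,d_m)$ whose images overlap into a single interval (whose image is then still an interval in $V$, by connectedness), which after finitely many merges yields a disjoint family of image-intervals, giving $\sum_{j\in\mathcal I_2}|f(b_j)-f(a_j)|\le\sum(\text{disjoint intervals in }V)\le m_1(V)<\varepsilon/3$. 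Combining the two index sets, $\sum_j|f(b_j)-f(a_j)|<\varepsilon/3+\varepsilon/3<\varepsilon$, which is the desired absolute continuity. For the final sentence of the lemma, apply the above to each connected component of $U\subset\R$ separately, noting all hypotheses restrict to each component.
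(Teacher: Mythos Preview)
The paper itself gives no detailed proof here: it simply asserts the lemma follows from the definition of absolute continuity or from the Banach--Zaretsky theorem and cites \cite{BC}. So your suggestion to fall back on Banach--Zaretsky is exactly in line with the paper. However, your direct argument from the definition has a genuine gap, and Banach--Zaretsky is not free either: you still owe bounded variation.

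The gap is in the $\mathcal I_2$ case. Your bound for a single $j\in\mathcal I_2$, namely $|f(b_j)-f(a_j)|\le m_1(f([a_j,b_j]))$, is fine, but the images $f([a_j,b_j])$ can overlap, and your ``merging'' fix does not repair this. Merging two disjoint domain intervals $(c_m,d_m)$ and $(c_{m'},d_{m'})$ does not produce an interval; and even if you track only the images, after merging into groups with disjoint image intervals there is nothing to stop \emph{many} of the test intervals $(a_j,b_j)$ from sitting inside the same group, so $\sum_{j\in\mathcal I_2}|f(b_j)-f(a_j)|$ can vastly exceed $m_1(V)$. Nothing in your setup bounds the number of test intervals per group, and $\delta$ must not depend on that number.

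The clean route is the one you half-invoke. The Luzin $(N)$ property is immediate: if $m_1(E)=0$ then $f(E\cap A)\subset f(A)$ is null and $f(E\setminus A)$ is null since $f$ is AC on each component of $(0,1)\setminus A$. The missing ingredient is bounded variation, and this is where the hypotheses actually bite. By Banach's indicatrix theorem, for any $[a,b]\subset(0,1)$,
\[
V_a^b(f)=\int_{\R} N_{[a,b]}(y)\,dy,\qquad N_{[a,b]}(y)=\#\,f^{-1}(y)\cap[a,b].
\]
For $y\notin f(A)$ every preimage lies in $[a,b]\setminus A$, so $N_{[a,b]}(y)=\sum_I N_I(y)$, the sum over the components $I$ of $[a,b]\setminus A$. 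On each such $I$ the function $f$ is AC, hence $\int_\R N_I(y)\,dy=\int_I|f'|$. Since $m_1(f(A))=0$,
\[
V_a^b(f)=\int_{\R\setminus f(A)} N_{[a,b]}(y)\,dy\le\sum_I\int_\R N_I(y)\,dy=\int_{[a,b]\setminus A}|f'|\le\int_{(0,1)\setminus A}|f'|<\infty.
\]
Now Banach--Zaretsky gives absolute continuity on every $[a,b]\subset(0,1)$. The open-set version follows componentwise, exactly as you say.
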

The lemma can be proved using the definition of absolute continuity, or it can be derived from the Banach-Zaretsky theorem \cite[Theorem 4.6.2, p.~196]{BC}.

If $L_{v_0}$ is a line parallel to a direction $v_0\in \R^n$, then $L_v\coloneqq L_{v_0}+v$, $v\in \{v_0\}^\perp $, gives the family of lines parallel to $v_0$. If $\partial_{v_0} f$ is the partial derivative of $f$ in the direction $v_0$, then 
\begin{align*}
\int_{L_v\cap (U\setminus K)} |\partial_{v_0} f|<\infty
\end{align*}
for a.e.\ $v\in \{v_0\}^\perp$, as one sees from Fubini's theorem and H\"older's inequality. The strategy of the proof of the Main Theorem is to estimate $m_1(f(L_v\cap K))$, and show that this is equal to $0$ for a.e.\ $v$ by integrating over all lines $L_v$ parallel to the direction $v_0$.  Then by Lemma  \ref{Main Theorem Basic Lemma} we will conclude that $f$ is absolutely continuous on a.e.\ line parallel to $v_0$. We also need this to be true for a set of directions $\{v_1,\dots,v_n\}$ that span $\R^n$.

We are now ready to prove the Main Theorem. We invite the reader to compare the proof with the proof of Proposition \ref{Detour Zero measure}.

\begin{proof}[Proof of Theorem \ref{Main Theorem}]
We show that $m_1(f(L\cap K))=0$ for a.e.\ line $L$ that is parallel to one of the directions $v_0$, along which the lines with the detour property exist in Definition \ref{Definition Detour set}. There are $n$ such directions that span $\R^n$, so this suffices by the preceding remarks. 

Recall here that $U$ is a large ball that contains $K$, such that $U\cap \Omega_0$ is a H\"older domain. Proposition \ref{Sobolev Holder boundary} implies that $m_1( f(L\cap \partial \Omega_k))=0$ for all $k\in \N\cup \{0\}$ and for a.e.\ line $L$, parallel to $v_0$. Thus, it suffices to prove that
\begin{align*}
m_1( f( L\cap K \setminus \bigcup_{k=0}^\infty \partial \Omega_k ))=m_1( f(L\cap U\setminus \bigcup_{k=0}^\infty\br \Omega_{k}))=0
\end{align*}
for a.e.\ line $L$ parallel to $v_0$. 

Let $Z \subset \N\cup \{0\}$ be a finite set that contains $0$. We fix a line $L$, parallel to $v_0$. The set $L\cap U \setminus \bigcup_{k\in Z}\br \Omega_k$ is an open subset of $L$, so it is the union of at most countably many open intervals. Let $\{I_j\}_{j\in \N}$ be the family of these intervals that intersect $K$, where we set $I_j =\emptyset$ for large indices $j$ if the intervals are finitely many. Note for each $j\in \N$ the interval $\br I_j$ does not intersect $\partial U\subset \Omega_0$, and has its endpoints on sets $\partial \Omega_k$, $k\in Z$. This is because the index $k=0$ is already contained in $Z$.

\begin{figure}
\begin{tikzpicture}[line cap=round,line join=round,>=triangle 45,x=1.0cm,y=1.0cm]

	\clip(7.45,-6.26) rectangle (19,0.79);
	\fill[fill=black!5] (7.45,-6.26) rectangle (21,0.79);
	
	\draw[fill=white](2.14,0.3) circle (8.226809831277249cm);
	\draw[fill=white](23.32,-2.16) circle (6.384982380555172cm);
	\draw[fill=white](11.,-2.) circle (0.9280684951062352cm);
	\draw[fill=white](12.66,-1.16) circle (0.9441912862916136cm);
	\draw[fill=white](13.82,-3.44) circle (1.619459061491593cm);
	\draw[fill=white](11.62,-3.4) circle (0.5942429088529437cm);
	\draw[fill=white](12.143423138930226,-2.4853261934543767) circle (0.31150552182162605cm);
	\draw[fill=white](15.558220820272451,-1.6844141573186742) circle (0.8511781025342522cm);
	\draw[fill=white](16.32272776385653,-2.448921100902754) circle (0.23028817764831078cm);
	\draw[fill=white](16.664935633841786,-2.208647490062043) circle (0.19618527246605788cm);
	\draw[fill=white](16.664935633841786,-2.514450267495675) circle (0.11291534817782811cm);
	\draw[fill=white](16.879248577270303,-2.3443489415526373) circle (0.05753439006522982cm);

	\draw [line width=0.8pt,color=green!50!black] (9.95,-2.32)-- (16.95, -2.32);
	\draw [line width=0.8pt,color=green!50!black,dashed] (0,-2.32)--(9.95,-2.32);
	\draw [line width=0.8pt,color=green!50!black,dashed] (16.95,-2.32)--(20,-2.32);
	
	\node[draw, shape=circle, fill=black, scale=.15,label=below:$x$] (x) at (10.05,-2.32) {};
	\node[draw, shape=circle, fill=black, scale=.15,label=above right:$y$] (y) at (16.95,-2.32) {};
	\node[draw, shape=circle, fill=black, scale=.15,label=above:$a$] (a) at (11.1,-2.32) {};
	\node[draw, shape=circle, fill=black, scale=.15,label=above:$b$] (b) at (15.8,-2.32) {};
	\node[draw, shape=circle, fill=black, scale=.15,label=above:$c$] (c) at (11,-1.7) {};
	\node[draw, shape=circle, fill=black, scale=.15,label=above:$d$] (d) at (15.7,-1.6) {};
	
	\node[draw, shape=circle, fill=black, scale=.15] (n1) at (10.12,-1.7) {};
	\node[draw, shape=circle, fill=black, scale=.15] (n2) at (11.84,-2.38) {};
	\node[draw, shape=circle, fill=black, scale=.15] (n3) at (12.4,-2.63) {};
	\node[draw, shape=circle, fill=black, scale=.15] (n4) at (14.95,-2.3) {};
	\node[draw, shape=circle, fill=black, scale=.15] (n5) at (16.18,-2.28) {};

	\draw[thick,color=purple,dashed] (7,-2).. controls (8,-1.2).. (n1);
	\draw[thick,color=purple,dashed] (n1).. controls (10.6,-1.8).. (c);
	\draw[thick,color=purple] (c).. controls (11.4,-1.6) and (11.6,-2.4).. (n2);
	\draw[thick,color=purple] (n2).. controls (12.2,-2.3) and (12.3,-2.6).. (n3);
	\draw[thick,color=purple] (n3).. controls (13,-3) and (14.4,-2.6).. (n4);
	\draw[thick,color=purple] (n4).. controls (15.2,-2.2)and (15.4,-1.4).. (d);
	\draw[thick,color=purple,dashed] (d).. controls (16,-2).. (n5);
	\draw[thick,color=purple,dashed] (n5).. controls (16.4,-2.6)and (16.7,-2.1).. (y);
	\draw[thick,color=purple,dashed] (y).. controls (17.4,-3)and (18.1,-2).. (19,-2.7);

	\node[anchor=north] (gammaI) at (13.6,-2.8) {$\gamma_I$};	
	\node[anchor=south] (gamma) at (9,-1.4) {$\gamma$};
	\node[anchor=north] (L) at (9,-2.3) {$L$};
	\node[anchor=north west] (Omegak) at (8,0) {$\Omega_k$};
	\node[anchor=north west] (Omegal) at (18,0) {$\Omega_l$};
	

	\end{tikzpicture}
\caption{The detour path $\gamma$ and its subpath $\gamma_I$.}\label{fig:detour}
\end{figure}
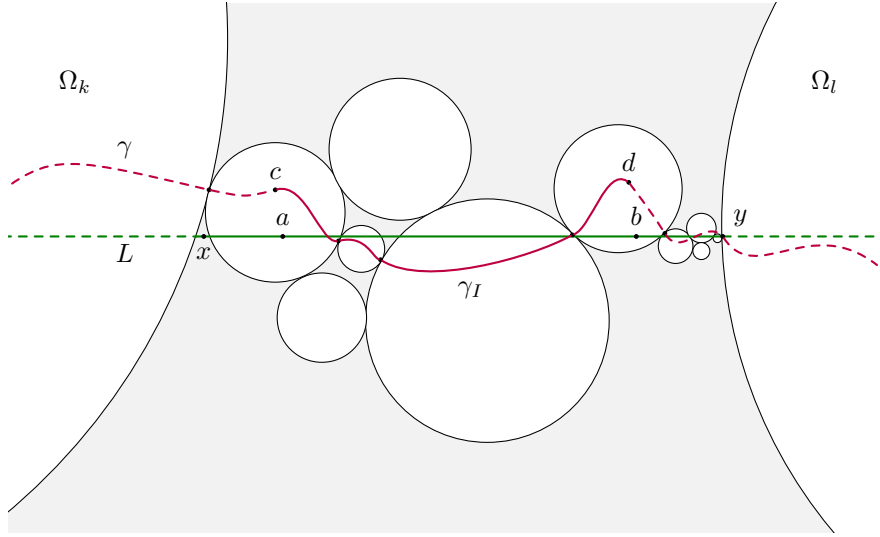

We first fix one interval $I=I_j$ and we estimate $m_1(f(I\cap K))$. Since $f$ is uniformly continuous in a neighborhood of $I$, for a fixed $\varepsilon>0$ there exists $\delta>0$ such that $|f(z)-f(w)|<\varepsilon$ whenever $|z-w|<\delta$, and $z,w$ lie in the $\delta$-neighborhood of $I$. Let $x,y\in \br I\cap K$ be such that $|f(x)-f(y)|=\diam(f(I\cap K))$. Consider a compact subinterval $[a,b] \subset (x,y)$ such that $|x-a|<\delta/4$ and $|y-b|<\delta/4$. Let $\gamma$ be a detour path as in Definition \ref{Definition Detour set} that lies $\eta$-close to the line $L$, where $0<\eta<\delta/4$,  and $\eta$ is so small that the $\eta$-neighborhood of $[a,b]$ does not intersect the closed set $\bigcup_{k\in Z} \br\Omega_k$. Then we can choose a subpath $\gamma_I$ of $\gamma$ that is entirely contained in the $\eta$-neighborhood of $[a,b]$, and whose endpoints $c,d$ satisfy $|a-c|<\eta<\delta/4$ and $|b-d|<\eta<\delta/4$; see Figure \ref{fig:detour}. In particular, $\gamma_I $ does not intersect $\bigcup_{k\in Z} \br\Omega_k$. Note that $|c-x|<\delta/2$ and $|d-y|<\delta/2$. Now, using the uniform continuity we have
\begin{align*}
|f(x)-f(y)| \leq 2\varepsilon  + |f(c)-f(d)|.
\end{align*}
Recall that $\gamma_I$ is contained in the union of  finitely many sets $\br \Omega_{k}$, $k\in \Gamma(I)$, $k\notin Z$;  here $\Gamma(I)=\{k\in \Gamma : \gamma_I \cap \br\Omega_k\neq \emptyset\}$ and $\Gamma$ is as in Definition \ref{Definition Detour set}(ii), so that $\gamma\subset \bigcup_{k\in \Gamma} \br \Omega_k$. If $c,d$ lie on boundaries of sets $\Omega_k$, then using the fact that $\Gamma(I)$ is a finite set and the triangle inequality we have the estimate
\begin{align*}
|f(c)-f(d)| \leq \sum_{k\in \Gamma(I)} \diam(f(\partial\Omega_k)). 
\end{align*} 
If $c\in \Omega_k$, then $\dist(c,\partial \Omega_k)<|c-x| <\delta/2$ (since $x\in K$), and a similar inequality is true for $d$ in case it lies in some set $\Omega_k$. By the uniform continuity we thus have
\begin{align*}
|f(c)-f(d)|\leq  2\varepsilon +\sum_{k\in \Gamma(I)} \diam(f(\partial\Omega_k)).
\end{align*} 
Summarizing, we have the estimate
\begin{align}\label{Main Theorem One interval bound}
\diam(f(I\cap K)) \leq 4\varepsilon +\sum_{k\in \Gamma(I)} \diam(f(\partial\Omega_k)).
\end{align}

We now fix finitely many intervals $I_1,\dots,I_N$ and run the same procedure for each individual interval to obtain points $x_j,y_j\in \br I_j$ such that $\diam(f(I_j\cap K))=|f(x_j)-f(y_j)|$, and  paths $\gamma_j= \gamma_{I_j}$ as above, that do not intersect $\bigcup_{k\in Z}\br\Omega_k$, and are contained in the union of finitely many regions $\br\Omega_k$, $k\in \Gamma(I_j)$. We note that $\varepsilon$ is still fixed. By running a greedy algorithm we group the indices $j$ into finite sets $A_i$, $i=1,\dots,M$, such that:
\begin{enumerate}[(i)]
\item intervals $I_j$ with indices $j$ lying in distinct sets $A_i$ intersect disjoint sets of regions $\br\Omega_k$, $k\in \bigcup_{j=1}^N \Gamma(I_{j})$, and
\item $\bigcup_{j\in A_i}\bigcup_{k\in \Gamma(I_j)} \br \Omega_k$ is a connected set for all $i=1,\dots,M$.
\end{enumerate}

We fix $i$ and points $z,w\in \bigcup_{j\in A_i}I_j\cap K$. Assume that $z\in I_{j_1}$ and $w\in I_{j_2}$. We have
\begin{align*}
|f(z)-f(w)|&\leq  |f(z)-f(x_{j_1})| + |f(x_{j_1}) -f(x_{j_2})|+ |f(x_{j_2})-f(w)|\\
&\leq  \diam( f(I_{j_1}\cap K)) + |f(x_{j_1}) -f(x_{j_2})| + \diam(f (I_{j_2}\cap K)).
\end{align*}	
We bound the first and last term using \eqref{Main Theorem One interval bound}. For the middle term we employ the same procedure that we used to derive \eqref{Main Theorem One interval bound}; here we need property (ii) of the set $A_i$ to ``travel" from $x_{j_1}$ to $x_{j_2}$ using sets $\br \Omega_{k}$ that are intersected by $\gamma_j$ for $j\in A_i$. Thus, we get the estimate
\begin{align*}
|f(x_{j_1}) -f(x_{j_2})| \leq 4\varepsilon + \sum_{{k\in \bigcup_{j\in A_i}\Gamma(I_j)}} \diam(f(\partial \Omega_k)).
\end{align*}
Combining the above we obtain 
\begin{align*}
|f(z)-f(w)|&\leq 12\varepsilon + \sum_{k\in \Gamma(I_{j_1})} \diam(f(\partial\Omega_k))+\sum_{k\in \Gamma(I_{j_2})} \diam(f(\partial\Omega_k))\\
&\qquad \qquad+\sum_{{k\in \bigcup_{j\in A_i}\Gamma(I_j)}} \diam(f(\partial \Omega_k))\\
&\leq 12\varepsilon + 3\cdot \sum_{{k\in \bigcup_{j\in A_i}\Gamma(I_j)}} \diam(f(\partial \Omega_k)).
\end{align*}
This yields the bound
\begin{align*}
\diam(f(\bigcup_{j\in A_i}I_j\cap K)) \leq 12\varepsilon + 3\cdot \sum_{{k\in \bigcup_{j\in A_i}\Gamma(I_j)}} \diam(f(\partial \Omega_k)).
\end{align*}

Hence, we have 
\begin{align*}
m_1( f(\bigcup_{j=1}^N I_j\cap K))&=m_1( \bigcup_{i=1}^M f(\bigcup_{j\in A_i}I_j\cap K))\\
&\leq \sum_{i=1}^M \diam(f(\bigcup_{j\in A_i}I_j\cap K))\\
&\leq 12M\varepsilon +3 \sum_{i=1}^M \sum_{{k\in \bigcup_{j\in A_i}\Gamma(I_j)}} \diam(f(\partial \Omega_k)).
\end{align*}
Recall here that the detour paths $\gamma_j$ do not intersect $\br\Omega_k$, $k\in Z$, and 
\begin{align*}
\bigcup_{j=1}^N \Gamma(I_j) \subset \Gamma\subset  \{k: \br \Omega_k\cap L \neq \emptyset\}\setminus Z,
\end{align*}
by the definition of the paths $\gamma_j$. Using property (i) of the sets $A_i$ we can write the above estimate as a single sum:
\begin{align*}
m_1( f(\bigcup_{j=1}^N I_j\cap K)) \leq  12M\varepsilon +3 \sum_{\substack{k:\br \Omega_k\cap L\neq \emptyset\\ k\notin Z}} \diam(f(\partial \Omega_k)).
\end{align*}
Note that $\varepsilon$ was arbitrary, so letting $\varepsilon \to 0$ we have
\begin{align*}
m_1( f(\bigcup_{j=1}^N I_j\cap K)) \leq 3 \sum_{\substack{k:\br \Omega_k\cap L\neq \emptyset\\ k\notin Z}} \diam(f(\partial \Omega_k)).
\end{align*}
Finally, letting $N\to\infty$ we obtain
\begin{align*}
m_1( f(L\cap K\setminus \bigcup_{k=0}^\infty \br\Omega_k)) &\leq m_1( f(L\cap K \setminus \bigcup_{k\in Z} \br \Omega_k))\\
&= m_1( f(\bigcup_{j=1}^\infty I_j\cap K))\\
&\leq 3 \sum_{\substack{k:\br \Omega_k\cap L\neq \emptyset\\ k\notin Z}} \diam(f(\partial \Omega_k)).
\end{align*}

To complete the proof we integrate over all lines $L=L_v=L_{v_0}+v$, $v\in \{v_0\}^\perp \simeq \R^{n-1}$,  and using Fubini's theorem we have
\begin{align*}
\int m_1( f(L_v\cap K \setminus \bigcup_{k=0}^\infty \br \Omega_k)) \, dm_{n-1}(v)&\leq 3 \sum_{k\notin Z} \diam(f(\partial \Omega_k)) \int_{\br \Omega_k\cap L_v\neq \emptyset} dm_{m-1}(v)\\
&\leq 3\sum_{k\notin Z} \diam(f(\partial \Omega_k))  \diam (\Omega_k)^{n-1}.
\end{align*}
If the last sum is convergent, by letting $Z\to \N\cup \{0\}$ we obtain that 
\begin{align*}
m_1( f(L_v\cap K \setminus \bigcup_{k=0}^\infty \br \Omega_k))=0
\end{align*}
for a.e.\ $v\in \{v_0\}^\perp$, as desired.

To prove our claim we use Proposition \ref{Sobolev Holder estimate}, applied to $f|_{\br\Omega_k}$, $k\geq 1$. We have
\begin{align*}
\sum_{k\notin Z} \diam(f(\partial \Omega_k))  \diam (\Omega_k)^{n-1} &\lesssim \sum_{k=1}^\infty \diam(\Omega_k)^n \left(\mint{-}_{\Omega_k} |\nabla f|^p \right)^{1/p} \\
&\simeq \sum_{k=1}^\infty m_n(\Omega_k)^{1-1/p} \left(\int_{\Omega_k} |\nabla f|^p \right)^{1/p},
\end{align*}
since $m_n(\Omega_k)\simeq \diam(\Omega_k)^n$ by Corollary \ref{Holder is Quasiball}, with uniform constants. Applying H\"older's inequality for the exponents $\frac{1}{p}+\frac{1}{p'}=1$, we see that the latter expression is bounded by 
\begin{align*}
\left(\sum_{k=1}^\infty  m_n(\Omega_k) \right)^{1/p'} \left( \int_U |\nabla f|^p\right)^{1/p}.
\end{align*}
Noting that  $\sum_{k=1}^\infty m_n(\Omega_k)\leq  m_n(\R^n\setminus \Omega_0) <\infty $, yields the conclusion.
\end{proof}

\section{Constructions and Examples}\label{Section Examples}

\subsection{A construction}
We give a general construction of detour sets in the plane.

\begin{prop}\label{Example Construction}
Let $K_m\subset K_{m-1}\subset \R^2$, $m\in \N$, be a sequence of compact sets with the property that for each $m\in \N\cup \{0\}$ the set $\inter(K_m)$ is the union of finitely many Jordan regions $\{V_{i,m}\}_{i\in I_m}$, and also $\R^2 \setminus K_m$ has finitely many connected components $\{\Omega_{j,m}\}_{j\in J_m}$, which are Jordan regions. Moreover, suppose that for each $m\in \N$ and $i\in I_m$ the boundary $\partial V_{i,m}$ is the union of at most three arcs, each contained in a boundary $\partial \Omega_{j,m'}$, where $j\in J_{m'}$ and $m'\leq m$. Finally, assume that the diameters of $V_{i,m}$ shrink to $0$ as $m\to\infty$, i.e.,
\begin{align*}
\lim_{m\to\infty }\sup_{i\in I_m} \diam(V_{i,m})=0.
\end{align*} 
Then $K\coloneqq \bigcap_{m=1}^\infty K_m$ is a detour set, if it has infinitely many complementary components.
\end{prop}

The Sierpi\'nski gasket is a prototype for this construction.

\begin{proof}
Let $L \subset \R^2$ be a line intersecting $K$ and fix $\varepsilon>0$. We shall show that there exists a detour path $\gamma$ lying in the $\varepsilon$-neighborhood of $L$. We fix a large $m\in \N$ such that $\diam( V_{i,m})<\varepsilon$ for all $i\in I_m$.  The set $L\setminus  \inter(K_m)$ is contained in $\bigcup_{j\in J_m} \br \Omega_{j,m}$, so it also is contained in the union of the closures of finitely many complementary components of $K$,  since $K\subset K_m$. In order to construct a detour path $\gamma$, we need to create small detours for the parts of $L$ intersecting $\inter(K_m)$. 

More specifically, we claim that we can cover $L\cap \inter(K_m)$ by finitely many closed intervals $[x_l,y_l]$, such that each interval has both of its endpoints on some set $\partial V_{i,m}$. Assume the claim for the moment. By assumption, $\partial V_{i,m}$ is the union of at most three arcs (suppose exactly three for the sake of the argument), each contained in a boundary $\partial \Omega_{j_1,m_1}, \partial \Omega_{j_2,m_2}$, $\partial \Omega_{j_3,m_3}$, respectively.  Since $\partial V_{i,m}$ is a Jordan curve containing $x_l,y_l$, there exists one arc $A_l$ of $\partial V_{i,m}$ connecting $x_l$ and $y_l$ that is contained in one or in the union of two of the arcs  $\partial \Omega_{j_1,m_1}, \partial \Omega_{j_2,m_2}$, $\partial \Omega_{j_3,m_3}$; namely $A_l$ is contained in the union of an arc that contains $x_l$ and an arc that contains $y_l$. Hence, the arc $A_l$ is contained in one or two components $\br \Omega_{j,m'}$ that intersect the line $L$. 

Replacing each of the intervals $[x_l,y_l]$ with an arc $A_l$ of a corresponding set $\partial V_{i,m}$, which has diameter less than $\varepsilon$, we obtain the desired detour path that lies $\varepsilon$-close to $L$. Finally we note that by construction the detour path is contained in the closures of finitely many complementary components of $K$, all of which are intersected by $L$, so the line $L$ has the detour property as required in Definition \ref{Definition Detour set}(ii) and (iii).

The construction of the intervals $[x_l,y_l]$ is done as follows. Assume that $L=\R$ and in particular it has the standard ordering. Then it meets the components $V_{i,m}$, $i\in I_m$, in some order, with respect to the point of first entry. We let $x_1\in \partial V_{i,m}$ be the first entry point of $L$ into some component $V_{i,m}$ of $\inter(K_m)$, and $y_1\in \partial V_{i,m}$, $y_1\geq x_1$, be the last exit point of $L$ from $V_{i,m}$. Then we let $x_2\geq y_1$ be the first entry point of $L$ into the ``next" component $V_{i',m}$, and $y_2\geq x_2$ be the last exit point. We proceed inductively.
\end{proof}

A construction ``dual" to this one is the following. Again, a good example of this construction to have in mind is the Sierpi\'nski gasket. A non-empty set $C\subset \R^2$ is called \textit{non-trivial} if it is not a point.

\begin{prop}\label{Example Construction2}
Let $K_0\subset \R^2$ be a compact set such that $\inter(K_0)$ consists of finitely many disjoint Jordan regions $V_{i,0}$, $i\in I_0$, and $\R^2\setminus K_0$ is the union of at least two, but finitely many disjoint Jordan regions $\Omega_{j,0}$, $j\in J_0$. Once $K_{m-1}$ is defined, we define $K_{m}$ by removing from each component $V_{i,m-1}$, $i\in I_{m-1}$, of $\inter(K_{m-1})$ a Jordan region $\Omega_{j,m}$, $j\in J_m$, in such a way that
\begin{enumerate}[\upshape(i)]
\item $\partial \Omega_{j,m}$ intersects an open arc contained in each non-trivial component of $\partial V_{i,m-1}\cap \partial \Omega_{j',k}$, $k\in \{0,\dots,m-1\}$, $j'\in J_k$, whenever the latter intersection is non-empty, and
\item $\inter(K_m)$ has finitely many components.
\end{enumerate}
Furthermore, assume that 
\begin{align*}
\lim_{m\to\infty} \sup_{j\in J_m} \diam(\Omega_{j,m}) =0.
\end{align*} 
Then the set $K\coloneqq \bigcap_{m=0}^\infty K_m$ is a detour set.
\end{prop}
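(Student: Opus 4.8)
The plan is to mimic the proof of Proposition \ref{Example Construction}, but with the roles of the interior pieces and the complementary components interchanged: there one covers $L\cap\inter(K_m)$ by short arcs of the (small) interior boundaries, while here the interior pieces $V_{i,m}$ need not be small, so instead one must descend into the self-similar structure of $\br V_{i,m}$ and build detours out of the (small) removed regions. First I would record the structure of $K$. Since $K_m\downarrow K$ and at every stage the new region is removed from $\inter(K_{m-1})$, an induction on $m$ shows that the components of $\R^2\setminus K_m$ are exactly the regions $\Omega_{j,k}$ with $0\le k\le m$ (the new regions $\Omega_{j,m}$ are open, pairwise disjoint from the old complement, and, by condition (i), only \emph{touch} the old boundaries, so they remain separate components). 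Letting $m\to\infty$, and noting $\Omega_{j,k}\cap K=\emptyset$ together with $\partial\Omega_{j,k}\subset\bigcap_m K_m=K$, one gets that the complementary components of $K$ are precisely the $\Omega_{j,m}$, $m\ge0$, $j\in J_m$; since $I_m\ne\emptyset$ for every $m$, new regions are created at every stage and there are infinitely many of them. I would also record that, for a component $V$ of $\inter(K_m)$, $\partial V\subset\partial K_m\subset\overline{\R^2\setminus K_m}=\bigcup_{k\le m}\br\Omega_{\cdot,k}$, so the boundary of every interior piece is covered by closures of complementary components of $K$.

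Now fix a line $L$ meeting $K$ and $\varepsilon>0$; using $\sup_{j\in J_m}\diam(\Omega_{j,m})\to0$, choose $M$ with $\diam(\Omega_{j,m})<\varepsilon$ for all $m\ge M$, $j\in J_m$. The set $L\setminus\inter(K_M)$ meets only finitely many regions $\Omega_{j,k}$ with $k\le M$ (each $J_k$ is finite), all of them complementary components of $K$ that intersect $L$; along these pieces the detour path is simply $L$ itself. What remains is $L\cap\inter(K_M)$, which is open in $L$, hence a union of open intervals $J=(c,d)$, each with $\br J\subset\br V_{i,M}$ for a single interior piece $V_{i,M}$ (finitely many such pieces, as $I_M$ is finite), and with $c,d\in\partial V_{i,M}$, hence on closures of complementary components of $K$.

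The heart of the argument is to build, for each such $J=(c,d)$ with $\br J\subset\br V$, $V=V_{i,M}$, a path $\sigma_J$ from $c$ to $d$ that lies in $\br V\cap\bigcup_k\br\Omega_k$, meets only finitely many complementary components, each of diameter $<\varepsilon$ and each intersected by $L$. This I would do by recursion on the stages $m\ge M+1$, using that the construction restricted to $\br V$ is again of the given type. At stage $M+1$ the region $\Omega_{j,M+1}$ (diameter $<\varepsilon$) is removed from $V$; wherever $J$ crosses it, $\sigma_J$ already follows $L$ through this complementary component, and the remaining portions $J\cap\inter(K_{M+1})$ split into finitely many shorter intervals inside sub-pieces $V'$ of $\inter(K_{M+1})$, each with endpoints on $\partial V'\subset\bigcup_{k\le M+1}\br\Omega_{\cdot,k}$; then recurse. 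Condition (i) is used precisely here: it forces the boundary of each newly removed region to reach the boundary-touching points of the older removed regions on $\partial V$, so the detour pieces produced at consecutive stages link up — through the line's own passages and through arcs of these boundaries — into a \emph{connected} path, while condition (ii) keeps the number of pieces finite at each stage. Passing to the limit (the surviving intervals being confined to ever smaller removed regions) yields $\sigma_J$. Finally I would glue all the $\sigma_J$ to the pieces of $L$ lying in the big complementary components to get the detour path $\gamma$, and verify Definition \ref{Definition Detour set}: $\gamma\subset\bigcup_k\br\Omega_k$ and $\gamma$ meets only finitely many of them by construction; every component met by $\gamma$ is met by $L$, since the big ones are traversed by $L$ directly and the small ones arise by refining subintervals of $L$; and $\gamma$ lies in the $\varepsilon$-neighbourhood of $L$ because, away from the finitely many big regions that $L$ crosses, each point of $\gamma$ lies in a removed region of diameter $<\varepsilon$ that meets $L$.

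The main obstacle is exactly the recursive construction of $\sigma_J$: making the stitching simultaneously connected (which is what condition (i) is designed to allow), convergent across infinitely many scales, confined to the $\varepsilon$-tube around $L$ (here the shrinking hypothesis on $\diam(\Omega_{j,m})$ is essential), and never forced to use a complementary component that $L$ does not meet (condition (iii) of the detour definition) is the delicate part; conditions (i)–(ii) and the shrinking hypothesis are precisely what make it go through, and one may need to discard a negligible set of exceptional lines (tangent to some boundary curve), which is harmless since the detour property is only required for almost every line.
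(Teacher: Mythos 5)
The paper's proof of Proposition \ref{Example Construction2} works by \emph{reducing} to Proposition \ref{Example Construction}: the entire content is to show that $\sup_{i\in I_m}\diam(V_{i,m})\to 0$ follows from the hypothesis $\sup_{j\in J_m}\diam(\Omega_{j,m})\to 0$, after first verifying (via a topological ``general fact'' about Jordan regions) that all $V_{i,m}$ are indeed Jordan regions. Your proposal takes a genuinely different route — trying to build the detour path $\gamma$ directly by recursion — but in doing so it skips exactly the step that is the heart of the matter, and this leaves a real gap.

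The gap is the following. Your recursion on stages $m\ge M+1$ is supposed to produce, for each interval $J\subset L\cap\inter(K_M)$, a path $\sigma_J$ in $\bigcup_k\br\Omega_k$ close to $L$. But at each stage the only progress you make is along the portions of $J$ that happen to cross the newly removed region $\Omega_{j,m}$; if $L$ misses $\Omega_{j,m}$, the surviving subinterval of $J$ simply descends unchanged into a component $V'$ of $V\setminus\br\Omega_{j,m}$. There is no a priori reason for the surviving intervals to shrink, and hence no a priori reason for the recursion to converge, unless one already knows that the interior pieces $V_{i,m}$ themselves shrink. Your phrase ``the surviving intervals being confined to ever smaller removed regions'' is not accurate — the surviving intervals are confined to interior pieces $V_{i,m}$, not removed regions — and it is precisely the assertion that these interior pieces get small that must be proved. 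This is what the paper's argument by contradiction establishes (using that $\partial V_{i,m}$ is a Jordan curve covered by at most three $\partial\Omega_{j,k}$'s, etc.), and without it your recursion either fails to terminate or produces arcs of large boundaries $\partial V$ that leave the $\varepsilon$-tube around $L$. You also do not address the preliminary issue that the construction itself needs justification — namely that each $V_{i,m}$ is a Jordan region — which the paper handles with a separate induction using the same ``general fact''; this matters, since condition (i) and the diameter argument both rely on this Jordan-region structure. In short: once you have $\sup_i\diam(V_{i,m})\to 0$, the reduction to Proposition \ref{Example Construction} is immediate and there is no need for the delicate gluing you describe, while without it your construction does not go through.
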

\begin{proof}
First we prove that the inductive construction is valid. For this, we need to check that each component $V_{i,m}$ of $\inter(K_m)$ is a Jordan region for all $m\geq 0$. We do this for $K_1$.

We claim that the boundary of each component $ V_{i,0}$ of $\inter (K_{0})$ has to contain  non-trivial arcs of at least two distinct boundaries $\partial \Omega_{j,0},\partial \Omega_{j',0}$. To see this, note first that  $\partial V_{i,0}$ is contained in the union $\bigcup_{j\in J_0} \partial \Omega_{j,0}$. If the claim were false, then there would exist $j\in J_0$ such that for all $j'\neq j$, $j'\in J_0$, the set $\partial V_{i,0}\cap \partial \Omega_{j',0}$ is either empty or a totally disconnected compact set, and thus $\partial V_{i,0}\subset \partial \Omega_{j,0}$. In fact, $\partial V_{i,0}=\partial \Omega_{j,0}$ because they are both Jordan curves. This implies that $\Omega_{j,0}$ is an unbounded Jordan region, and that $K_0= \br V_{i,0}$, which contradicts the assumption that $\R^2 \setminus K_0$ consists of at least two Jordan regions.

Assume now that $\Omega_{j,1}$ is removed from $V_{i,0}$ in the construction of $K_1$. Then, by construction,	 $\partial \Omega_{j,1}$ has to intersect an open arc  from each non-trivial component of $\partial \Omega_{j',0}\cap \partial V_{i,0}$,  $j'\in J_0$. The preceding paragraph implies that at least two such non-trivial components exist, and therefore $\partial \Omega_{j,1}$  intersects $\partial V_{i,0}$ in at least two points. We now use the following \textit{general fact}, which follows from Ker\'ekj\'art\'o's theorem \cite[Chapter VI.16, p.~168]{New}:

 \begin{flushleft}
\textit{If $\Omega, V$ are bounded Jordan regions with $\Omega\subset V$, and $\partial \Omega\cap \partial V$ contains at least two points, then each component of $V\setminus \br \Omega$ is a Jordan region, whose boundary is the union of a non-trivial arc of $\partial\Omega$ and a non-trivial arc of $\partial V$.}
 \end{flushleft}
 
In our case, this implies that each  component $V_{i',1}$ of $V_{i,0}\setminus \br \Omega_{j,1}$ is a Jordan region, whose boundary $\partial V_{i',1}$ is the union of a non-trivial arc of $\partial\Omega_{j,1}$ and a non-trivial arc of $\partial V_{i,0} \subset \bigcup_{j'\in J_0}\partial \Omega_{j',0}$. In particular, $\partial V_{i',1}$ also contains a non-trivial arc of some $\partial \Omega_{j',0}$. Summarizing, $\partial V_{i',1}$ contains a non-trivial arc of $\partial \Omega_{j,1}$ and a non-trivial arc of $\partial \Omega_{j',0}$. Now, the same argument can be run with $V_{i,0}$ replaced by $V_{i',1}$, and one proceeds inductively to prove that $V_{i,m}$ is a Jordan region for all $i,m$.

Note at this point that $\partial V_{i',1}$ cannot be entirely contained in the boundary of one set $\partial \Omega_{l,m}$, but it is the union of two or three non-trivial arcs, each contained in a distinct set $\partial \Omega_{l,m}$, $m\in \{0,1\}$, $l\in J_m$. Indeed, one of them is a subarc of $\partial \Omega_{j,1}$. The other ones are precisely arcs of sets $\partial \Omega_{l,0}$ whose intersection with $\partial V_{i',1}\cap \partial V_{i,0}$  is a non-trivial arc. By preceding arguments, there exists at least one such set. However, there can be at most two. That is because if there were a third one, there would exist $l\in J_0$ and a non-trivial component of $\partial V_{i,0}\cap \partial \Omega_{l,0}$ whose interior arc is not intersected by $\partial \Omega_{j,1}$, contradicting (i); see Figure \ref{fig-construction}. By induction, for all $m\geq 1$, $i\in I_m$, the set $\partial V_{i,m}$ cannot be entirely contained in the boundary of one set $\partial \Omega_{j,k}$, and it is the union of two or three non-trivial arcs, each contained in a distinct set $\partial \Omega_{j,k}$, $k\in \{0,\dots,m\}$, $j\in J_k$.

\begin{figure}
\centering
\begin{tikzpicture}
	
	//left picture (possible)		
	\begin{scope}[shift={(-3,0)},scale=.8]			
			//grid  
			//grid	
 					
 			//Omega, V at level 0
 			\draw[thick, fill=red!10] (-3,-5)--(1,-5)--(-3,-4)--(-3,-5);
 			\draw[thick, fill=red!10] (-3,-4)--(1,-4)--(-3,-3)--(-3,-4);
			\draw[thick, fill=red!10] (-3,-3)--(-2.5,-2.5)--(-3,-2)--(-3,-3); 			
 			\draw[thick,fill=red!10] (-3,-2)--(1,-4)--(-3,-1)--(-3,-2);
 			\draw[thick, fill=red!10] (-3,-1)--(1,0)--(-3,0)--(-3,-1);

 			//big square
 			\draw[thick, fill=red!10] (1,-5)--(5,-5)--(5,0)--(1,0)--(1,-5);
 			//V 1
 			\filldraw[pattern=north west lines, pattern color=red!40] (1,-3)--(3,-4)--(3,-4.5)--(1,-4.7)--(1,-3);
 			//Omega 1
 			\draw[very thick, color= blue, fill=green!10] (1,-1)--(1,-1.5)--(2,-2)--(1,-3)--(3,-4)--(3,-4.5)--(1,-4.7)--(4,-5)-- (5,-3)--(5,-2)--(4,0)--(2,0)--(1,-1);

 			//nodes
 			\node[label=$\Omega_{j,1}$] at (3.5,-3){};
 			\node[label=$V_{i',1}$] at (2,-4.5){};
 			\node[label=$\Omega_{l,0}$] at (0,-2.5){};
 			\node[label=$\partial V_{i,0}$] at (5.6,-5){};
 	\end{scope}
 	
 	//right picture (impossible)
 	\begin{scope}[shift={(3.5,0)},scale=.8]
 	//grid  
			//grid	
 					
 			\clip(-1,-3) rectangle (4,-5);
 			//Omega, V at level 0
 			\draw[thick, fill=red!10] (-3,-5)--(1,-5)--(-3,-4)--(-3,-5);
 			\draw[thick, fill=red!10] (-3,-4)--(1,-4)--(-3,-3)--(-3,-4);
			\draw[thick, fill=red!10] (-3,-3)--(-2.5,-2.5)--(-3,-2)--(-3,-3); 			
 			\draw[thick,fill=red!10] (-3,-2)--(1,-3.7)--(-3,-1)--(-3,-2);
 			\draw[thick, fill=red!10] (-3,-1)--(1,0)--(-3,0)--(-3,-1);
 			
 			//big square
 			\draw[thick, fill=red!10] (1,-5)--(5,-5)--(5,0)--(1,0)--(1,-5);
 			//V 1
 			\filldraw[pattern=north west lines, pattern color=red!40] (1,-3)--(3,-4)--(3,-4.5)--(1,-4.7)--(1,-3);
 			//Omega 1
 			\draw[very thick, color= blue, fill=green!10] (1,-1)--(1,-1.5)--(2,-2)--(1,-3)--(3,-4)--(3,-4.5)--(1,-4.7)--(4,-5)-- (5,-3)--(5,-2)--(4,0)--(2,0)--(1,-1);

 			//nodes
 			\node[label=$\Omega_{j,1}$] at (3.5,-3){};
 			\node[label=$V_{i',1}$] at (2,-4.5){};
 			\node[scale=.5, label=$\Omega_{l,0}$] at (0,-3.95){};
 			\node[label=$\partial V_{i,0}$] at (5.5,-5){};
 	\end{scope}	
	\end{tikzpicture}

\caption{In the left figure, pink color denotes the set $K_0$, after the removal of $\Omega_{j,1}$ from $V_{i,0}$. Here, $\partial V_{i',1}$ is the union of three arcs of sets $\partial \Omega_{k,m}$. On the right, we see an impossible situation.}\label{fig-construction}
\end{figure}

In order to prove that $K$ is a detour set, by Proposition \ref{Example Construction}, it suffices to prove that the diameters of $V_{i,m}$ converge to $0$, under our assumptions.

We argue by contradiction, assuming that for some $\varepsilon>0$ there exists a sequence $V_k$, $k\in \N$, of components of $\inter(K_{m_k})$, $m_k\nearrow \infty $, such that $\diam(V_k)\geq \varepsilon$ for all $k$. Since the diameters of $\Omega_{j,m}$ converge to $0$, and $\partial V_{k}$ is the union of two or three arcs of sets $\partial \Omega_{j,m}$, we conclude that there exists some set $\Omega_{j_1,m_1}\eqqcolon \Omega_1$ such that $A_k\coloneqq \partial \Omega_1 \cap \partial V_k \neq \emptyset$, and $\diam(A_k)\geq \varepsilon/3 $ for infinitely many $k\in \N$. By passing to a subsequence, we assume that this holds for all $k$, and that $A_k$ intersects the same non-trivial segment of $\partial \Omega_1$ for all $k$.

From the construction we see that this is only possible if $A_{k}\subset A_{k-1}$, and $V_k\subset V_{k-1}$. Thus, the endpoints $x_k,y_k$ of $A_k$ converge to distinct points $x,y\in \partial \Omega_1$. This shows that there exists $\delta>0$ such that for all large enough $k$ the points $x_k,y_k$ are at least $\delta$ apart. Hence, the complementary arc of $A_k$ in $\partial V_k$ has diameter at least $\delta$. Using again the fact that there are finitely many ``large" sets $\Omega_{j,m}$ and that $\partial V_k$ is the  union of two or three non-trivial arcs of distinct sets $\partial \Omega_{j,m}$, we see that there exists $\Omega_{j_2,m_2}\eqqcolon \Omega_2$, distinct from $\Omega_1$, such that $B_k\coloneqq \partial \Omega_2 \cap \partial V_k \neq \emptyset$, and $\diam(B_k)\geq \delta/3$ for infinitely many $k\in \N$. Using a subsequence, we may assume that this holds for all $k$. Note that $B_{k}\subset B_{k-1}$, and that $B_k$ is adjacent to $A_k$, i.e., $A_k\cap B_k$ contains at least one point (and at most two).

Summarizing, we have that the Jordan curve $\partial V_k$ contains two adjacent arcs $A_k \subset \partial \Omega_1$ and $B_k\subset \partial \Omega_2$ with diameters bounded below by some $\varepsilon_0>0$ for all $k\in \N$, and such that $A_k$ and $B_k$ are  decreasing in $k$.

Assume that $A_k$ and $B_k$ share only one endpoint for infinitely many $k$, and thus for all $k$ after passing to a subsequence. In this case, the set $\partial V_k$ has to be the union of three non-trivial arcs of  distinct sets $\partial \Omega_{j,m}$, two of which are already $A_k$ and $B_k$. We denote by  $C_k$ the third arc connecting the other endpoints of $A_k$ and $B_k$, so that $\partial V_k =A_k\cup B_k\cup C_k$. For each $k$ the arc $C_k$ is contained in some $\partial \Omega_k\coloneqq \partial \Omega_{j_k,m_k}$. The sets $\Omega_k$ have to be distinct, since $V_k$ is obtained when we remove from the previous level of the construction a ``new" Jordan region $\Omega_{j,m}$ (see (i)), whose boundary has to intersect $\partial V_k$ by the \textit{general fact} mentioned in the beginning of the proof. If $z_k\in A_k$, $w_k\in B_k$ are the endpoints of $C_k$, then they converge to distinct points $z,w$, because the diameters of $A_k$ and $B_k$ are bounded below. This implies that $\diam(C_k)$, and thus $\diam(\Omega_k)$, does not converge to $0$, a contradiction. 

If $A_k$ and $B_k$ share both endpoints for all but finitely many $k$, then $\partial V_k=A_k\cup B_k$, say, for $k\geq k_0$. We fix $k\geq k_0$ for the moment. In the next level of the construction, according to (i), we have to remove a Jordan region $\Omega_{j,m}$ from $V_k$, such that $\partial \Omega_{j,m}$ intersects both $A_k$ and $B_k$ (at ``interior" points). Thus, using the \textit{general fact}, we see that a ``descendant" $V_k'$ of $V_k$ contains in its boundary a non-trivial arc of a ``new" Jordan curve $\partial \Omega_{j,m}$, distinct from $\partial \Omega_1\supset A_k$ and $\partial \Omega_2\supset B_k$. The same is true for all further ``descendants" of $V_k$, hence, also for $V_{k+1} \subset V_k$. This shows that $V_{k+1}$ is the union of three non-trivial arcs:  $A_{k+1}, B_{k+1}$, and a subarc of some $\partial \Omega_{j,m}$. We have already reached a contradiction, since $V_{k+1}=A_{k+1}\cup B_{k+1}$ in this case.
\end{proof}

\begin{remark}\label{Example Homeo invariance}
If $K$ is constructed inductively as in Proposition \ref{Example Construction} or Proposition \ref{Example Construction2}, then any homeomorphism $h\colon \R^2 \to \R^2$ yields a set $K'=h(K)$ that can also be constructed inductively, by pushing forward the construction of $K$, via $h$. Furthermore, the assumptions on the shrinking diameters remain invariant under homeomorphisms, so if $K$ is a detour set then the set $K'$ is also a detour set.
\end{remark}

\subsection{Examples}

\subsubsection{The Sierpi\'nski gasket}
We recall the construction of the Sierpi\'nski gasket from the Introduction. Let $K_0$ be  an equilateral triangle of sidelength $1$, and subdivide it into four equilateral triangles of sidelength $1/2$,  with disjoint interiors. Let $K_1$ be $K_0$ with the open middle triangle removed, so $K_1$ is the union of three closed equilateral triangles of sidelength $1/2$. In the $m$-th step the set $K_m\subset K_{m-1}$ is obtained by subdividing each of the $3^{m-1}$ triangles of $K_{m-1}$ into four equilateral triangles of sidelength $1/2^m$, and removing the middle triangles.  The set $K\coloneqq \bigcap_{m=1}^\infty K_m$ is  the {Sierpi\'nski gasket}.

By Proposition \ref{Example Construction} we see that $K$ has the detour property. Furthermore, all bounded components of $\R^2 \setminus K$ are equilateral triangles, which are uniform H\"older domains. The unbounded component is also a H\"older domain (in the sense of Definition \ref{Main Holder Detour}). Thus, $K$ is a H\"older detour set.  By Theorem \ref{Intro main theorem}, $K$ is $W^{1,p}$-removable for $p>2$.

\subsubsection{Apollonian gaskets}
An Apollonian gasket is constructed inductively as follows; see Figure \ref{fig:carpet-Apollonian}. Let $C_1,C_2,C_3$ be three mutually tangent circles in the plane with disjoint interiors, and let $\Omega_1,\Omega_2,\Omega_3$ be the disks that they enclose, respectively. Then by a theorem of Apollonius there exist exactly two circles that are tangent to all three of $C_1,C_2,C_3$. We denote by $C_0$ the outer circle that is tangent to $C_1,C_2,C_3$, and by $\Omega_0$ the unbounded region in the exterior of $C_0$. For the inductive step we apply Apollonius's theorem to all triples of  mutually tangent circles of the previous step. In this way, we obtain a countable collection of circles $\{C_k\}_{k\geq 0}$, and Jordan regions $\{\Omega_k\}_{k\geq 0}$. The set  $K\coloneqq \R^2 \setminus \bigcup_{k=0}^\infty \Omega_k $ is an  \textit{Apollonian gasket}. 

We will show that $K$ is a detour set using Proposition \ref{Example Construction2}. Define $K_0= \R^2 \setminus \bigcup_{k=0}^3 \Omega_k$, and $K_m=\R^2 \setminus \bigcup_{k=0}^{m+3} \Omega_k$. Then $K=  \bigcap_{m=0}^\infty K_m$. Observe that every three mutually tangent circles split their complement in the sphere $\widehat{\C}$ in two components which are Jordan regions. The set $\inter(K_0)$ has finitely many components which are Jordan regions, and the interior of its complement consists of four disjoint Jordan regions. Inductively, $\inter(K_m)$ has finitely many components, which are all Jordan regions for $m\geq 0$.  Furthermore $K_{m+1}$ is obtained from $K_m$ exactly as in the setting of Proposition \ref{Example Construction2}. Namely, from each component $V$ of $\inter(K_{m})$, we remove a disk that is tangent to all three circles comprising the boundary of $V$. Finally, the diameters of the circles that we remove have to converge to zero. Indeed, if $r_k$ is the radius of $C_k$, then 
\begin{align*}
\pi \sum_{k=1}^\infty r_k^2= \sum_{k=1}^\infty m_2(\Omega_k) \leq m_2(\R^2\setminus \Omega_0)<\infty.
\end{align*}
Hence, $K$ is indeed a detour set. 

All bounded components of $\R^2\setminus K$ are disks, so they are uniform H\"older domains. The unbounded component is also a H\"older domain. Thus, $K$ is $W^{1,p}$-removable for $p>2$, by Theorem \ref{Intro main theorem}.

\subsubsection{Julia sets}

Detour sets also appear in the setting of Complex Dynamics as Julia sets of certain types of rational maps. Let $f\colon \widehat{\C} \to \widehat{\C}$ be a rational map of degree at least $2$. We denote by $f^n$ the $n$-fold composition $f\circ\dots\circ f$. The \textit{Julia set} $\mathbf J(f)$ of $f$ is the set of points  $z\in \widehat{\C}$ that have arbitrarily small open neighborhoods $U\subset \widehat{\C}$ on which $\{f^n\}_{n\in \N}$ fails to be a normal family. The complement of the Julia set in $\widehat{\C}$ is called the \textit{Fatou set} and is denoted by $\mathbf F(f)$. See \cite{Mil} for background on Complex Dynamics.

\begin{figure}
\centering
		\begin{tikzpicture}
			\node[] (julia) at (0,0)
				{\includegraphics[width=.6\textwidth]{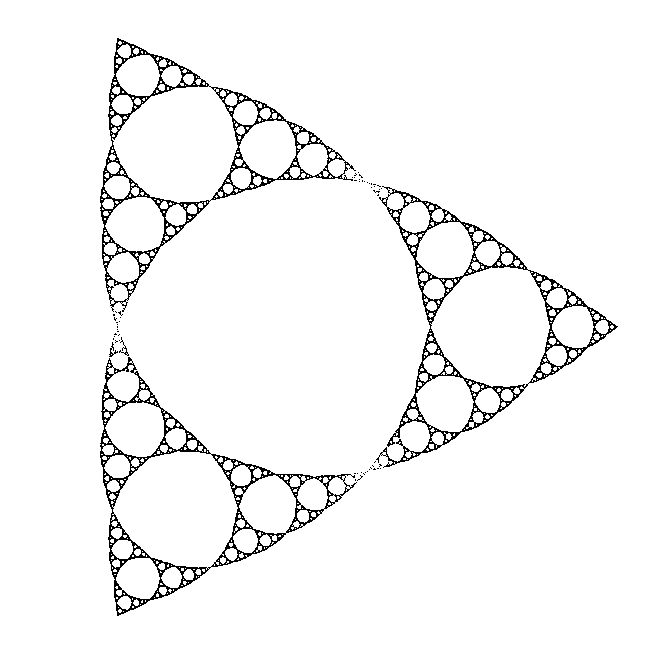}};
		\end{tikzpicture}
		\caption{The Julia set of $f(z)=z^2 -\frac{16}{27z}$. \label{fig:julia}}
\end{figure}

In \cite{DRS} the family of rational maps $f_\lambda(z)= z^2 +\lambda/z^2$, $\lambda\in \C$, is studied. The point $\infty$ lies in the Fatou set, and in fact, there exists a Jordan region $B\subset \widehat{\C}$ containing $\infty$ that lies in the Fatou set. $B$ is called the \textit{basin of attraction of $\infty$}. Furthermore, $f_\lambda$ has four finite critical points of magnitude $|\lambda|^{1/4}$ ($0$ is also a critical point but it is also a pole). One of their results is the following:
\begin{theorem}[{\cite[Theorem 3.1]{DRS}}]\label{Julia Devaney}
If all four critical points of $f_\lambda$ are strictly preperiodic and they lie in $\partial B$, then $\mathbf J (f)$ is a generalized Sierpi\'nski gasket. 
\end{theorem}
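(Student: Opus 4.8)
The plan is to deduce Theorem \ref{Julia Devaney} from the combinatorial model of Proposition \ref{Example Construction2}, using postcritical finiteness to supply the regularity and the shrinking of diameters, and reserving the real work for the combinatorics of the Fatou set. Since the four finite critical points $c_1,\dots,c_4$ have finite forward orbits by hypothesis, while $0\mapsto\infty$ with $\infty$ a superattracting fixed point, the map $f_\lambda$ is postcritically finite, hence subhyperbolic. By the standard theory this yields at once: $\mathcal J(f_\lambda)$ is connected and locally connected, $m_n(\mathcal J(f_\lambda))=0$, every Fatou component is a Jordan domain, and $f_\lambda$ is uniformly expanding on a neighbourhood of $\mathcal J(f_\lambda)$ with respect to a suitable orbifold metric. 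In particular all Fatou components are John domains with a uniform constant --- each is carried onto $B$ by an iterate with controlled degree and bounded distortion of the relevant inverse branches --- so they are uniform H\"older domains. This settles the regularity built into the notion of a generalized Sierpi\'nski gasket, and what remains is to exhibit the nested detour structure.

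Next I would analyze the Fatou set combinatorially. Since $\infty$ is the only Fatou cycle and there are no rotation domains, $\mathcal F(f_\lambda)=\bigcup_{n\ge 0}f_\lambda^{-n}(B)$. Because $f_\lambda$ has degree $4$, $\infty$ has local degree $2$, and $f_\lambda^{-1}(\infty)=\{0,\infty\}$, one finds $f_\lambda^{-1}(B)=B\cup T$ with $0\in T$ and $f_\lambda\colon T\to B$ of degree $2$ (the ``trap door''). Forward invariance of $\partial B$ forces the critical values $\pm 2\sqrt\lambda=f_\lambda(c_i)$ onto $\partial B$; and since each $c_i\in\partial B$, a local analysis of the $2$-to-$1$ branched map $f_\lambda$ near $c_i$ shows that the second local preimage of $B$ at $c_i$ cannot belong to $B$ (this would make $c_i$ an interior or self-touching point of the Jordan domain $B$), hence belongs to $T$; thus $\overline B\cap\overline T\ne\emptyset$, and a similar local argument at non-critical points shows $\overline B\cap\overline T$ is finite. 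Pulling this picture back by the inverse branches of $f_\lambda$, every Fatou component is an iterated preimage of $B$, its boundary an iterated preimage of $\partial B$, and the tangencies between components are dictated by the finite set of marked points $f_\lambda^n(\pm2\sqrt\lambda)$ on $\partial B$; in short, $\mathcal J(f_\lambda)$ carries a \emph{finite subdivision rule}.

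To match Proposition \ref{Example Construction2}, choose $K_0=\widehat{\C}\setminus(\text{Fatou components of level}\le N)$ for a level $N$ large enough that $\operatorname{int}(K_0)$ is a finite union of disjoint Jordan regions and $\widehat{\C}\setminus K_0$ a finite union of disjoint Jordan regions, and let $K_m$ be obtained from $K_{m-1}$ by deleting from each component $V$ of $\operatorname{int}(K_{m-1})$ the next Fatou component contained in $V$. Hypotheses (i)--(ii) of Proposition \ref{Example Construction2} then hold: the deleted Jordan region meets the prescribed boundary arcs because its boundary is an iterated preimage of $\partial B$, which already realizes all the tangencies, while $\operatorname{int}(K_m)$ stays finitely connected because the subdivision rule is finite. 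Finally $\sup_j\diam(\Omega_{j,m})\to 0$, the $\Omega_{j,m}$ being iterated preimages of the fixed compact set $\overline B$ under the expanding map $f_\lambda$. Hence $\mathcal J(f_\lambda)=\bigcap_m K_m$ is a detour set by Proposition \ref{Example Construction2}, and combined with the uniform H\"older property it is a generalized Sierpi\'nski gasket.

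The main obstacle is precisely the combinatorial step above: one must verify that pulling back $\partial B$, marked at the finite forward orbit of the two critical values, by the inverse branches of $f_\lambda$ produces a \emph{gasket-type} subdivision rule --- with pieces meeting in the triangular tangency pattern required by Proposition \ref{Example Construction2}(i) and with $\operatorname{int}(K_m)$ never becoming infinitely connected. This is where the specific geometry of $f_\lambda(z)=z^2+\lambda/z^2$ enters (the order-two symmetry $z\mapsto-z$, the two critical values $\pm2\sqrt\lambda$, the degree-two trap door $T$), and where one genuinely uses that \emph{all four} critical points lie on $\partial B$: if some critical orbit missed $\partial B$, the same scheme would instead yield a Sierpi\'nski-carpet-type rule, with pairwise disjoint complementary closures. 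By contrast, the expansion estimate giving $\diam(\Omega_{j,m})\to0$ and the uniform John/H\"older bounds are routine consequences of subhyperbolicity.
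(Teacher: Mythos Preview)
The paper does not prove this theorem at all: Theorem~\ref{Julia Devaney} is simply quoted from \cite{DRS} as an external result, and the paper uses it as a black box. So there is no ``paper's own proof'' to compare against. What the paper \emph{does} do, in the paragraph following the theorem, is explain how the conclusion of \cite{DRS} feeds into the machinery of this paper: the notion of \emph{generalized Sierpi\'nski gasket} from \cite{DRS} is identified as a special case of the construction in Proposition~\ref{Example Construction} (not Proposition~\ref{Example Construction2}) \emph{without} the diameter-shrinking hypothesis; the diameter shrinking is then supplied separately by Proposition~3.6 of \cite{DRS}; and Proposition~\ref{Example Construction} is invoked to conclude that $\mathcal J(f_\lambda)$ is a detour set. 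The uniform H\"older property of the Fatou components is obtained from subhyperbolicity via \cite{Mih}.

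Your proposal conflates two distinct targets. The statement to be proved is that $\mathcal J(f_\lambda)$ is a \emph{generalized Sierpi\'nski gasket} in the sense of \cite{DRS}, which is a purely combinatorial/topological notion and has nothing to do with H\"older domains; your final sentence (``combined with the uniform H\"older property it is a generalized Sierpi\'nski gasket'') is therefore a non sequitur. You are in effect sketching the paper's \emph{application} of the theorem (that $\mathcal J(f_\lambda)$ is a H\"older detour set) rather than the theorem itself. Moreover, you route the argument through Proposition~\ref{Example Construction2}, whereas both the [DRS] definition and the paper's discussion align with Proposition~\ref{Example Construction}; the tangency hypotheses (i)--(ii) of Proposition~\ref{Example Construction2} are stronger than what is needed, and you yourself flag their verification as ``the main obstacle'' without actually carrying it out. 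If your aim is to reprove the cited theorem, you would need to engage directly with the combinatorics in \cite{DRS}; if your aim is to show $\mathcal J(f_\lambda)$ is a H\"older detour set, then the paper's route --- cite \cite{DRS} for the nested structure and the diameter decay, apply Proposition~\ref{Example Construction}, and cite \cite{Mih} for the John/H\"older property --- is both correct and considerably shorter than what you outline.
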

A point $z\in \widehat{\C}$ is \textit{preperiodic} if its orbit $\{f^n(z)\}_{n\in \N}$ is a finite set. We say that $z$ is \textit{strictly preperiodic} if $z$ is periodic and $f^n(z)\neq z$ for all $n\in \N$. The notion of a \textit{generalized Sierpi\'nski gasket} is introduced in Section 2 of that paper, and it is a special case of our construction in Proposition \ref{Example Construction}, under some combinatorial assumptions, if we drop the  assumption on the diameters of the sets $V_{i,m}$. 

Assume that $f_\lambda$ is as in the theorem. Translating the setting of \cite{DRS} to our setting, we have $\Omega_0=B$, $K_0=\R^2\setminus \Omega_0$, and $K_m= f_\lambda^{-m}(K_0)$. The  components $V_{i,m}$ of $\inter(K_m)$ are bounded by so-called \textit{$m$-disks}, and it is proved in Proposition 3.6 \cite{DRS} that their diameters converge to $0$. Moreover, under the combinatorial assumption ($4'$) in \cite[p.~22]{DRS}, each boundary $\partial V_{i,m}$ is the union of at most three arcs of sets $\partial \Omega_{j,m'}$ as required in Proposition \ref{Example Construction}. Hence, by Proposition \ref{Example Construction} the Julia set $\mathbf J(f)$  is a detour set. 

In order to conclude that $\mathbf J(f)$ is $W^{1,p}$-removable for $p>2$ it suffices to have that all the Fatou components are uniform H\"older domains. The fact that the critical points of $f$ have finite orbit, or equivalently $f$ is \textit{postcritically finite}, implies that $f$ is \textit{sub-hyperbolic} \cite[Section 19]{Mil}. It is a known fact that the Fatou components of sub-hyperbolic maps are uniform \textit{John domains}, which is strictly stronger than the notion of a H\"older domain; see \cite{SS} for background on John domains, and \cite[Theorem 1]{Mih} for the fact that the Fatou components are John domains.

Another example from Complex Dynamics is given in \cite{Ka}. It is proved there that for self-similar sets $K$ with $d$ similarities such as the Sierpi\'nski gasket there exists a postcritically finite rational map $f$ of degree $d$ whose Julia set is homeomorphic to $K$. In fact there exists a global homeomorphism $h\colon S^2\to S^2$ that maps $\mathbf J(f)$ to $K$. After conjugating $f$ with a M\"obius transformation that maps $h^{-1}(\infty)$ to $\infty$ we may obtain a rational map $\tilde f$ and a homeomorphism $\tilde h \colon \R^2\to \R^2$ that maps $\mathbf J(f)$ to $K$. Remark \ref{Example Homeo invariance} implies that $\mathbf J(\tilde f) $ is a detour set. Also, $\tilde f$ is sub-hyperbolic since it is postcritically finite. It follows that the Fatou components are uniform H\"older domains, and thus $\mathbf J(\tilde f)$ is $W^{1,p}$-removable for $p>2$.

\section{Concluding Remarks}\label{Section Concluding}

As we already remarked in the Introduction, if the Sierpi\'nski gasket $K$ is non-removable for $W^{1,2}$ functions, then there exists a continuous function $f\colon \R^2\to \R$ with $f\in W^{1,2}(\R^2\setminus K)$, but $f\notin W^{1,2}(\R^2)$. By Corollary \ref{Intro Corollary}, we must have 
\begin{align*}
\int |\nabla f|^p= \infty
\end{align*}
for all $p>2$. Of course, this is not a sufficient condition for a function to lie in $W^{1,2}(\R^2\setminus K)\setminus W^{1,2}(\R^2)$. 

In the case of (quasi)conformal removability we obtain an equivalence. 
\begin{prop}\label{Quasiconformal removability}
Let $f\colon \R^2\to \R^2$ be a homeomorphism that is quasiconformal on $\R^2\setminus K$, where $K$ is the Sierpi\'nski gasket. The following are equivalent:
\begin{enumerate}[\upshape(i)]
\item $f$ is quasiconformal on $\R^2$.
\item There exists $p>2$ and and an open ball $B$ containing $K$ with 
\begin{align*}
\int_{B} \|Df\|^p <\infty,
\end{align*}
where $\|Df\|$ denotes the operator norm of the differential of $f$.
\item If $\{\Omega_k\}_{k\geq 1}$ are the bounded complementary components of $K$, then
\begin{align*}
\sum_{k\geq 1} \diam(\Omega_k) \diam(f(\partial \Omega_k))<\infty.
\end{align*}
\end{enumerate}
\end{prop}

\begin{proof}
We first show the equivalence of (i) and (ii). If (ii) holds, then by Corollary \ref{Intro Corollary} we conclude that $f\in W^{1,2}_{\loc}(\R^2)$, which implies that $f$ is quasiconformal on $\R^2$, since $m_2(K)=0$; this follows from the analytic definition of quasiconformality \cite[Definition 2.5.2, p.~24]{AIM}. Conversely, if $f$ is quasiconformal on $\R^2$, then it lies in $W^{1,p}_{\loc}(\R^2)$ for some $p>2$, from which (ii) follows; see \cite{Ge} and also \cite[Theorem 5.1.2]{AIM}.

Next, we justify the equivalence between (i) and (iii). Condition (iii) appeared in the end of the proof of Theorem \ref{Main Theorem} and it suffices to conclude that $f$ is absolutely continuous on almost every line. Hence, $f\in W^{1,2}_{\loc}(\R^2)$ and it is quasiconformal. Conversely, if $f$ is quasiconformal on $\R^2$ then it is \textit{quasisymmetric}, and thus the image of every triangle $ \Omega_k$, $k\geq 1$, contains a ball $B(x_k,r_k)$ and is contained in a ball $B(x_k,R_k)$ with $R_k/r_k \leq C$, where $C>0$ is a constant depending only on $f$; see \cite[Chapters 10--11]{He} for background on quasisymmetric maps and for proof of these claims. Hence,
\begin{align*}
\sum_{k\geq 1}\diam(\Omega_k) \diam(f (\partial \Omega_k)) &\leq \left(\sum_{k\geq 1} \diam(\Omega_k)^2 \right)^{1/2}\left(\sum_{k\geq 1} \diam(f(\partial \Omega_k))^2 \right)^{1/2}\\
&\leq C'\left(\sum_{k\geq 1} m_2(\Omega_k) \right)^{1/2} \left(\sum_{k\geq 1} m_2(B(x_k,r_k)) \right)^{1/2}
\end{align*}
for some constant $C'>0$ depending on $C$. The first sum is finite since it is the sum of the areas of the bounded complementary components of $K$. The second sum is finite, because the balls $B(x_k,r_k)$ are disjoint and are contained in a compact subset of $\R^2$.
\end{proof}


\begin{thebibliography}{}

\bibitem{AIM}
  	Astala K., Iwaniec T., Martin G.:
  	{Elliptic Partial Differential Equations and
Quasiconformal Mappings in the Plane}.
  	Princeton University Press, Princeton, NJ (2009) 

\bibitem{BC}
	Benedetto J., Czaja W.:
	{Integration and modern analysis}.
	Birkh\"auser Advanced Texts: Basler Lehrb\"ucher, Birkh\"auser,
	Boston, Inc., Boston, MA (2009)

\bibitem{Bi}
	Bishop C.:
	{Some homeomorphisms of the sphere conformal off a curve}.
	Ann.\ Acad.\ Sci.\ Fenn.\ Ser.\ A I Math.\ \textbf{19}(2),  323--338 (1994)

\bibitem{BBI}
	Burago D., Burago Y., Ivanov S.:
	{A course in metric geometry}.
	Graduate Studies in Mathematics, vol.\ 33,
	American Mathematical Society,
	Providence, RI (2001)


\bibitem{DRS}
	Devaney R., Rocha M., Siegmund S.:
	{Rational maps with generalized Sierpi\'nski gasket Julia sets}.
	Topology Appl.\ \textbf{154}(1), 11--27 (2007)

\bibitem{Ge}
	Gehring F.W.:
	{The $L^p$-integrability of the partial derivatives of a quasiconformal mapping}.
	Acta Math.\ \textbf{130}, 265--277 (1973)		

\bibitem{He}
	Heinonen J.:
	{Lectures on Analysis on Metric Spaces}.
	Springer Verlag, New York (2001)
	
\bibitem{Jo}
	Jones P.:
	{On removable sets for Sobolev spaces in the plane}.
	Essays on Fourier analysis in honor of Elias M.~Stein (Princeton, NJ, 1991), 250--276,
	Princeton Math.\ Ser., vol.\ 42, Princeton Univ.\ Press, Princeton, NJ (1995)

\bibitem{JS}
	Jones P., Smirnov S.:
	{Removability theorems for Sobolev functions and quasiconformal maps}.
	Ark.\ Mat.\ \textbf{38}(2), 263--279 (2000)

\bibitem{Ka}
	Kameyama A.:
	{Julia sets of postcritically finite rational maps and topological self-similar sets}.
	Nonlinearity \textbf{13}(1), 165--188 (2000)

\bibitem{Kau}
	Kaufman R.:
	{Fourier-Stieltjes coefficients and continuation of functions}.
	Ann.\ Acad.\ Sci.\ Fenn.\ Ser.\ A I Math.\ \textbf{9}, 27--31 (1984)

\bibitem{Ko}
	Koskela P.:
	{Removable sets for Sobolev spaces}.
	Ark.\ Mat.\ \textbf{37}(2), 291--304 (1999)

\bibitem{KRZ}
	Koskela P., Rajala T., Zhang Y.:
	{A density problem for Sobolev functions on Gromov hyperbolic domains}.
	Nonlinear Anal.\ \textbf{154}, 189--209 (2017)

\bibitem{KN}
	Koskela P., Nieminen T.:
	{Quasiconformal removability and the quasihyperbolic metric}.
	Indiana Univ.\ Math.\ J.\ \textbf{54}(1), 143--151 (2005)

\bibitem{MV}
	Martio O., V\"ais\"al\"a J.:
	{Quasihyperbolic geodesics in convex domains II}.
	Pure Appl.\ Math.\ Q.\ \textbf{7}(2),
	Special Issue: In honor of Frederick W.~Gehring, Part 2, 395--409 (2011)

\bibitem{Maz}
	Maz'ya V.:
	Sobolev Spaces with Applications to Elliptic Partial Differential Equations.
	Grundlehren der mathematischen Wissenschaften, vol.\ 324,
	Springer-Verlag Berlin Heidelberg (2011)
	
\bibitem{Mih}
	Mihalache N.:
	{Julia and John revisited}.
	Fund.\ Math.\ \textbf{215}(1), 67--86 (2011)

\bibitem{Mil}
	Milnor J.:
	{Dynamics in one complex variable}.
	Third edition, Annals of Mathematical Studies, vol.\ 160, 
	Princeton University Press, Princeton, NJ (2006)

\bibitem{New}
	Newman M.~H.~A.:
	{Elements of the topology of plane sets of points}.
	Second edition,
	Cambridge University Press, London,
	1964.

\bibitem{NtaCarpet}
	Ntalampekos, D.:
	Non-removability of Sierpi\'nski carpets.
	Preprint arXiv:1809.05605 (2018)

\bibitem{Nta}
	Ntalampekos, D.:
	Non-removability of the Sierpi\'nski gasket.
	Invent.\ Math.\ \textbf{216}(2), 519-595 (2019)

\bibitem{NtaWu}
	Ntalampekos, D., Wu J.-M.:
	Non-removability of Sierpi\'nski spaces.
	Proc. Amer. Math. Soc., to appear, \url{https://doi.org/10.1090/proc/14698} (2019) 

\bibitem{Sh}
	Sheffield S.:
	{Conformal weldings on random surfaces: SLE and the quantum gravity zipper}.
	Ann.\ Probab.\ \textbf{44}(5), 3474--3545 (2016)
	
	
\bibitem{SS}
	Smith W., Stegenga D.:
	{H\"older domains and Poincar\'e domains}.
	Trans.\ Amer.\ Math.\ Soc.\ \textbf{319}(1), 67--100 (1990)	

\bibitem{Va}
	V\"ais\"al\"a J.:
	{Lectures on $n$-dimensional quasiconformal mappings}.
	Lecture Notes in Mathematics, vol.\ 229, 
	Springer-Verlag, Berlin-New York (1971)

\bibitem{Wh}
	Whyburn G.T.:
	{Analytic Topology}.
	American Mathematical Society Colloquium Publications, vol.\ 28,
	American Mathematical Society, New York (1942)

\bibitem{Yo}
	Younsi M.:
	{On removable sets for holomorphic functions}.
	EMS Surv.\ Math.\ Sci.\ \textbf{2}(2), 219--254 (2015)
	
\end{thebibliography}
\end{document}